\journal{European Journal of Combinatorics}
\newtheorem{defn}{Definition}
\newtheorem{thm}{Theorem}[section]
\newtheorem{prop}[thm]{Proposition}
\newtheorem{clm}[thm]{Claim}
\newtheorem{subclm}[thm]{Subclaim}
\newtheorem{cor}[thm]{Corollary}
\newtheorem{lem}[thm]{Lemma}
\newtheorem{problem}[thm]{Problem}
\numberwithin{equation}{section}
\def\diam{\mathrm{diam}}
\def\dist{\mathrm{dist}}
\newcommand{\s}{\mathrm{s}}
\newcommand{\cs}{\mathrm{cs}}
\newcommand{\G}{\mathcal{G}^{\mathrm{cs}}}
\begin{document}
\begin{frontmatter}
\title{Stable structure on safe set problems in vertex-weighted graphs}
\author[yo]{Shinya Fujita}
\author[a]{Boram Park}
\author[ya]{Tadashi Sakuma}
\address[yo]{School of Data Science, Yokohama City University, Yokohama 236-0027, Japan.}
\address[ya]{Faculty of Science, Yamagata University, Yamagata 990-8560, Japan.}
\address[a]{Department of Mathematics, Ajou University, Suwon 16499, Republic of Korea.}

\begin{abstract}
Let $G$ be a graph, and let $w$ be a positive real-valued weight function on $V(G)$. For every subset $S$ of $V(G)$, let $w(S)=\sum_{v \in S} w(v).$ A non-empty subset $S \subset V(G)$ is a {\it weighted safe set} of $(G,w)$ if, for every component $C$ of the subgraph induced by $S$ and every component $D$ of $G-S$, we have $w(C) \geq w(D)$ whenever there is an edge between $C$ and $D$.
If the subgraph of $G$ induced by a weighted safe set $S$ is connected, then the set $S$ is called a {\it connected weighted safe set} of $(G,w)$. The \textit{weighted safe number} $\s(G,w)$ and \textit{connected weighted safe number} $\cs(G,w)$ of $(G,w)$ are the minimum weights $w(S)$ among all weighted safe sets and all connected weighted safe sets of $(G,w)$, respectively. Note that for every pair $(G,w)$, $\s(G,w) \le \cs(G,w)$ by their definitions. In \cite{FPS:path:cycle},
it was asked which pair $(G,w)$ satisfies the equality and shown that every weighted cycle satisfies the equality.
In this paper, we give a complete list of connected bipartite graphs $G$ such that
$\s(G,w)=\cs(G,w)$ for every weight function $w$ on $V(G)$.
\end{abstract}

\begin{keyword} Weighted safe set \sep Weighted safe number \sep
Connected weighted safe number\sep Bipartite graph
\end{keyword}
\end{frontmatter}

\section{Introduction}
We use \cite{cl} for terminology and notation not defined here.
Only finite, simple (undirected) graphs are considered. For a graph $G$, the subgraph of $G$ induced by a subset
$S \subseteq V(G)$ is denoted by $G[S]$.
We often abuse or identify terminology and notation for
subsets of the vertex set and subgraphs induced by them. In particular, a
component is sometimes treated as a subset of the vertex set.
For a subset $S$ of $V(G)$, we denote $G[V(G)\setminus S]$ by $G-S$.
For a graph $G$, when $A$ and $B$ are disjoint subsets of $V(G)$, the set of edges joining some vertex
of $A$ and some vertex of $B$ is denoted by $E_G(A,B)$.
If $E_G(A,B)\neq\emptyset$, then $A$ and $B$ are said to be \textit{adjacent}.
A (vertex) weight function $w$ on $V(G)$ means a mapping associating each vertex in $V(G)$ with a positive real number.
We call $(G,w)$ a weighted graph.
For every subset $X$ of $V(G)$, let $w(X)=\sum_{v \in X} w(v)$, and we also write $w(X)$ for $w(G[X])$.

Let $G$ be a connected graph.
A non-empty subset $S \subseteq V(G)$ is a \textit{safe set} if,
for every component $C$ of $G[S]$ and every component $D$
of $G-S$,  we have $|C|\ge |D|$ whenever $E_G(C,D)\neq\emptyset$.
If $G[S]$ is connected, then $S$ is called a \textit{connected safe set}.
In \cite{wsf},  those notions are extended to (vertex) weighted graphs.
Let  $w$  be a weight function on $V(G)$.
A non-empty subset $S \subset V(G)$ is a {\it weighted safe set} of $(G,w)$ if, for every component $C$ of $G[S]$ and every component $D$ of $G-S$, we have
$w(C) \geq w(D)$ whenever $E_G(C,D) \neq \emptyset$.
The \textit{weighted safe number} of $(G,w)$ is the minimum weight $w(S)$ among all weighted safe sets of $(G,w)$, that is,
$$\s(G,w)=\min\{ w(S) \mid S \text{ is a weighted safe set  of }(G,w)\}.$$
If $S$ is a weighted safe set of $(G,w)$ and $w(S)=\s(G,w)$, then $S$ is called a {\it minimum weighted safe set\/}.
Similar to connected safe sets, if  $S$ is a weighted safe set of $(G,w)$ and $G[S]$ is connected, then $S$ is called a \textit{connected weighted safe set} of $(G,w)$.
The \textit{connected weighted safe number} of $(G,w)$ is defined by
$$\cs(G,w)=\min\{ w(S) \mid S \text{ is a connected weighted safe set  of }(G,w)\},$$
and  a {\it minimum connected weighted safe set\/} is a connected weighted safe set $S$ of $(G,w)$ such that $w(S)=(G,w)$.
It is easy to see that for every weighted graph $(G,w)$,  $\s(G,w) \le \cs(G,w)$ by their definitions.
Throughout this paper, we often abbreviate `weighted' to a weighted safe set or a connected weighted safe set when it is clear from the context.

The notion of a safe set was originally introduced by Fujita et al. \cite{sf} as a variation of facility location problems. A lot of work has been done in this topic. For example, Kang et al. \cite{kkp} explored the safe number of the Cartesian product of two complete graphs, and Fujita and Furuya \cite{SF2018} studied the relationship between the safe number and the integrity of a graph. For a real application, the weighted version of this notion was proposed by Bapat et al. \cite{wsf}. Let $(G, w)$ be a weighted graph.
We can regard $(G, w)$ as a kind of network with certain properties. As discussed in \cite{wsf}, the concept of a safe set can be thought of a suitable measure of network majority and network vulnerability.

In view of such applications, weighted safe set problems in graphs attract much attention, especially from the algorithmic point of view. Let us briefly look back some known results.
Fujita et al. \cite{sf} showed that computing the connected safe number of $(G, w)$ when $w$ is a constant weight function is NP-hard in general. However,
when $G$ is a tree and $w$ is a constant weight function, they constructed a linear time algorithm for computing the connected safe number of $G$. \'{A}gueda et al. \cite{safe2018} gave an efficient algorithm for computing the safe number of an unweighted graph with bounded treewidth. Bapat et al. \cite{wsf} showed that computing the connected weighted safe number in a tree is NP-hard even if the underlyining tree is restricted to be a star. They also constructed an efficient algorithm computing the safe number for a weighted path. Furthermore, Fujita et al. \cite{FPS:path:cycle} constructed a linear time algorithm computing the safe number for a weighted cycle.
Ehard and Rautenbach \cite{ehard} gave a polynomial-time approximation scheme (PTAS) for the connected safe number of a weighted tree.
The parameterized complexity of safe set problems was investigated by Belmonte et al. \cite{b}.

In contrast with the above algorithmic approaches, in this paper, we are concerned with a more combinatorial aspect on weighted safe set problems.
Namely, we would like to find graphs $G$ with a stable structure such that $\s(G,w)=\cs(G,w)$ holds for any choice of the weight function $w$ on $V(G)$.
From the inequality $\s(G,w) \le \cs(G,w)$,
it would be natural to ask which pair $(G,w)$ satisfies the equality.
In this paper, we focus on a much stronger property: Namely, we would like to characterize a graph $G$ such that $\s(G,w)=\cs(G,w)$ not only for a fixed $w$, but also for any arbitrary choice of $w$. As a purely combinatorial problem, it would be interesting to investigate the structure in such special graphs.

Returning to the application aspect on safe set problems, let us recall that the notion of safe sets in graphs was invented for finding a safe place in some graph network model. If the minimum safe place has a connected structure, then
it would definitely be convenient for the refugees to communicate with each other on the safe place. Note that, in the weighted case, one can regard the weight on a vertex as the capacity of the number of people to stay there. From this point of view, we can say that a graph $G$ has a stable structure if $\s(G,w)=\cs(G,w)$ holds for any choice of the weight function $w$ on $V(G)$. For convenience, let us define $\G$ by the family of all graphs $G$ such that $\s(G,w)=\cs(G,w)$ holds for every weight function $w$ on $V(G)$.

As a related work, we found a common property in terms of the weighted safe number sometimes yields a characterization of graphs.
Indeed, Fujita et al. \cite{FPS:path:cycle} showed that a graph $G$ is a cycle or a complete graph if and only if $\s(G,w)\ge w(G)/2$ for every weight function $w$ on $V(G)$.
In the process of this work, the authors already proposed our main problem as the following open problem.

\begin{problem}[\cite{FPS:path:cycle}]\label{problem2:graphs}
Determine  the family of graphs $\G$.
\end{problem}

By definition, when we check whether a graph $G$ belongs to $\G$ or not, we must look at $(G, w)$ in all possible weights yielded by $w$, meaning that we must always deal with infinite cases of $w$. Naturally, it would be a difficult question to ask whether $G\in \G$ or not for a given graph $G$.
However, if we could have a complete answer to Problem~\ref{problem2:graphs}, then it would contribute to the real applications such as network majority and network vulnerability. This is because, the invariable property from any choice of $w$ as defined in $\G$ often plays an important role in stable networks.
We also remark that, as demonstrated in \cite{FPS:path:cycle}, some consideration on paths and cycles in view of $\G$ provides a nice  observation on a problem in combinatorial number theory to find some special partitions of number sequences (see  \cite{FPS:path:cycle} for details). Thus, our problem is important in both theoretical and practical directions.

Unfortunately we could not give the complete answer to Problem~\ref{problem2:graphs}. Yet we achieved a substantial progress on this problem. To state this, we start with the following observation on $\G$.

It is clear that a complete graph is in $\G$. In \cite{FPS:path:cycle}, it was shown that a graph $G$ with $\Delta(G)=|V(G)|-1$ belongs to $\G$  and the following theorem was obtained.

\begin{thm}[\cite{FPS:path:cycle}]\label{thm:cycle}
A cycle belongs to $\G$.
\end{thm}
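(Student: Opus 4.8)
The plan is to reduce Theorem~\ref{thm:cycle} to a quantitative statement about arcs (connected vertex-subsets) of the cycle. Write $C=C_n$ and $T=w(C_n)$. If $R$ is a proper arc of $C$, then $C-R$ is a single arc, so $R$ is a connected weighted safe set exactly when $w(R)\ge w(C-R)=T-w(R)$, i.e. when $w(R)\ge T/2$; hence $\cs(C,w)=\min\{w(R):R\text{ an arc of }C,\ w(R)\ge T/2\}$. Moreover, for any weighted safe set $S$, list the components $P_1,\dots,P_k$ of $C[S]$ and the components $Q_1,\dots,Q_k$ of $C-S$ in cyclic order $P_1,Q_1,P_2,Q_2,\dots,P_k,Q_k$; each $P_i$ is adjacent to $Q_{i-1}$ and to $Q_i$, so $w(P_i)\ge w(Q_{i-1})$ and $w(P_i)\ge w(Q_i)$, and summing the second inequality over $i$ gives $w(S)=\sum_i w(P_i)\ge\sum_i w(Q_i)=T-w(S)$, i.e. $w(S)\ge T/2$. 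It therefore suffices to prove: \emph{for every weighted safe set $S$ of $(C,w)$ there is an arc $R$ of $C$ with $T/2\le w(R)\le w(S)$.} Indeed such an $R$ is then a connected weighted safe set, and applying this to a minimum weighted safe set $S$ gives $\cs(C,w)\le w(R)\le w(S)=\s(C,w)$, which together with the trivial inequality $\s(C,w)\le\cs(C,w)$ yields the theorem.

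Keeping the notation $P_1,Q_1,\dots,P_k,Q_k$, first dispose of the easy cases: if $k=1$ then $S$ itself is an arc with $w(S)\ge T/2$, so $R=S$ works; and if some block $P_i$ or $Q_i$ has weight at least $T/2$, then, since $w(P_i)\le w(S)$ and $w(Q_i)\le w(P_{i+1})\le w(S)$, this block is an arc of weight in $[T/2,w(S)]$ and we are done. So we may assume $k\ge2$ and $w(B)<T/2$ for every block $B$. The remaining tool is a sliding-window argument: orient $C$ and, for each vertex $v$, let $A_v$ be the shortest arc starting at $v$ with $w(A_v)\ge T/2$; then $\cs(C,w)=\min_v w(A_v)$, and since $A_v$ stops as soon as its weight reaches $T/2$, the ``overshoot'' $w(A_v)-T/2$ is strictly less than the weight of the last vertex of $A_v$, hence less than $T/2$. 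The inequalities $w(P_i)\ge w(Q_{i-1})$ and $w(P_i)\ge w(Q_i)$ are exactly what control how much an occurrence of a gap $Q_i$ can make a window overshoot, and they are used to rule out that $w(A_v)>w(S)$ for every $v$.

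This scheme is cleanest when $k=2$: the four arcs formed from two cyclically consecutive blocks, namely $P_1\cup Q_1$, $Q_1\cup P_2$, $P_2\cup Q_2$ and $Q_2\cup P_1$, have weights summing to $2\big(w(P_1)+w(Q_1)+w(P_2)+w(Q_2)\big)=2T$, so at least one of them has weight at least $T/2$; on the other hand each of them has weight at most $w(S)=w(P_1)+w(P_2)$, because $w(Q_1)\le w(P_2)$, $w(Q_1)\le w(P_1)$, $w(Q_2)\le w(P_1)$ and $w(Q_2)\le w(P_2)$, respectively. Hence one of these four arcs is the desired $R$. For $k\ge3$ no such fixed finite family of short arcs suffices, and one must genuinely sweep the minimal half-weight arc $A_v$ around $C$ and bound its overshoot uniformly in $v$; the hard part will be this bookkeeping, i.e. showing that the domination inequalities at the $k$ gaps force some $A_v$ to have weight at most $w(S)$. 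I expect that step — equivalently, ruling out a minimum weighted safe set with $k\ge2$ components all of whose half-covering arcs are heavier than $w(S)$ — to be the main obstacle.
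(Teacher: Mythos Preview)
The paper does not prove Theorem~\ref{thm:cycle}; it is quoted from~\cite{FPS:path:cycle} and used as a black box. So there is no ``paper's own proof'' to compare your attempt against.

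Your setup is correct: on a cycle, a proper arc $R$ is a connected safe set iff $w(R)\ge T/2$; any safe set $S$ with $k$ components $P_1,\dots,P_k$ satisfies $w(P_i)\ge w(Q_{i-1})$ and $w(P_i)\ge w(Q_i)$ and hence $w(S)\ge T/2$; and the task reduces to exhibiting an arc $R$ with $T/2\le w(R)\le w(S)$. Your $k=1$ and $k=2$ arguments are also correct.

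The genuine gap is the $k\ge3$ case, and you say so yourself. As written this is not a proof but a plan with its hardest step missing. Two remarks on that step. First, the pigeonhole trick you used for $k=2$ does \emph{not} extend naively: for $k=3$, arcs of three consecutive blocks have average weight $T/2$, but an individual one such as $P_1\cup Q_1\cup P_2$ need not satisfy $w(P_1\cup Q_1\cup P_2)\le w(S)$, since $w(Q_1)\le w(P_3)$ is not among your hypotheses (take $w(P_1)=w(P_2)=w(Q_1)=10$, $w(P_3)=w(Q_2)=w(Q_3)=1$). So some genuinely different bookkeeping is needed, not just a longer list of block-unions. Second, note that the paper's own Lemma~\ref{lem:intro:beta} gives you a free reduction: if $C_n\notin\G$ and $S$ is a minimum safe set with $k$ components, then $\beta(C_n,S)=C_{2k}\notin\G$; by induction on $n$ this forces $2k=n$, i.e.\ every block is a single vertex. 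So you may assume from the start that $S$ is an alternating set of vertices, which simplifies the picture but still leaves the core combinatorial inequality to be proved. Until you supply that argument (or cite~\cite{FPS:path:cycle}), the proof is incomplete.
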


In this paper, we completely characterize all chordal graphs and all bipartite graphs in $\G$.
A \textit{dominating clique} is a dominating set which is a clique, that is, it induces a complete graph and every vertex $v$ not in the clique has a neighbor in the clique.
\begin{thm}\label{thm:chordal}
Let $G$ be a connected chordal graph. The following are equivalent:
\begin{itemize}
\item[\rm(i)] $G$ has a dominating clique;
\item[\rm(ii)] $\diam(G)\le 3$;
\item[\rm(iii)] $G\in \G$.
\end{itemize}
\end{thm}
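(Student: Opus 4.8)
The plan is to prove the four implications (i)$\Rightarrow$(ii), (ii)$\Rightarrow$(i), (i)$\Rightarrow$(iii) and (iii)$\Rightarrow$(ii), which together yield the three equivalences. The implication (i)$\Rightarrow$(ii) needs no chordality: given a dominating clique $Q$ and $x,y\in V(G)$, choose $x'\in Q$ equal or adjacent to $x$ and $y'\in Q$ equal or adjacent to $y$; since $Q$ is a clique, $x'=y'$ or $x'y'\in E(G)$, so $\dist(x,y)\le 3$. For (ii)$\Rightarrow$(i) I would invoke the known fact that a connected chordal graph of diameter at most $3$ has a dominating clique; if a self-contained proof is wanted, it follows from a clique-tree analysis (or by taking a clique $Q$ with $|N[Q]|$ maximum and deriving a contradiction from $N[Q]\neq V(G)$ via a perfect elimination ordering).

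The core is (i)$\Rightarrow$(iii). Fix a dominating clique $Q$, a weight $w$, and, among all minimum weighted safe sets of $(G,w)$, one — call it $S$ — with the fewest components; since $\s(G,w)\le\cs(G,w)$ it suffices to show $G[S]$ is connected. Suppose not, and let $C_1,\dots,C_k$ ($k\ge 2$) be the components of $G[S]$ and $D_1,\dots,D_m$ those of $G-S$. If $Q\subseteq S$, then ($Q$ lying in one component $C_1$ of $G[S]$ and every vertex off $Q$ being adjacent to $Q\subseteq C_1$) no component $C_i$ with $i\ge 2$ can exist, a contradiction; so $Q\setminus S\neq\emptyset$. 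Being a clique, $Q\setminus S$ lies in one component $D^*$ of $G-S$; a short argument — a vertex of $C_i$ has a $Q$-neighbour, which can lie neither in $C_i$ nor in another $C_{i'}$, hence in $Q\setminus S\subseteq D^*$ — shows every $C_i$ is adjacent to $D^*$, and symmetrically, when $Q\cap S\neq\emptyset$, every $D_j$ is adjacent to the component $C^*$ of $G[S]$ containing $Q\cap S$. Safeness of $S$ then gives $w(C_i)\ge w(D^*)$ for all $i$, and $w(C^*)\ge w(D_j)$ for all $j$ when $Q\cap S\neq\emptyset$. If $Q\cap S=\emptyset$ one checks that $G-S=D^*$ and that $C\cup D^*$, with $C$ a heaviest component of $G[S]$, is a connected safe set of weight $\le w(S)$, contradicting the choice of $S$. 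In the remaining straddling case, chordality enters decisively: an edge $ab$ with $a\in C_i$, $b\in D_j$ ($i,j\ge 2$), together with a $Q$-neighbour $q\in Q\cap S$ of $b$ and a $Q$-neighbour $q'\in Q\setminus S$ of $a$, yields a $4$-cycle $a\,b\,q\,q'$ both of whose chords $aq$ and $bq'$ are forbidden — impossible; hence no such edge exists, so every component of $G-(C^*\cup D^*)$ is a single $C_i$ or a single $D_j$. From here one produces a connected safe set of weight $\le w(S)$ — a suitable choice among $C^*\cup D^*$, sets of the form $C^*$ enlarged by some of the $D_j$'s to ``balance'' $C^*{\cup}D^*$ against the heavy side, and $C^*\cup D^*\cup C'$ with $C'$ heaviest among the non-$C^*$ components — using the weight inequalities above together with the minimality of $S$ to dispose of the degenerate small cases (for instance $k=2$ with the second component very heavy, where that component alone would already be a lighter safe set). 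This straddling case is the step I expect to be the main obstacle: committing to one set that is at once connected, of weight at most $w(S)$, and safe. The structural facts above — especially the chordal ``no $C_i$--$D_j$ edge'' fact, which severely restricts how the complement $G-(C^*\cup D^*)$ can reconnect — are what should make it tractable.

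Finally, (iii)$\Rightarrow$(ii), by contraposition: if $G$ is chordal with $\diam(G)\ge 4$, I exhibit $w$ with $\s(G,w)<\cs(G,w)$. Pick $u,v$ with $\dist(u,v)=4$; a shortest $u$--$v$ path $u\,x_1\,x_2\,x_3\,v$ is induced, hence an induced $P_5$. The prototype is $G=P_5$ with weights $(1,1,t,1,1)$, $0<t<1$: then $\{x_1,x_3\}$ is a safe set of weight $2$, whereas every connected safe set has weight $\ge 2+t$ (each of $x_1,x_3$ need only outweigh a lighter neighbour, but a connected safe set containing both must also contain $x_2$, and neither end may be discarded). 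To lift this to a general chordal $G$ I would keep $w(x_1)=w(x_3)=1$, put a negligible total weight on all remaining vertices, and tune $w(x_2)$ and the weights near $u$ and $v$ so that $G-\{x_1,x_3\}$ splits into pieces each of weight at most $1$ — making $\{x_1,x_3\}$ safe — while no connected set of weight at most $2$ is safe; chordality (minimal separators are cliques, so that removing $\{x_1,x_3\}$ truly fragments the graph along the diametral path) is what prevents a cheap connected safe set. Verifying this non-existence claim in full generality is the principal difficulty of this direction.
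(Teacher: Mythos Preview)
Your chordality observation for (i)$\Rightarrow$(iii) --- that there can be no edge between a non-$C^*$ component of $G[S]$ and a non-$D^*$ component of $G-S$ --- is exactly the paper's key step. Where you remain vague (``a suitable choice among \ldots''), the paper is clean because it first isolates a general lemma valid for \emph{any} graph with a dominating clique $K$: if $\s(G,w)<\cs(G,w)$ and $S$ is a minimum safe set, then every component of $G[S]$ is adjacent to at least two components of $G-S$. The proof is a concrete swap-and-absorb: if some $C_i$ touches only $D^*$, set $S'=(S\setminus C_i)\cup D^*$ (connected since $K\subseteq S'$, and $w(S')\le w(S)$); if $S'$ already dominates $C_i$ you are done, otherwise keep adjoining further $D_j$'s until it does, using $w(D_j)\le w(C^*)$ at each step. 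With this lemma in hand, your chordality observation yields an immediate contradiction --- no ad hoc balancing is needed.

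Your plan for (iii)$\Rightarrow$(ii) has a genuine gap. The claim that ``removing $\{x_1,x_3\}$ truly fragments the graph'' is false even in chordal graphs: take $P_5=v_0v_1v_2v_3v_4$ and add $w_1$ adjacent to $v_0,v_1,v_2$ and $w_3$ adjacent to $v_2,v_3,v_4$. This graph is chordal with diameter $4$, yet for \emph{every} induced $P_5$ the two interior vertices fail to separate it; and with your weights ($w(x_1)=w(x_3)=1$, the rest negligible) the set $\{v_1,w_1,v_2\}$ is a connected safe set of weight $1+2\epsilon<2$, so your intended inequality $\s<\cs$ does not follow. The paper bypasses this entirely by invoking a structural theorem of Heggernes, van~'t~Hof, L\'ev\^eque and Paul: every connected chordal graph can be contracted to a path of length $\diam(G)$ with all bags connected. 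Combined with a contraction lemma --- any graph that contracts to $P_5$ with the second and fourth bags connected lies outside $\G$, via an explicit weight concentrated on one vertex per bag --- this gives (iii)$\Rightarrow$(ii) at once. What you are missing is not a tuning of weights on an induced $P_5$, but this layered decomposition of the whole graph.
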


In addition, we show that a triangle-free graph in $\G$ has small diameter.

\begin{thm}\label{main:thm:triangle-free}
If $G$ is a triangle-free connected graph in $\G$ which is not a cycle, then $\mathrm{diam}(G)\le 3$.
\end{thm}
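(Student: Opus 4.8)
The plan is to prove the contrapositive: if $G$ is triangle-free, connected, not a cycle, and $\diam(G)\ge 4$, then $G\notin\G$; equivalently, I will exhibit a weight $w$ with $\s(G,w)<\cs(G,w)$. Since $\diam(G)\ge4$, fix $u,v$ at distance $4$ and a shortest $u$--$v$ path $u=x_0,x_1,x_2,x_3,x_4=v$, and set $A=N[u]$, $B=N[v]$, $C=V(G)\setminus(A\cup B)$. I would first record what triangle-freeness and $\dist(u,v)=4$ give: $N(u)$ and $N(v)$ are independent with no edge between them; $A\cap B=\emptyset$ and $E_G(A,B)=\emptyset$; $u,v$ are isolated in $G-(N(u)\cup N(v))$; and $x_2\in C$, so $C\neq\emptyset$. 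Thus, near the two ends, $G$ resembles the path that witnesses $P_5\notin\G$.

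If $\Delta(G)\le 2$ then $G$ is a path $P_n=v_1\cdots v_n$ with $n\ge5$, handled by the explicit weight $w(v_1)=w(v_2)=w(v_{n-1})=w(v_n)=3$ and $w(v_i)=\tfrac1{\,n-4\,}$ for $3\le i\le n-2$: one checks that $\{v_2,v_{n-1}\}$ is a safe set of weight $6$, while a connected safe set is an interval that either avoids $v_2$ and $v_{n-1}$ (hence lies in $\{v_1\}$, $\{v_n\}$, or $\{v_3,\dots,v_{n-2}\}$ and cannot dominate the heavy block on the opposite side) or contains one of them and, to be balanced, must reach $v_{\lceil n/2\rceil}$, giving weight $6+\tfrac{\lceil n/2\rceil-2}{n-4}>6$. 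So $\s(P_n,w)\le6<\cs(P_n,w)$.

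For $\Delta(G)\ge3$ I would choose a ``heavy core'' $K$ --- between $N(u)\cup N(v)$ and $A\cup B$, perhaps pushed slightly into $C$ --- carrying essentially all the weight, the rest negligibly light, tuned so that (i) a fixed independent set $S_0\subseteq K$ near the two ends (the analogue of $\{v_2,v_{n-1}\}$) is a safe set whose complement splits into components no heavier than one vertex of $S_0$, whence $\s(G,w)\le w(S_0)$; and (ii) every connected safe set $T$ has $w(T)>w(S_0)$. For (ii) I would split on how $T$ meets $A,B$: if $T\cap B=\emptyset$ (symmetrically $A$), then $B$ lies in a single component $D$ of $G-T$ adjacent to $T$, so $w(T)\ge w(D)\ge w(B)$, which the tuning should make exceed $w(S_0)$; if $T$ meets both $A$ and $B$, then $T$ contains an $A$--$B$ path, which traverses $C$, and triangle-freeness together with the domination constraints forced on $T$ by the heavy core vertices it omits should push $w(T)$ past $w(S_0)$.

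I expect step (ii) to be the real obstacle, together with choosing $(u,v)$ and $K$ so that both sub-cases succeed at once. Naive choices can fail --- with $K=A\cup B$ uniformly weighted, $N[u]$ itself may be a connected safe set lighter than $S_0$, or the component of $G-N[u]$ through $v$ may not be adjacent to $N[u]$ --- so I anticipate one must pick $(u,v)$ extremally among distance-$4$ pairs (and $K$ accordingly), use the non-adjacency of the ends and triangle-freeness to control the components of $G-N[u]$ and $G-N[v]$, and then check a handful of inequalities among the weight parameters. The path computation is the template for what the extremal weighting and the winning disconnected safe set should look like.
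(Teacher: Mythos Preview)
Your path case is fine, but the $\Delta(G)\ge 3$ case is not a proof: it is a hope that some weighting ``should'' work, with the hard step (ii) left entirely open. You yourself flag that naive choices of the core $K$ fail (e.g.\ $N[u]$ can be a connected safe set lighter than your $S_0$), and nothing in the proposal explains how to rule this out in general. The difficulty is real: the local picture near $u$ and $v$ does not by itself control what happens in $C$, and a single hand-built weight is unlikely to survive all the ways a connected $T$ can thread through $A$, $C$, and $B$. Concretely, your case ``$T\cap B=\emptyset\Rightarrow w(T)\ge w(B)$'' already fails in the form stated, since $B$ need not lie in a single component of $G-T$ once $T$ bites into $N(v)$'s common neighbours; and in the case ``$T$ meets both $A$ and $B$'', the assertion that triangle-freeness plus domination forces $w(T)>w(S_0)$ is exactly the content of the theorem and is not argued.

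The paper avoids building one global weight by first proving three reusable contraction lemmas: if $G$ admits a partition $V_1,\dots,V_5$ contracting to $H_1=P_5$, to $H_2=C_4$ with a pendant (with the extra hypothesis $|E(V_1,V_2)|=|E(V_2,V_3)|=1$), or to $H_3=K_{2,3}$ (with mild connectivity hypotheses on the bags), then $G\notin\G$; each lemma comes with its own explicit weight and a short case check. The proof of the theorem then becomes a structural case analysis designed to manufacture such a partition. One takes $u,v$ at distance $\diam(G)$ (not merely $4$; maximality is used to force every neighbour of $u,v$ to have degree $\ge 2$ and to control the components of $G-N[u]$), studies whether $H_u:=G-N[u]$ and $H_v:=G-N[v]$ are connected, and in each sub-case exhibits bags $V_1,\dots,V_5$ meeting the hypotheses of one of the three lemmas. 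The extremal choice of $(u,v)$ you anticipated is indeed needed, but it is a second, finer choice (minimise $|N(u;v)|+|N(v;u)|$, then $\deg u+\deg v$) that makes the disconnected-$H_a$ case tractable by forcing all ``stray'' components of $G-N[a]$ to be singletons with $N(\cdot)=N(a)$. In short, the missing idea in your proposal is to factor the argument through contractions to a finite list of small witnesses, rather than to engineer a single bespoke weight for every triangle-free $G$.
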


The following,  the main result of the paper, gives the complete list of the connected bipartite graphs in $\G$. A \textit{double star} is a tree with diameter at most three.

\begin{defn}\label{def:graph:D:D*}
Let $m$, $n$, $p$, $q$ be nonnegative integers.
Let $D({m,n};p,q)$ (resp. $D^*({m,n};p,q)$) be a connected bipartite graph with bipartition $(X_1\cup X_2\cup P, Y_1\cup Y_2\cup Q)$, where the unions are disjoint,
satisfying  {\rm(1)}-{\rm(4)}:
\begin{itemize}
\item[\rm(1)] $|X_1|=m$,  $|Y_1|=m+1$, $|X_2|=n+1$, $|Y_2|=n$, $|P|=p$, and $|Q|=q$;
\item[\rm(2)] Both $G[X_1\cup Y_1]$ and  $G[X_2\cup Y_2]$ are  complete bipartite graphs;
\item[\rm(3)] The vertices in $P$ are pendant vertices which are adjacent to a vertex $y\in Y_1$
and the vertices in $Q$ are pendant vertices which are adjacent to a vertex $x\in X_2$;
\item[\rm(4)] $E_G(X_1,Y_2)=\emptyset$ and $G[X_2\cup Y_1]$ is a complete bipartite graph (resp. a double star with a dominating edge $xy$).
\end{itemize}
Note that each of $D({m,n};p,q)$ and $D^*({m,n};p,q)$ has a dominating edge $xy$ ($x\in X_2$ and  $y\in Y_1$), where a \textit{dominating edge} is a dominating clique of order two.
See Figure~\ref{fig:graphs} for examples.
\end{defn}
\begin{figure}[h!]
\centering
\includegraphics[width=12cm,page=1]{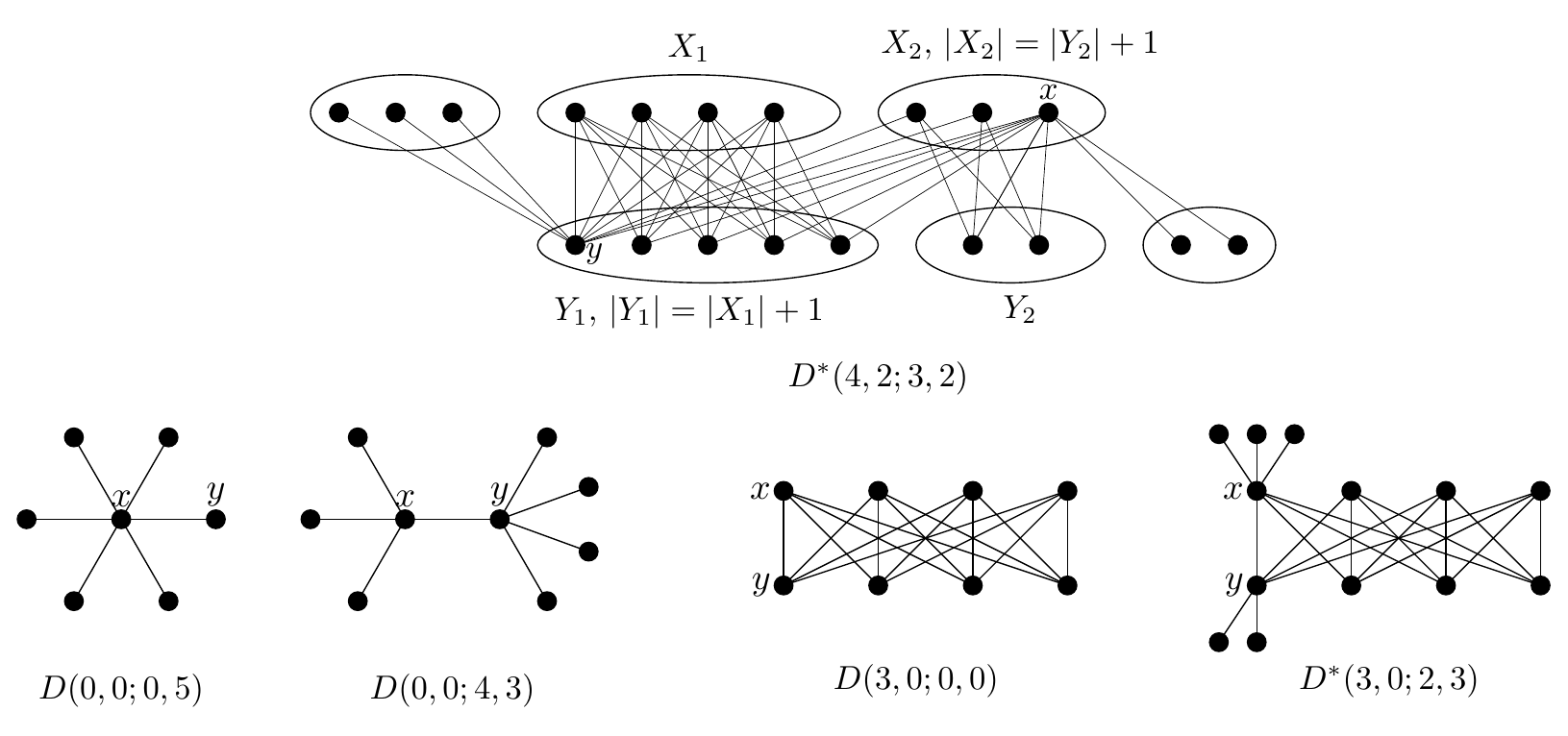}\\
\caption{Examples of graphs $D(m,n;p,q)$ or $D^*(m,n;p,q)$.}\label{fig:graphs}
\end{figure}

The $m$-\textit{book graph}, denoted by $B_m$, is  the Cartesian product of a star $K_{1,m}$ and a path $P_2$. See Figure~\ref{fig:Book}.
The following is our main theorem, which gives a full list of graphs in $\G$ for the bipartite case.

\begin{figure}[h!]
\centering
\includegraphics[width=6cm,page=2]{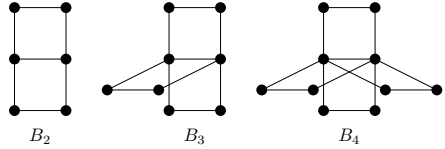}\\
\caption{Examples of book graphs.}\label{fig:Book}
\end{figure}

\begin{thm}[Main Theorem]\label{main:thm:bipartite:CS}
A connected bipartite graph $G$ belongs to $\G$ if and only if $G$ is one of the following:

\begin{itemize}
\item[\rm(I)] an even cycle $C_{2n}$ with $n\ge 2$;
\item[\rm(II)] a double star;
\item[\rm(III)] a book graph $B_n$ with $n\ge 1$;
\item[\rm(IV)] a graph obtained from $K_{3,3}$ by deleting an edge;
\item[\rm(V)]$D(m,n;p,q)$ or $D^*(m,n;p,q)$, with $m\ge 2$, $n\neq 1$ and $p,q\ge 0$,
\end{itemize}
\end{thm}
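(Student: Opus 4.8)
The plan is to prove the two implications of the characterization separately, organising each around a \emph{dominating edge} of $G$ (and, for cycles, around Theorem~\ref{thm:cycle}).

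\emph{Sufficiency.} An even cycle lies in $\G$ by Theorem~\ref{thm:cycle}, which disposes of~(I). For each of the families (II)--(V) I would fix a member $G$ and an arbitrary weight $w$, take a minimum weighted safe set $S$, and, when $G[S]$ is disconnected, produce a connected weighted safe set $S'$ with $w(S')\le w(S)$; together with $\s(G,w)\le\cs(G,w)$ this yields $\s(G,w)=\cs(G,w)$. The common device is that every graph in (II)--(V) has a dominating edge $xy$, and deleting $x$ and $y$ leaves the bipartite graph between the two ``sides'' of the bipartition together with pendant stars at $x$ and at $y$; this near-tree shape forces every component of $G[S]$ missing $\{x,y\}$ to lie within one side and to be dominated in $G-S$ by $N(x)$ or $N(y)$, and a short weight calculation then lets me absorb such a component into the $S$-component containing $x$ or $y$ (or shift weight between the two sides) without increasing $w(S)$. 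For double stars this is elementary, for $B_n$ and for $K_{3,3}-e$ it is a finite check on the shapes of $S$, and the families $D(m,n;p,q)$, $D^*(m,n;p,q)$ require tracking the weights $w(X_1),w(Y_1),w(X_2),w(Y_2),w(P),w(Q)$ to show that the optimum may always be taken to contain $xy$.

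\emph{Necessity.} Let $G$ be a connected bipartite graph in $\G$. As $G$ is triangle-free, Theorem~\ref{main:thm:triangle-free} says that either $G$ is a cycle, hence an even cycle giving~(I), or $\diam(G)\le 3$; assume the latter and that $G$ is not an even cycle. I would first prove that $G$ has a dominating edge: if not (and $G$ is not a star, so has no dominating vertex either) then, using $\diam(G)\le 3$, $G$ contains one of a short list of induced configurations, and for each one I exhibit a weight concentrated on a few of its vertices for which some disconnected safe set is strictly lighter than every connected safe set, contradicting $G\in\G$. Given a dominating edge $xy$, write $V(G)=\{x,y\}\cup A\cup B$ with $A=N(y)\setminus\{x\}$ and $B=N(x)\setminus\{y\}$, so that $G$ is determined by the bipartite graph $H$ on $A\cup B$ (with $x$ complete to $B\cup\{y\}$ and $y$ complete to $A\cup\{x\}$). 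The heart of the argument is then a case analysis on $H$: unless $H$ consists of a ``nested'' pair of complete bipartite blocks together with isolated vertices — which, reassembled with $x$, $y$ and the isolated vertices playing the roles of the pendant sets $P$, $Q$, is precisely one of the shapes in Definition~\ref{def:graph:D:D*} (or a book graph, or $K_{3,3}-e$, or a double star) — I locate a small induced subgraph of $G$ carrying a weight for which a disconnected safe set again beats $\cs(G,w)$. The recurring gadget is to choose an independent pair (or triple) dominating a small heavy ``pocket'' of $G$, put large weight on the pair and the pocket and negligible weight elsewhere, making the pair a cheap safe set while every connected safe set must swallow the whole pocket.

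\emph{Main obstacle.} The delicate part is the necessity-direction classification: assembling a family of forbidden weighted configurations that is exhaustive, and then showing that a diameter-$\le 3$ bipartite graph with a dominating edge and avoiding all of them is exactly one of (II)--(V); the bookkeeping at the boundary between the $D$ and $D^*$ variants (complete bipartite versus double star for $G[X_2\cup Y_1]$) and at the sporadic cases $B_n$ and $K_{3,3}-e$ is where the real effort goes. The secondary difficulty is the sufficiency direction for the $D/D^*$ family, where the optimal safe set is genuinely not always one obvious set and the weight comparison splits into several regimes.
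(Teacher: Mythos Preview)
Your sufficiency direction is broadly the paper's: cycles via Theorem~\ref{thm:cycle}, and for (II)--(V) exploit the dominating edge to upgrade a disconnected minimum safe set to a connected one of no greater weight. The paper packages this as Corollary~\ref{cor:tree}, Proposition~\ref{prop:book:graph}, and Proposition~\ref{prop:case:(5)}. One correction: for the $D/D^*$ families the paper does \emph{not} show that the optimum can always be taken to contain $xy$; Proposition~\ref{prop:case:(5)} is instead a minimal-counterexample argument (first minimise $|V(G')|$, then $|V(G)|$), reducing via Lemma~\ref{lem:intro:beta} to the case $\beta(G,S)=G$, i.e.\ $S=X$ or $S=Y$, and only then doing a single vertex-swap to build a connected safe set. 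Your ``track the six block-weights'' plan could work, but the paper's reduction is what tames the case explosion.

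Your necessity direction is a genuinely different route. You propose to (i) first prove directly that $G$ has a dominating edge $xy$, and then (ii) classify the bipartite graph $H$ on $(N(y)\setminus\{x\})\cup(N(x)\setminus\{y\})$, manufacturing an ad hoc weight for each forbidden shape of $H$. The paper does neither of these. Instead it builds, in Section~\ref{sec:contraction}, a small library of \emph{contraction lemmas} (Lemmas~\ref{prop:contractible:path}--\ref{prop:contractible:Kmn} and Corollary~\ref{cor:PropThree:closed_neighborhood}): any graph contractible, with the right bags connected, to $P_5$, to the two $K_{2,3}$-subgraphs $H_2,H_3$, or to $K_{m,n}$ with $m\neq n$, is not in $\G$. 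The necessity proof is then a minimal-counterexample argument on $|V(G)|$: after Theorem~\ref{main:thm:triangle-free} and the easy diameter-$\le 2$ cases, one picks a vertex $x^*$ of degree $\ge 3$ chosen to optimise the component structure of $G-N_G[x^*]$, and repeatedly invokes the contraction lemmas (together with Lemma~\ref{lem:diam3_bipartite}) to force $G$ into one of the listed shapes. The dominating edge is never isolated as a lemma; it falls out at the end. The payoff of the paper's approach is that all the weight constructions are done once, uniformly, in Section~\ref{sec:contraction} and then reused; your approach would need a fresh gadget at every branch of the $H$-classification, and the step ``$G$ contains one of a short list of induced configurations'' witnessing the absence of a dominating edge is where your plan is thinnest.
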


From our main theorem, we see that if a bipartite graph $G$ belongs to $\G$, then $G$ is an even cycle or $G$ has a dominating edge.
When considering a safe set $S$ of a graph $G$, note that we always observe the bipartite structure between $G[S]$ and $G-S$.
From this view point, we believe that our main theorem settles an essential case of Problem~\ref{problem2:graphs}, which is very far from trivial to prove.

In fact we prepare a companion paper \cite{isaac} in which we show that, for any graph $G$ in the list of Theorem~\ref{main:thm:bipartite:CS} and
for any non-negative weight function $w$ of $G$, there exists a fully polynomial-time approximation scheme (FPTAS) for computing a minimum connected
safe set of $(G,w)$, and moreover, we give a linear time algorithm to decide whether a graph is in the list of Theorem~\ref{main:thm:bipartite:CS} or not.
As byproduct of the above results, it is also shown in \cite{isaac} that there exists an FPTAS for computing a minimum connected safe set of a weighted tree.
This made a substantial progress on the relevant work due to Ehard and Rautenbach \cite{ehard}.

This paper is organized as follows. Section~\ref{sec:Prel} gives preliminaries.
Section~\ref{sec:contraction} provides some lemmas concerning the graphs not in $\G$ in view of a contraction argument, which are useful to prove our main results in the subsequent  sections.
Section~\ref{sec:CS:candidates} finds some graphs in $\G$ with a dominating clique, especially focusing on chordal graphs and bipartite graphs.
This section also provides the proof of Theorem~\ref{thm:chordal}. Finally, Section~\ref{sec:main} provides the proofs of our main results, Theorems~\ref{main:thm:triangle-free} and~\ref{main:thm:bipartite:CS}.

\section{Preliminaries}\label{sec:Prel}

For a connected graph $G$ and $S\subset V(G)$, we denote  by $\beta(G,S)$  the graph whose vertices are the components of $G[S]$ and of $G-S$, and two vertices of $A$ and $B$ are adjacent in $\beta(G,S)$ if and only if $E_G(A,B)\neq \emptyset$ (Figure~\ref{fig:beta}). Note that $\beta(G,S)$ is always a bipartite graph.

\begin{figure}[h!]
\centering
\includegraphics[width=11cm,page=3]{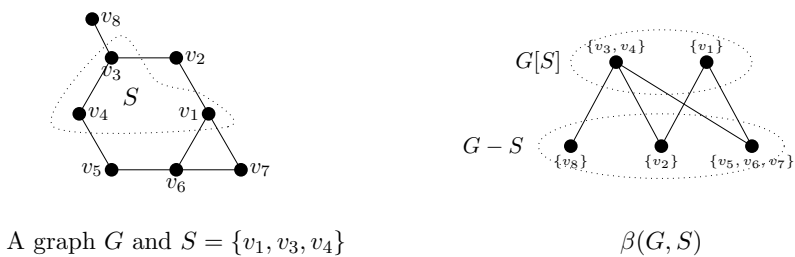}\\
        \caption{An example of $\beta(G,S)$.}\label{fig:beta}
\end{figure}

\begin{lem}\label{lem:intro:beta}
Let $G$ be a connected graph not in $\G$.
If $S$ is a minimum safe set of $(G,w)$ for some weight function $w$ on $V(G)$ such that $\s(G,w)<\cs(G,w)$,
then $\beta(G,S)\not\in \G$.
\end{lem}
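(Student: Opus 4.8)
The goal is to show that if $G \notin \G$, witnessed by a weight function $w$ with $\s(G,w) < \cs(G,w)$ and a minimum safe set $S$, then the "quotient" bipartite graph $H := \beta(G,S)$ is itself not in $\G$. The natural strategy is to transfer the separation $\s < \cs$ from $(G,w)$ down to $(H, \bar w)$ for a suitably induced weight function $\bar w$ on $V(H)$. The plan is to define $\bar w$ on $V(H)$ by setting $\bar w(A) = w(A)$ for each component $A$ of $G[S]$ or of $G-S$ (these components are exactly the vertices of $H$), so that weights of vertex sets are preserved under the correspondence "union of components $\leftrightarrow$ set of vertices of $H$."

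First I would record the key dictionary between safe sets of $(H,\bar w)$ and certain safe sets of $(G,w)$. If $T \subseteq V(H)$ is a safe set of $(H,\bar w)$, then $T$ is a union of components, and I claim the union $\widehat T \subseteq V(G)$ of the corresponding components of $G$ is a safe set of $(G,w)$ with $w(\widehat T) = \bar w(T)$: the components of $G[\widehat T]$ are unions of components of $G[S]$ (and, when $T$ contains components of $G-S$, possibly also components of $G-S$) that get glued together precisely along edges recorded in $H$, hence correspond to connected pieces of $H[T]$; similarly for $G - \widehat T$; and the safe-set inequality $w(C) \ge w(D)$ across an edge in $G$ is implied by the corresponding inequality $\bar w(C') \ge \bar w(D')$ across the corresponding edge of $H$, since weights are additive and the coarser components only accumulate more weight on the $C$-side while the adjacency structure in $\beta$ faithfully records which pieces touch. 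Conversely — and this is the crucial direction — I need that every minimum connected safe set of $(H,\bar w)$ pulls back to a connected safe set of $(G,w)$; here "connected in $H$" must translate to "connected in $G$," which holds because an edge of $H$ between two components means there is a genuine edge of $G$ joining them, so a connected subgraph of $H$ lifts to a connected subgraph of $G$.

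With this dictionary in hand, the argument is: (a) $\s(H,\bar w) \le \s(G,w)$, because $S$ itself (as the union of all components of $G[S]$) corresponds to a vertex set $S_H \subseteq V(H)$, and the fact that $S$ is a safe set of $(G,w)$ makes $S_H$ a safe set of $(H,\bar w)$ of the same weight — in fact since $S$ is a *minimum* safe set and $\beta$ identifies each component with a single vertex, one expects $\s(H,\bar w) = \s(G,w)$, but the inequality $\le$ is all that is needed; and (b) $\cs(H,\bar w) \ge \cs(G,w)$, because any minimum connected safe set $T$ of $(H,\bar w)$ lifts to a connected safe set $\widehat T$ of $(G,w)$ of equal weight, so $\cs(G,w) \le \bar w(T) = \cs(H,\bar w)$. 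Combining, $\s(H,\bar w) \le \s(G,w) < \cs(G,w) \le \cs(H,\bar w)$, so $\bar w$ is a weight function on $V(H)$ witnessing $\s(H,\bar w) < \cs(H,\bar w)$, i.e. $H = \beta(G,S) \notin \G$, as desired. I would also remark that $\bar w$ takes positive values since each component is nonempty and $w$ is positive, and that $V(H)$ is nonempty because $G$ is connected and $S$ is a proper nonempty subset.

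The main obstacle is the correspondence between *connected* safe sets, i.e. showing $\cs(G,w) \le \cs(H,\bar w)$ cleanly. One must be careful that when a connected safe set $T$ of $H$ happens to include some components of $G - S$ (so $\widehat T$ is not contained in $S$), the induced-subgraph bookkeeping in $G$ — which components of $G[\widehat T]$ and of $G - \widehat T$ arise, and which pairs are adjacent — still matches the bookkeeping in $H[T]$ and $H - T$. The cleanest way to handle this is to prove a general lemma: for *any* $T \subseteq V(H)$ (a union of $G$-components), the bipartite graph $\beta(G, \widehat T)$ is obtained from $\beta(H, T)$ by contracting each of its vertices (which is a set of $G$-components) into a single vertex with the summed weight, and safe sets are preserved under such weight-contractions. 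This isolates exactly the combinatorial fact needed and makes both inequalities (a) and (b) immediate consequences. I expect the verification of this contraction lemma — in particular that no spurious components or adjacencies are created — to be the only genuinely delicate point; everything else is routine additivity of $w$.
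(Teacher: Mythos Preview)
Your proposal is correct and follows essentially the same approach as the paper: define the induced weight $\bar w(D)=w(D)$ on $\beta(G,S)$, observe that the set $\mathcal S$ of components of $G[S]$ is a safe set of $(\beta,\bar w)$ of weight $w(S)$, and that any connected safe set of $(\beta,\bar w)$ lifts to a connected safe set of $(G,w)$ of the same weight, yielding $\s(\beta,\bar w)\le \s(G,w)<\cs(G,w)\le \cs(\beta,\bar w)$. The paper phrases this by contradiction and simply asserts the lifting step in one line, whereas you spell out the dictionary (components of $G[\widehat T]$ and $G-\widehat T$ correspond to components of $H[T]$ and $H-T$) more carefully; the extra care is warranted but not a different idea.
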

\begin{proof}
Let $\beta:=\beta(G,S)$ and let $w_{\beta}$ be a weight function on $V(\beta)$ defined by $w_\beta(D):=w(D)$ for each vertex $D$ of $\beta$. Let $\mathcal{S}$ be the set of the components of $G[S]$.
Then clearly, $\mathcal{S}$ is a safe set of $(\beta,w_\beta)$, and therefore,  $\s(\beta,w_\beta)\le w_\beta(\mathcal{S})=w(S)$.
Suppose that  $\s(\beta,w_\beta)=\cs(\beta,w_\beta)$. Then there is a connected safe set $\mathcal{S}'$ of $(\beta,w_\beta)$ such that
$\cs(\beta,w_\beta)=w_\beta(\mathcal{S}')$, which implies that $S'=\cup_{D\in \mathcal{S}'} D$ is a connected safe set of $(G,w)$. In addition,  $w(S')=w_\beta(\mathcal{S}')=\cs(\beta,w_\beta)=\s(\beta,w_\beta) \le w(S)$, which is a contradiction.
Hence, $\s(\beta,w_\beta)\neq \cs(\beta,w_\beta)$ and so $\beta$ does not belong to $\G$.
\end{proof}

The following proposition is a direct consequence of Lemma~\ref{lem:intro:beta}.
\begin{prop}\label{rmk:intro:contraction}
For a connected graph $G$, if $\beta(G,S)\in \G$ for every $S\subset V(G)$, then $G$ belongs to $\G$.
\end{prop}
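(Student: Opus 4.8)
The plan is to prove the contrapositive: assuming $G\notin\G$, I will exhibit a subset $S\subset V(G)$ with $\beta(G,S)\notin\G$. Since $G\notin\G$, by definition there is a weight function $w$ on $V(G)$ with $\s(G,w)\neq\cs(G,w)$; as $\s(G,w)\le\cs(G,w)$ always holds, this forces $\s(G,w)<\cs(G,w)$. Because $V(G)$ is finite, there are only finitely many safe sets of $(G,w)$, so a minimum safe set $S$ of $(G,w)$ exists.

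Next I would simply invoke Lemma~\ref{lem:intro:beta} with this $w$ and this $S$: the hypotheses ``$G$ not in $\G$'', ``$S$ is a minimum safe set of $(G,w)$'', and ``$\s(G,w)<\cs(G,w)$'' are all met, so the lemma yields $\beta(G,S)\notin\G$. One small point worth recording is that, since $G$ is connected and $\beta(G,S)$ is obtained from $G$ by contracting each component of $G[S]$ and each component of $G-S$ to a single vertex, the graph $\beta(G,S)$ is itself connected; hence the statement $\beta(G,S)\in\G$ is meaningful. The existence of such an $S$ contradicts the assumption that $\beta(G,S)\in\G$ for every $S\subset V(G)$, so $G\in\G$, completing the proof.

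There is essentially no obstacle here: the only things to be careful about are the existence of a minimum safe set and the harmless observation that $\beta(G,S)$ is connected, both of which are immediate. All the real content lies in Lemma~\ref{lem:intro:beta}, and this proposition is merely its contrapositive reformulation, packaged in the form most convenient for the contraction-type arguments of the later sections (where one repeatedly wants to deduce $G\in\G$ from $\beta(G,S)\in\G$ for all $S$).
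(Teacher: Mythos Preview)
Your proposal is correct and matches the paper's approach exactly: the paper simply states that the proposition is a direct consequence of Lemma~\ref{lem:intro:beta}, and your contrapositive argument spells out precisely that deduction. The extra remarks about finiteness of safe sets and connectedness of $\beta(G,S)$ are accurate but not essential, since these points are already implicit in the setup.
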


Now we give the following observation.

\begin{lem}\label{easy:observation}
Let $G$ be a connected graph such that $\s(G,w)<\cs(G,w)$ for some weight function $w$, and $S$ be a minimum safe set of $(G,w)$.
Then $G-S$ is disconnected.
\end{lem}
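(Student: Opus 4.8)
The statement to prove is Lemma~\ref{easy:observation}: if $\s(G,w)<\cs(G,w)$ and $S$ is a minimum safe set of $(G,w)$, then $G-S$ is disconnected. My plan is to argue by contraposition: assume $G-S$ is connected and derive that $S$ can be converted into a \emph{connected} safe set of no larger weight, which would force $\cs(G,w)\le w(S)=\s(G,w)$, contradicting the strict inequality.

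So suppose $G-S$ is connected; call it $D$. First I would observe that since $G$ is connected and $G-S$ is a single component, each component $C$ of $G[S]$ is adjacent to $D$ (otherwise $C$ would be a component of $G$, contradicting connectedness of $G$ — here $S\neq V(G)$ since $G-S$ being empty is excluded because then $\s=\cs=w(G)$, or one simply notes $D$ is a genuine component). Hence the safe-set condition gives $w(C)\ge w(D)$ for \emph{every} component $C$ of $G[S]$. Now I would pick one component $C_0$ of $G[S]$; it is a connected subset with $w(C_0)\ge w(D)$, and its only neighbor among components of $G-C_0$ is $D$ together with the other components of $G[S]\setminus C_0$ — but those other components have weight $\ge w(D)$ as well. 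Wait: the cleanest route is to note that $C_0$ by itself is already a connected safe set of $(G,w)$, because $G-C_0 = D \cup (S\setminus C_0)$, and for any component $D'$ of $G-C_0$ adjacent to $C_0$ we need $w(C_0)\ge w(D')$; if $D'=D$ this holds, and if $D'$ is another component of $G[S]$ then $D'$ and $C_0$ are not adjacent (distinct components of $G[S]$), so the condition is vacuous for them. Therefore $C_0$ is a connected safe set with $w(C_0)\le w(S)=\s(G,w)$, giving $\cs(G,w)\le \s(G,w)$ — a contradiction.

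The main thing to be careful about is the boundary case $S=V(G)$, i.e. $G-S=\emptyset$: there is no component $D$ at all, but then $S=V(G)$ is itself a connected safe set (if $G$ is connected) with $w(S)=w(G)\ge \cs(G,w)$; on the other hand a minimum safe set of weight $w(G)$ forces $\s(G,w)=w(G)\ge\cs(G,w)$, again contradicting strict inequality, so this case does not arise. I expect no real obstacle here — the only subtlety is making sure "$D$ adjacent to $C_0$" is handled and that the other components of $G[S]$ impose no constraint on $C_0$ because induced-subgraph components are pairwise non-adjacent; everything else is immediate from the definitions of safe set and connected weighted safe number.
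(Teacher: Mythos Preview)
Your argument contains a genuine gap in the step where you claim that $C_0$ is a connected safe set. You write that the components of $G-C_0$ are either $D$ or one of the other components of $G[S]$, and that the latter impose no constraint since distinct components of $G[S]$ are non-adjacent. But this misidentifies the components of $G-C_0$: you have already observed that \emph{every} component of $G[S]$ is adjacent to $D$, so in $G-C_0$ the set $D$ together with all the remaining components of $G[S]$ merge into a \emph{single} connected component, namely all of $V(G)\setminus C_0$. Thus the safe-set condition you must verify is $w(C_0)\ge w(V(G)\setminus C_0)$, and nothing in the hypotheses guarantees this (indeed, typically $w(C_0)$ will be much smaller than the rest of the graph).

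The fix is to take the \emph{complement} of a component rather than the component itself, which is exactly what the paper does: let $D_1,\dots,D_k$ ($k\ge 2$) be the components of $G[S]$ with $w(D_1)$ minimum, and set $S'=V(G)\setminus D_1$. Then $G[S']$ is connected (since $D$ links all the $D_i$ with $i\ge 2$), the unique component of $G-S'$ is $D_1$, and $w(S')=w(D)+\sum_{i\ge 2}w(D_i)\le w(D_1)+\sum_{i\ge 2}w(D_i)=w(S)$ because $D$ is adjacent to $D_1$ and $S$ is safe. Finally $w(S')\ge w(D_2)\ge w(D_1)$, so $S'$ is a connected safe set of weight at most $w(S)$, giving the desired contradiction. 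Your handling of the boundary case $S=V(G)$ is fine (and in fact unnecessary once one notes $k\ge 2$ forces $S\ne V(G)$ is irrelevant---what matters is that $G-S$ has at least one component, which follows since otherwise $G[S]=G$ is connected).
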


\begin{proof}
Let $D_1,\ldots, D_k$ be
the components of $G[S]$.
Note that $k\ge 2$, since $\s(G,w) <\cs (G,w)$.
Without loss of generality, we may assume that
$w(D_1)=\min\{w(D_1), \ldots, w(D_k)\}$.
If $G-S$ is connected, then $V(G) \setminus D_1$ is a connected
safe set of $(G,w)$ whose weight is at most $w(S)$, which is a
contradiction.
\end{proof}

The graph family $\G$ is not changed even if we allow a weight function to include $0$. A nonnegative weight function on $V(G)$ means a mapping associating each vertex with a nonnegative real number, and note that the notions of $\s(G,w)$ and $\cs(G,w)$ are well-defined for a graph $G$ and  a nonnegative weight function $w$ on $V(G)$.
Let $\G_0$ be the set of graphs $G$ such that $\s(G,w)=\cs(G,w)$ for every nonnegative weight function $w$ on $V(G)$.
As the family $\G$ is defined by assuming that the values of all weight functions have positive real numbers,  it is clear that $\G_0\subset \G$. In fact, the equality holds.
\begin{prop}\label{prop:=<}
It holds that $\G= \G_0 $.
\end{prop}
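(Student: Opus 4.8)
The containment $\G_0 \subseteq \G$ is immediate from the definitions, since every positive weight function is in particular a nonnegative one. So the entire content is the reverse inclusion $\G \subseteq \G_0$: if $G \notin \G_0$, I want to show $G \notin \G$. Thus I would fix a connected graph $G$ together with a \emph{nonnegative} weight function $w$ on $V(G)$ witnessing $\s(G,w) < \cs(G,w)$, and from it manufacture a strictly \emph{positive} weight function $w'$ with $\s(G,w') < \cs(G,w')$. The natural candidate is a perturbation $w' = w + \varepsilon \cdot \mathbf{1}$ (add a common small $\varepsilon > 0$ to every vertex), or more generally $w_\varepsilon = w + \varepsilon u$ for a fixed positive function $u$; the plan is to argue that for all sufficiently small $\varepsilon > 0$ the strict inequality is preserved.

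The key steps, in order, are as follows. First, I would record the finiteness observation: there are only finitely many subsets $S \subseteq V(G)$, so there are only finitely many candidate (connected) safe sets, and being a weighted safe set of $(G,w)$ is determined by finitely many inequalities of the form $w(C) \ge w(D)$ ranging over adjacent pairs of components. Second — and this is the crucial monotonicity point — I would show that the property ``$S$ is a safe set'' can only be \emph{gained}, never lost, as $\varepsilon \to 0^{+}$ along the perturbation; more precisely, if $S$ is a safe set of $(G,w_\varepsilon)$ for a sequence $\varepsilon \to 0^+$, then $S$ is a safe set of $(G,w)$ (each defining inequality is closed, so it passes to the limit), and conversely if $S$ is a safe set of $(G,w)$ with all the relevant inequalities \emph{strict}, it remains one for small $\varepsilon$. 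The delicate case is a safe set of $(G,w)$ for which some inequality $w(C) = w(D)$ holds with equality; here I would choose the perturbation $u$ carefully — e.g. take $u$ to be the all-ones function and observe $w_\varepsilon(C) - w_\varepsilon(D) = (w(C) - w(D)) + \varepsilon(|C| - |D|)$, which need not stay $\ge 0$. Third, using Lemma~\ref{easy:observation} I may assume $S$ is a minimum safe set of $(G,w)$ with $G-S$ disconnected, and with $D_1, \dots, D_k$ the components of $G[S]$, $k \ge 2$; then for each small $\varepsilon$ I would bound $\s(G, w_\varepsilon) \le w_\varepsilon(S)$ (since $S$ stays a safe set as weights only increase — any inequality $w(C)\ge w(D)$ with $C$ a whole union of components of $G[S]$ and $D$ a component of $G-S$ is, after the perturbation, $w(C)+\varepsilon|C| \ge w(D)+\varepsilon|D|$, and one checks $|C| \ge |D|$ need not hold, so instead I keep $S$ itself fixed and verify its own defining inequalities survive) while simultaneously bounding $\cs(G,w_\varepsilon)$ from below: a minimum connected safe set $T$ of $(G,w_\varepsilon)$ is in particular a connected safe set of $(G,w)$ in the limit, so $\liminf_{\varepsilon\to 0^+}\cs(G,w_\varepsilon) \ge \cs(G,w)$. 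Combining, $\limsup_{\varepsilon\to 0^+} \s(G,w_\varepsilon) \le w(S) = \s(G,w) < \cs(G,w) \le \liminf_{\varepsilon\to 0^+}\cs(G,w_\varepsilon)$, so for all sufficiently small $\varepsilon > 0$ we get $\s(G,w_\varepsilon) < \cs(G,w_\varepsilon)$ with $w_\varepsilon$ positive, hence $G \notin \G$.

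I expect the main obstacle to be handling the safe sets that sit exactly on a boundary equality $w(C) = w(D)$: a uniform additive shift can destroy such a safe set if $|C| < |D|$, and destroy a connected safe set's safeness likewise, so one must be careful about which direction each inequality can fail. The clean way around this is to argue entirely via the compactness/limit argument sketched above — work with a sequence $\varepsilon_n \to 0^+$, pass to a subsequence along which the (finitely many) minimum safe sets and minimum connected safe sets of $(G,w_{\varepsilon_n})$ stabilize to fixed sets $S^\star$ and $T^\star$, note $S^\star$ is a safe set and $T^\star$ a connected safe set of $(G,w)$ by taking limits of the defining inequalities, whence $\lim \s(G,w_{\varepsilon_n}) = w(S^\star) \ge \s(G,w)$ and $\lim \cs(G,w_{\varepsilon_n}) = w(T^\star) \ge \cs(G,w)$; but also $\s(G,w_{\varepsilon_n}) \le w_{\varepsilon_n}(S) \to w(S) = \s(G,w)$ for the fixed witnessing set $S$, forcing $\lim \s(G,w_{\varepsilon_n}) = \s(G,w) < \cs(G,w) \le \lim \cs(G,w_{\varepsilon_n})$, a contradiction to $\s = \cs$ holding for every $w_{\varepsilon_n}$. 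This avoids ever needing to decide the sign of $|C|-|D|$ and reduces the whole proposition to the two soft facts that the safe-set conditions are closed under limits and that there are only finitely many candidate sets.
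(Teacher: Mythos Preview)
Your overall architecture --- perturb to a positive weight, use finiteness of $V(G)$ to pass to a subsequence on which the minimum connected safe sets stabilize, and take limits in the closed defining inequalities --- is exactly the paper's strategy. But there is a real gap, and it is precisely the one you yourself flagged and then did not resolve. In your final limit argument you invoke $\s(G,w_{\varepsilon_n}) \le w_{\varepsilon_n}(S)$ for the fixed minimum safe set $S$ of $(G,w)$. That inequality requires $S$ to be a safe set of $(G,w_{\varepsilon_n})$, which with the uniform shift $w_\varepsilon = w + \varepsilon\mathbf{1}$ need not hold: if some adjacent pair $(C,D)$ has $w(C)=w(D)$ and $|C|<|D|$, then $w_\varepsilon(C)<w_\varepsilon(D)$ and $S$ ceases to be safe for every $\varepsilon>0$. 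Without this upper bound on $\s(G,w_{\varepsilon_n})$ the argument collapses: you only get $\lim \s(G,w_{\varepsilon_n}) = w(S^\star)\ge \s(G,w)$ and $\lim \cs(G,w_{\varepsilon_n}) = w(T^\star)\ge \cs(G,w)$, and equality $\s=\cs$ along the sequence gives $w(S^\star)=w(T^\star)\ge \cs(G,w)$, which is no contradiction.

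The paper's fix is exactly to choose a \emph{non-uniform} perturbation tailored to the fixed safe set $U$: add $\varepsilon|Z|$ to every vertex of $U$ and only $\varepsilon$ to each zero-weight vertex outside $U$ (and nothing to positive-weight vertices outside $U$). Then for any component $D$ of $G[U]$ and adjacent component $T$ of $G-U$ one has $w_\varepsilon(T)\le w(T)+\varepsilon|Z|\le w(D)+\varepsilon|Z|\le w(D)+\varepsilon|Z||D|=w_\varepsilon(D)$, so $U$ stays safe for every $\varepsilon>0$ and the needed bound $\s(G,w_\varepsilon)\le w_\varepsilon(U)$ is legitimate. After that, your subsequence/limit argument goes through verbatim. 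So the missing idea is not the limit step but the choice of perturbation: you must bias the added weight toward the chosen safe set so that its defining inequalities are preserved, rather than shifting uniformly.
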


\begin{proof}
Suppose that $\G\neq  \G_0 $.
 Since $\G_0\subset \G$, there exist a connected graph $G\in \G\setminus \G_0$ and  a nonnegative weight function
$w$ on $V(G)$ such that
$\s(G,w) < \cs(G,w)$ and
$\{x \in V(G) : w(x)=0\}\neq \emptyset$.
For simplicity, let $Z=\{x \in V(G) : w(x)=0\}$.

Suppose that $U$ is a minimum safe set of $(G,w)$, and so $w(U)=\s(G,w)$.
For every positive real number $\epsilon$, let us define a positive
weight function $w_{\epsilon}$ on $V(G)$, as follows:
\[w_{\epsilon}(x):=
\begin{cases}
w(x)+\epsilon|Z|& \text{if }x \in  U,\\
\epsilon & \text{if }x \in Z\setminus U ,\\
w(x)&\text{otherwise.} \\
\end{cases}\]
Then for every component $D$ of $G[U]$ and
every component $T$ of $G-U$ such that $E_G(T,D)\neq \emptyset$,
it holds that
\[w_{\epsilon}(T) \le  w(T) +\epsilon|Z| \le w(D)+\epsilon|Z||D|=w_{\epsilon}(D),\] which implies that
$U$ is  a safe set of $(G,w_{\epsilon})$. Thus
\begin{eqnarray}\label{eq:nonnegative}
\forall \epsilon>0, \quad\s(G,w_{\epsilon})\le w_{\epsilon}(U).\end{eqnarray}
In addition,
$w_{\epsilon}(U)=w(U)+\epsilon |Z||U|$.
Thus, together with the fact that $w(U)=\s(G,w)$,
\[w_{\epsilon}(U) =  \s(G,w)+\epsilon |Z||U|.\]
Let  $\epsilon_1$ be a  positive real number so that $\s(G,w)+\epsilon_1 |Z||U|< \cs(G,w)$.
Let $\epsilon_{i+1}=\epsilon_i/2$ for $i\ge 1$.
Then
\begin{eqnarray}\label{eq:nonnegative2}
\forall i\ge 1, \quad w_{\epsilon_i}(U)< \cs(G,w). \end{eqnarray}
For every integer $i\ge 1$, let $S_i$ be a minimum connected safe set of $(G, w_{\epsilon_i})$. Then $w_{\epsilon_i}(S_i)=\cs(G,w_{\epsilon_i})=\s(G,w_{\epsilon_i})$ by the assumption that $G\in \G$ and the fact that $w_{\epsilon_i}$ is a positive weight function.
Together with \eqref{eq:nonnegative} and \eqref{eq:nonnegative2},  \[ w_{\epsilon_i}(S_i)=\s(G,w_{\epsilon_i})\le w_{\epsilon_i}(U)< \cs (G,w),\]
which implies that $S_i$ cannot be a connected safe set of $(G,w)$.

Since $V(G)$ is finite, there exists $S\subset V(G)$ such that  $S$ appears infinitely many times in the sequence $\{S_i\}_{i=1}^{\infty}$.
Then there exists an integer-valued function $\sigma$ such that
$S=S_{\sigma(i)}$ and $\sigma(i) < \sigma(i+1)$ for every positive integer $i$.

Clearly, $S=S_i$ for some $i$, and so $G[S]$ is connected.
Note that  for every positive integer $i$, for every component $T$ of $G-S$,  $w_{\epsilon_{\sigma(i)}}(T) \leq w_{\epsilon_{\sigma(i)}}(S)$.
Since the sequence
$\{ w_{\epsilon_{\sigma(i)}} \}_{i=1}^{\infty}$ converges (uniformly) to $w$,
it holds that $w(T) \le w(S)$  for every component $T$ of $G-S$, which implies that $S$ is a connected safe set of $(G,w)$.
We reach a contradiction to the observation that $S_i$ cannot be a connected safe set of $(G,w)$.
\end{proof}

Thanks to Proposition~\ref{prop:=<}, we allow a nonnegative weight function on the vertex set of a graph when we determine whether a given graph belongs to $\G$ or not.

\begin{prop}\label{prop:pendant}
Let $G$ be a connected graph in $\G$ with a cut vertex $v$.
For each component $D$ of $G-v$, $G[D\cup\{v\}]$ is in $\G$.
\end{prop}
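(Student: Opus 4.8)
The plan is to transfer the equality $\s=\cs$ from $G$ down to $H:=G[D\cup\{v\}]$ by making the other components of $G-v$ weightless. Write $D_1:=D$ and let $D_2,\dots,D_k$ (with $k\ge 2$, since $v$ is a cut vertex) be the remaining components of $G-v$; recall that $N(D_j)\setminus D_j=\{v\}$ for each $j$. By Proposition~\ref{prop:=<} it suffices to show $\s(H,w')=\cs(H,w')$ for every \emph{nonnegative} weight function $w'$ on $V(H)$. If $w'\equiv 0$ this is trivial (a single vertex is a connected safe set of weight $0$), so assume $w'(V(H))>0$. Extend $w'$ to a weight function $w$ on $V(G)$ by setting $w\equiv w'$ on $V(H)$ and $w\equiv 0$ on $V(G)\setminus V(H)$.

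\emph{Step 1: $\s(G,w)\le\s(H,w')$.} I would first check that any safe set $S_1$ of $(H,w')$, regarded as a subset of $V(G)$, is also a safe set of $(G,w)$, and $w(S_1)=w'(S_1)$. This is a short case analysis on whether $v\in S_1$: in both cases the component structure of $G-S_1$ differs from that of $H-S_1$ only in that the single component containing $v$ has the weightless sets among $D_2,\dots,D_k$ attached to it, and attaching weight-$0$ pieces never breaks a safe-set inequality. Minimising over $S_1$ gives the claim; since $G\in\G=\G_0$ we then get $\cs(G,w)=\s(G,w)\le\s(H,w')$. Fix a minimum connected safe set $S$ of $(G,w)$, so $w(S)=\cs(G,w)\le\s(H,w')$.

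\emph{Step 2: projecting $S$ back to $H$.} Since $S$ is connected and $v$ is a cut vertex, either $v\in S$ or $S$ lies inside a single $D_i$. If $S\subseteq D_1=D$, then $S$ is already a connected safe set of $(H,w')$ with $w'(S)=w(S)$ (again passing between $G$ and $H$ only attaches weightless pieces to the component containing $v$). If $v\in S$, put $S':=S\cap V(H)$: using that $v$ is a cut vertex, any path of $G[S]$ joining two vertices of $V(H)$ stays inside $V(H)$ (it cannot enter and leave some $D_j$, $j\ge2$, without revisiting $v$), so $H[S']$ is connected; the components of $H-S'$ are exactly the components of $G-S$ lying in $D$, with matching adjacency to $S'$ and to $S$, and $w(S)=w(S\cap V(H))=w'(S')$ since $w$ vanishes off $V(H)$ — hence $S'$ is a connected safe set of $(H,w')$ with $w'(S')=w(S)$. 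I expect the remaining case $S\subseteq D_i$ with $i\ge2$ to be the main obstacle: there $w(S)=0$, but tracing a path in $D_i$ from $S$ to $v$ and deleting $S$ shows that the component $T_v$ of $G-S$ containing $v$ is adjacent to $S$; since $T_v\supseteq D\cup\{v\}$ we get $w(T_v)\ge w'(V(H))>0=w(S)$, contradicting safeness, so this case does not occur. In every case $\cs(H,w')\le w(S)\le\s(H,w')$, hence $\cs(H,w')=\s(H,w')$; as $w'$ was arbitrary, $H\in\G_0=\G$.
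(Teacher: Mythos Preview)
Your proof is correct and follows essentially the same approach as the paper's: extend the weight by zero outside $H$, show safe sets of $(H,w')$ are safe sets of $(G,w)$, project a minimum connected safe set of $(G,w)$ back to $H$, and invoke $\G=\G_0$. You are in fact more explicit than the paper about the edge case $S\subseteq D_i$ with $i\ge 2$, which the paper's proof handles only implicitly via the positivity of $w^H$.
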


\begin{proof}
Let $H=G[D\cup\{v\}]$ where $D$ is a component of $G-v$, and
suppose that $H$ is not in $\G$. Then there is a weight function $w^H$ on $V(H)$ such that $\s(H,w^H)<\cs(H,w^H)$.
We define a vertex weight function $w$ on $V(G)$ by
\[w(x)=\begin{cases}
  w^H(x) & \text{if }x\in V(H)\\
  0 & \text{otherwise}.
\end{cases}\]
Let $S$ be a minimum safe set of $(H,w^H(x))$.
By the definition of $w$, it is easy to observe that $S$ is also a safe set of $(G,w)$ and so $\s(G,w)\le w(S)=w^H(S)=\s(H,w^H)$, and thus,
\begin{eqnarray}\label{eq1:pendant}
&& \s(G,w)\le \s(H,w^H).
\end{eqnarray}
Now we take a minimum connected safe set $U$ of $(G,w)$, i.e., $w(U)=\cs(G,w)$.
Let $U^H=U\setminus(V(G)\setminus V(H))$.
From the facts that $w(x)=0$ for all vertices $x$ in $V(G)\setminus V(H)$ and $v$ is a cut vertex, $U^H$ is a connected safe set of $(H,w^H)$ and so $\cs(H,w^H)\le w^H(U^H)=w(U)=\cs(G,w)$.
Thus  $\cs(H,w^H)\le \cs(G,w)$.
Hence, together with~\eqref{eq1:pendant},
\[  \s(G,w)\le \s(H,w^H) < \cs(H,w^H)\le \cs(G,w),\]
which implies that $G\not\in \G$, a contradiction.
\end{proof}

From Proposition~\ref{prop:pendant}, it follows that  for a connected graph $G$ in $\G$, each block of $G$ is in $\G$.
Hence, if we add a pendant edge to a graph not in $\G$, then the resulting graph is also not in $\G$.

\section{Contractions and the graphs not in $\G$}\label{sec:contraction}

A graph $G$ is \textit{contractible} to a graph $H$ (or $H$-\textit{contractible}) if $H$ can be obtained from
a partition of $V(G)$ by contracting each part to a vertex. Equivalently, a graph $G$ is \textit{contractible} to $H$ if there is a surjection $\phi : V (G) \rightarrow V (H)$ satisfying the following:
\begin{itemize}
\item[] $E_G( \phi^{-1}(h_i), \phi^{-1}(h_j))\neq \emptyset$ if and only if $h_ih_j \in E(H)$, for every $h_i,h_j \in V (H)$.
\end{itemize}
For each $h \in V (H)$, $\phi^{-1}(h)$ is called a \textit{bag}.
A bag is said to be \textit{connected} if it induces a connected graph in $G$.

In this section, we present several sufficient conditions for a graph not to be in $\G$ in terms of the above contraction argument. The lemmas obtained in this section play an important role in proving our main results.

\subsection{Graphs contractible to a subgraph of $K_{2,3}$}

In this subsection, we discuss some graphs contractible to some subgraphs of $K_{2,3}$. More precisely, we consider $H_i$-contractible graphs where $H_i$ are the graphs as described in Figure~\ref{fig:h1h2h3}, such that the bags corresponding to $u_2$ and $u_4$ are always connected.

We remark that for each $i\in\{1,2,3\}$, $H_i$ does not belong to $\G$.
Here $\alpha$ is a real number such that $\alpha>1$ and let $w_i$ be a weight function on $V(H_i)$ defined by $w_i(u_4)=w_i(u_5)=\alpha$ and $w_i(u_2)=\alpha+1$.
If $i=1$, then $w_1(u_1)=\alpha+1$ and $w_1(u_3)=1$.
If $i\neq 1$, then $w_i(u_1)=w_i(u_3)=\alpha$.
Then for each $i\in\{1,2,3\}$, $\{u_2,u_4\}$ is a unique minimum safe set  of $(H_i,w_i)$ and therefore $\s(H_i,w_i)<\cs(H_i,w_i)$.

\begin{figure}[h!]
\centering
\includegraphics[width=12cm,page=4]{figures.pdf}\\
    \caption{All subgraphs of $K_{2,3}$ not in $\G$.
 }\label{fig:h1h2h3}
\end{figure}

Here are several assumptions and common notation throughout this subsection (in Lemmas~\ref{prop:contractible:path} -~\ref{prop:contractible:K23}). We assume that $G$ is a connected graph which is contractible to $H_i$  for some $i\in \{1,2,3\}$, and let $V_j$ be the bag corresponding to $u_j$ of $H_i$ for each $j\in\{1,\ldots,5\}$. In addition, we assume that $V_2$ and $V_4$ are connected bags and let $\alpha>1$ be a sufficiently large real number.

\begin{lem}\label{prop:contractible:path}
Let $G$ be a connected graph contractible to $H_1$. Then $G\not\in\G$.
\end{lem}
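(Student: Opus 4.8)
The plan is to show that any connected graph $G$ that is $H_1$-contractible, with the bags $V_2$ and $V_4$ connected, fails to lie in $\G$; by Proposition~\ref{rmk:intro:contraction} and Lemma~\ref{lem:intro:beta} it suffices to exhibit one nonnegative weight function $w$ on $V(G)$ for which $\s(G,w)<\cs(G,w)$. (By Proposition~\ref{prop:=<} nonnegative weights are allowed.) The natural choice is to push the weights $w_1$ already constructed on $V(H_1)$ down to $G$: I would put all the weight of the bag $V_j$ onto a single well-chosen vertex of that bag, and assign weight $0$ to every other vertex of $G$. Concretely, fix representatives: for $j\in\{2,4\}$ a vertex $v_j\in V_j$, and for $j\in\{1,3,5\}$ a vertex $v_j\in V_j$ adjacent (in $G$) to a vertex of $V_2$ if $u_j\sim u_2$ in $H_1$, respectively adjacent to a vertex of $V_4$, chosen so that the adjacencies that exist in $H_1$ are witnessed by edges incident to these representatives. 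Then set $w(v_j)=w_1(u_j)$ and $w(x)=0$ for all other $x$.

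The key steps are then: (1) Verify that $\widetilde S := V_2\cup V_4$ is a safe set of $(G,w)$. Since $V_2$ and $V_4$ are connected bags, $G[\widetilde S]$ has exactly two components $V_2$ and $V_4$ (there is no edge between them because $u_2u_4\notin E(H_1)$), with weights $w(V_2)=\alpha+1$ and $w(V_4)=\alpha$. Every component $D$ of $G-\widetilde S$ is contained in one of $V_1,V_3,V_5$, hence has weight $0$ or, if it contains the relevant representative, weight at most $\max\{w_1(u_1),w_1(u_3),w_1(u_5)\}=\alpha+1$, and one checks from the $H_1$ adjacency pattern that such a heavy component is adjacent only to $V_2$ (for $u_1$) or in any case to a component of $G[\widetilde S]$ whose weight dominates it. This gives $\s(G,w)\le w(\widetilde S)=2\alpha+1$. (2) Bound $\cs(G,w)$ from below: any connected safe set $S$ of $(G,w)$, being connected, lies inside a single ``merged region'' in a sense made precise by the contraction, and a short case analysis on which bags $S$ meets — using the fact that the components of $G-S$ inherit, after contraction, components of $H_1$ minus the corresponding vertex set, together with the weights $w_1$ — forces $w(S)>2\alpha+1$ for $\alpha$ large. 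This is exactly the statement that $\{u_2,u_4\}$ is the unique minimum safe set of $(H_1,w_1)$, transported through $\phi$; I would lean on that computation rather than redo it. (3) Conclude $\s(G,w)\le 2\alpha+1 < \cs(G,w)$, so $G\notin\G$.

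Alternatively, and more cleanly, I can avoid re-proving (2) entirely by invoking Lemma~\ref{lem:intro:beta} in the form of Proposition~\ref{rmk:intro:contraction}: with the weight $w$ above and $S=\widetilde S=V_2\cup V_4$, the reduced graph $\beta(G,S)$ has vertex set $\{V_2,V_4,\text{components of }G-S\}$, and since $V_2,V_4$ are connected, $\beta(G,S)$ is exactly a graph contractible to $H_1$ on five ``effective'' vertices — in fact, after deleting the weight-$0$ isolated-type components, it is an induced-homomorphic copy of $H_1$ — with the induced weight $w_\beta$ equal to $w_1$. Thus $\s(\beta,w_\beta)<\cs(\beta,w_\beta)$ holds because it holds for $(H_1,w_1)$, and Lemma~\ref{lem:intro:beta}'s contrapositive gives $G\notin\G$ provided $S$ is a minimum safe set of $(G,w)$; step (1) above supplies that minimality once (2) is known, so the two routes are really the same and I would present step (1) plus a clean transfer argument.

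The main obstacle I anticipate is the bookkeeping in step (2)/the transfer: one must be careful that components of $G-S$ can split a single bag $V_j$ into several pieces and that edges of $G$ not predicted by $H_1$ do not appear (they cannot, by the definition of contraction, but the zero-weight vertices in $V_1,V_3,V_5$ make the component structure of $G-S$ genuinely more complicated than in $H_1$). The resolution is that zero-weight pieces never violate the safe-set inequality on the heavy side and only help on the light side, so collapsing to $H_1$ is weight-preserving for the relevant comparisons; making this monotonicity precise — ``adding zero-weight vertices and the induced edges to a bag cannot decrease a safe number in the wrong direction'' — is the one place where a careful argument, rather than a routine check, is needed.
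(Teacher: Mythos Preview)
Your plan has a genuine gap in the choice of weight function. By concentrating all of the weight of the middle bag $V_3$ onto a single representative $v_3$ and setting the remaining vertices of $V_3$ to weight~$0$, you allow a connected safe set to ``tunnel'' from $V_2$ to $V_4$ through zero-weight vertices of $V_3$ while avoiding $v_3$ entirely. Concretely, take $G$ with $V_j=\{a_j\}$ for $j\neq 3$ and $V_3=\{a_3,a_3'\}$, with edges $a_1a_2$, $a_2a_3$, $a_2a_3'$, $a_3a_4$, $a_3'a_4$, $a_4a_5$. This $G$ is $H_1$-contractible with $V_2,V_4$ connected. With your weights (say $v_3=a_3$), the set $S=\{a_2,a_3',a_4\}$ is connected and has weight exactly $2\alpha+1$; the components of $G-S$ are $\{a_1\}$, $\{a_3\}$, $\{a_5\}$ of weights $\alpha+1$, $1$, $\alpha$, all at most $w(S)$. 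Hence $\cs(G,w)\le 2\alpha+1$, and your step~(2) fails. The ``monotonicity'' you hoped for---that zero-weight vertices can only help---is false precisely here: they provide free connectivity across $V_3$.

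The paper's proof avoids this by \emph{spreading} the weight over $V_3$: every vertex of $V_3$ receives weight $1/|V_3|>0$, so any connected $S$ that meets both sides must satisfy $S\cap V_3\neq\emptyset$ and hence $w(S)\ge 2\alpha+1+w(S\cap V_3)>2\alpha+1$. The rest of your outline (showing $V_2\cup V_4$ is a safe set of weight $2\alpha+1$, then a short case analysis on which of $v_1,v_2,v_4,v_5$ lie in a putative connected safe set) matches the paper's argument; only the weight on $V_3$ needs to be changed.
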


\begin{proof}
Take $v_j\in V_j$ for each $j\in\{1,\ldots,5\}$ so that $v_1v_2\in E(G)$ and $v_4v_5\in E(G)$.
We define a weight function $w$ on $V(G)$ such that
\[ w(x)=\begin{cases}
  \alpha+1 &\text{if }x\in\{v_1,v_2\},\\
  \alpha  &\text{if }x\in\{v_4,v_5\},\\
  1/|V_3|  &\text{if }x\in V_3,\\
 0 &\text{otherwise.}
\end{cases}\]
Then $V_2\cup V_4$ is a safe set of $(G,w)$ with $w(V_2\cup V_4) =2\alpha+1$.
Suppose that $G\in\G$.
Then there is a connected safe set $S$ of $(G,w)$ with weight at most $2\alpha+1$.
If $\{v_1,v_2\}\subset S$ then $w(S)\ge 2\alpha+2>2\alpha+1$, which is a contradiction.
If $\{v_1,v_2\}\cap S=\emptyset$, then $G-S$ has a component of weight at least $2\alpha+2$, which is a contradiction to the definition of a safe set.
Thus, $|\{v_1,v_2\}\cap S|=1$, and therefore  $w(S\cap (V_1\cup V_2\cup V_3)) \le \alpha+2$.
If $\{v_4,v_5\}\cap S=\emptyset$, then $G-S$ has a component of weight at least $2\alpha$ and $w(S)\le \alpha+2$, which is a contradiction to the definition of a safe set.
Hence $\{v_4,v_5\}\cap S\neq \emptyset$.
Since $G[S]$ is connected, $S\cap V_3\neq \emptyset$, and therefore
\[w(S)\ge w(S\cap\{v_1,v_2\}) + w(S\cap\{v_4,v_5\}) + w(S\cap V_3) \ge 2\alpha+1 + w(S\cap V_3) >2\alpha+1,\]
a contradiction.
\end{proof}

\begin{lem}\label{prop:contractible:C4+}
Let $G$ be a connected graph contractible to $H_2$.
If
$|E_G(V_1,V_2)|=|E_G(V_2,V_3)|=1$, then $G\not\in\G$.
\end{lem}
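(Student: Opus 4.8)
The plan is to mimic the structure of the proof of Lemma~\ref{prop:contractible:path}, but to exploit the extra structural constraint $|E_G(V_1,V_2)|=|E_G(V_2,V_3)|=1$ in order to force a connected safe set of small weight to behave badly. Recall that $H_2$ is the subgraph of $K_{2,3}$ with $u_2,u_4$ on one side and $u_1,u_3,u_5$ on the other, so $V_2$ is adjacent to $V_1,V_3,V_5$, and $V_4$ is adjacent to $V_1,V_3,V_5$ (I will read the precise adjacencies off Figure~\ref{fig:h1h2h3}; the key point is that $V_2$ and $V_4$ are the two ``hub'' bags). Since the bags $V_2$ and $V_4$ are connected, $V_2\cup V_4$ together with the guaranteed edges is a safe set after an appropriate weighting. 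The two single-edge conditions mean that $V_2$ attaches to the rest of $G$ through exactly one vertex on the $V_1$ side and exactly one vertex on the $V_3$ side, which severely restricts how a connected set can pass ``through'' $V_2$.

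First I would fix vertices: let $e_{12}$ be the unique edge of $E_G(V_1,V_2)$ with endpoints $v_1\in V_1$, $v_2\in V_2$, and $e_{23}$ the unique edge of $E_G(V_2,V_3)$ with endpoints $v_2'\in V_2$, $v_3\in V_3$ (possibly $v_2=v_2'$). Also pick $v_5\in V_5$ and a vertex of $V_4$ adjacent to it, and vertices realizing the edges $E_G(V_1,V_4)$, $E_G(V_3,V_4)$, $E_G(V_4,V_5)$. Then I would define a weight function $w$ on $V(G)$ supported essentially on this handful of vertices, following the template of Lemma~\ref{prop:contractible:path}: put weight roughly $\alpha$ on the chosen vertices of $V_1,V_3,V_5$ and on one vertex of $V_4$ (or distribute $\alpha$ over small parts of these bags so that no single bag is ``cheap'' to half-include), weight $\alpha+1$ on the attachment vertex $v_2$ (or split among $v_2,v_2'$), and weight $0$ elsewhere; constants to be tuned so that $V_2\cup V_4$ is a safe set of weight exactly $2\alpha+1$ (or $3\alpha+1$, whichever the figure dictates), while any connected safe set is forced to be strictly heavier.

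The heart of the argument, assuming $G\in\G$, is to analyze a connected safe set $S$ of weight at most that of $V_2\cup V_4$. As in the $H_1$ case, I would argue that $S$ must meet each of the three ``outer'' components in a controlled way: omitting one of the heavy outer bags entirely leaves a too-heavy component of $G-S$, while including too much of it overshoots the weight budget. The single-edge hypotheses are what make this rigid: for $S$ to be connected and to touch both the $V_1$-side and the $V_3$-side of the picture while staying within budget, it essentially has to route through $v_2$ (and $v_2'$), and then a counting of $w(S\cap V_1)$, $w(S\cap V_2)$, $w(S\cap V_3)$, $w(S\cap V_5)$, $w(S\cap V_4)$ gives a strict inequality $w(S)>w(V_2\cup V_4)$, the desired contradiction. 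I expect the main obstacle to be bookkeeping: because $H_2$ has three outer bags rather than $H_1$'s two ``arms'', there are more cases for which outer bags $S$ does and does not meet, and one must choose the weights (and possibly spread the $\alpha$'s over the bags rather than concentrating them) carefully so that every case is ruled out simultaneously and so that $V_2$, being a cut-like bag with only one edge to each side, cannot be partially bypassed. A secondary subtlety is handling the possibility $v_2=v_2'$ versus $v_2\neq v_2'$, and the possibility that $S$ enters $V_2$ without using $v_2$ or $v_2'$ at all (then $S\cap V_2$ is disconnected from the outer bags on that side), which I would dispatch by noting it forces $S$ to miss a whole outer bag and hence leaves a heavy component outside.
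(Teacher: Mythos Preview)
Your plan is essentially the paper's own approach: put weight concentrated on one vertex per bag so that $V_2\cup V_4$ becomes a safe set of weight $2\alpha+1$, assume $G\in\G$, and rule out any connected safe set $S$ of weight at most $2\alpha+1$ by casework on which of the heavy vertices $S$ meets. Two points deserve sharpening before this becomes a proof.

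First, your reading of $H_2$ is off: $H_2$ is $K_{2,3}$ with the edge $u_2u_5$ deleted, so $V_2$ is adjacent only to $V_1$ and $V_3$, while $V_4$ is adjacent to all of $V_1,V_3,V_5$. This asymmetry is exactly what the argument exploits, and getting it wrong will scramble the case analysis. In particular there is no need to worry about $v_2\neq v_2'$: the paper simply picks $v_1\in V_1$, $v_2\in V_2$ with $v_1v_2\in E(G)$ and some $v_3\in V_3$ with a neighbour in $V_2$, puts weight $\alpha+1$ on $v_2$ alone, and the single-edge hypotheses do the rest.

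Second, the one technical device you only gesture at is decisive: the paper does \emph{not} put all of $V_4$'s weight on a single vertex, but sets $w(x)=1+\epsilon$ for every $x\in V_4\setminus\{v_4\}$ and $w(v_4)=\alpha-(1+\epsilon)(|V_4|-1)$. This is what kills the case $v_2\in S$, $S\cap V_4\neq\emptyset$: any incursion of $S$ into $V_4$ costs at least $1+\epsilon$, which combined with $w(v_2)+w(v_j)=2\alpha+1$ (for $j\in\{1,3\}$) already overshoots. If you leave $V_4\setminus\{v_4\}$ at weight $0$, a connected $S$ could thread through $V_4$ for free and the contradiction collapses. Likewise, the case $S\cap\{v_1,v_2,v_3\}=\{v_1\}$ or $\{v_3\}$ is dispatched by the parity observation that $w(S)\neq 2\alpha+1$ under this weighting, forcing $w(S)<2\alpha+1$ while the component of $G-S$ containing $v_2$ has weight at least $2\alpha+1$. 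Once you fix the adjacencies of $H_2$ and adopt this weight on $V_4$, the casework on $|S\cap\{v_1,v_2,v_3\}|\in\{1,2\}$ and then on whether $v_2\in S$ goes through exactly as you outline.
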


\begin{proof}
Take $v_j\in V_i$ for each $j\in\{1,\ldots,5\}$ so that $v_1v_2,v_4v_5\in E(G)$ and $v_3$ has a neighbor in $V_2$.
Let $\epsilon$ be a sufficiently small positive real number.
We define a weight function $w$ on $V(G)$ so that
\[ w(x)=\begin{cases}
\alpha &\text{if }x\in\{v_1,v_3,v_5\},\\
\alpha+1 &\text{if }x=v_2,\\
\alpha-(1+\epsilon)(|V_4|-1) &\text{if }x=v_4,\\
1+\epsilon&\text{if }x\in V_4\setminus\{v_4\},\\
0&\text{otherwise.}
\end{cases}  \]
Then $V_2\cup V_4$ is a safe set of $(G,w)$ with $w(V_2\cup V_4) =2\alpha+1$.
Suppose that $G\in\G$.
Then there is a connected safe set $S$ of  $(G,w)$ with weight at most $2\alpha+1$.

Since $w(v_1)+w(v_2)+w(v_3)=3\alpha+1$, $|S\cap \{v_1,v_2,v_3\}|\le 2$.
If $S\cap \{v_1,v_2,v_3\}=\emptyset$, then by the assumption that $|E_G(V_1,V_2)|=|E_G(V_2,V_3)|=1$, we have $V_2\cap S=\emptyset$ and so $V_2\cup\{v_1,v_3\}$ is in the same component of $G-S$ whose weight is  $3\alpha+1$, which is a contradiction to the definition of a safe set.
Thus  $1\le |S\cap \{v_1,v_2,v_3\}|\le 2$.

Suppose that $v_2\not\in S$.
If $S\cap \{v_1,v_2,v_3\}=\{v_1,v_3\}$,
then, for $G[S]$ being connected, $S\cap V_4\neq\emptyset$, which implies that
$w(S)\ge w(v_1)+w(v_3)+w(S\cap V_4)>2\alpha +1$, a contradiction. 
Suppose that $S\cap \{v_1,v_2,v_3\}=\{v_1\}$ or $\{v_3\}$. Then  $w(S)\neq 2\alpha+1$ by the definition of the weight function $w$, which implies that $w(S)<2\alpha+1$. On the other hand,
the vertices in $\{v_1,v_2,v_3\}\setminus S$ are in the same component of $G-S$ whose weight is at least $2\alpha+1$, a contradiction to the definition of a safe set.

Now suppose that $v_2\in S$.
If $S\cap V_4 = \emptyset$, then since  $|E_G(V_1,V_2)|=|E_G(V_2,V_3)|=1$,  for some $j\in \{1,3\}$ $V_j\cup V_4 \cup V_5$ is in the same component of $G-S$ whose weight is $3\alpha$, a contradiction to the definition of a safe set.
Thus  $S\cap V_4\neq \emptyset$.
Then  for  $G[S]$ being connected, $S$ contains $v_1$ or $v_3$, which implies that $w(S)\ge 2\alpha+1+w(V_4\cap S)\ge 2\alpha+1+(1+\epsilon)$, a contradiction.
\end{proof}

\begin{lem}\label{prop:contractible:K23}
Let $G$ be a connected graph contractible to $H_3$.
Suppose that $|V_1|=|V_2|=1$, $V_3$ is connected, and
there is a vertex $v_4\in V_4$ such that  $E_G(\{v_4\},V_3)\neq\emptyset$ and $E_G(\{v_4\},V_5)=E_G(V_4,V_5)$.
Then  $G\not\in\G$.
\end{lem}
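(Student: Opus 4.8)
The plan is to mimic the proofs of Lemmas~\ref{prop:contractible:path} and~\ref{prop:contractible:C4+}: I would construct a nonnegative weight function $w$ on $V(G)$ for which $V_2\cup V_4$ is a light weighted safe set while every connected weighted safe set is strictly heavier, so that $\s(G,w)<\cs(G,w)$ and hence $G\notin\G$ (this is enough by Proposition~\ref{prop:=<}). Recall that $H_3$ is, up to relabelling, the copy of $K_{2,3}$ with parts $\{u_2,u_4\}$ and $\{u_1,u_3,u_5\}$; thus in $G$ the only non‑adjacent pairs of bags are $\{V_1,V_3\}$, $\{V_1,V_5\}$, $\{V_3,V_5\}$ and $\{V_2,V_4\}$. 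Write $v_1$, $v_2$ for the unique vertices of $V_1$, $V_2$.

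For the weight function I would set $w(v_1)=\alpha$ and $w(v_2)=\alpha+1$; distribute total weight $\alpha$ over the connected bag $V_4$ exactly as in Lemma~\ref{prop:contractible:C4+}, namely $w(v_4)=\alpha-(1+\epsilon)(|V_4|-1)$ and $w(x)=1+\epsilon$ for every $x\in V_4\setminus\{v_4\}$; distribute total weight $\alpha$ over the (nonempty) set of vertices of $V_3$ adjacent to $V_4$, and total weight $\alpha$ over the (nonempty) set of vertices of $V_5$ adjacent to $v_4$, in each case so that every vertex of that set gets positive weight; and give weight $0$ to all remaining vertices. Here $\epsilon>0$ is tiny and $\alpha>1$ is taken so large that every positive value of $w$ dwarfs $(1+\epsilon)|V(G)|$. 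The design is dictated by the hypotheses: $E_G(\{v_4\},V_3)\neq\emptyset$ guarantees that $V_3$ has a vertex adjacent to $V_4$; the assumption $E_G(\{v_4\},V_5)=E_G(V_4,V_5)$ forces any path that enters $V_4$ from $V_5$ to do so exactly at $v_4$, through a weighted vertex; and since $V_3$ is connected, any path entering $V_4$ from $V_3$ leaves $V_3$ at a weighted vertex. One checks directly that $S_0:=V_2\cup V_4$ is a weighted safe set: the components of $G[S_0]$ are $\{v_2\}$ (weight $\alpha+1$) and $G[V_4]$ (weight $\alpha$), while the components of $G-S_0$ are $\{v_1\}$, $G[V_3]$ (connected, weight $\alpha$) and the components of $G[V_5]$ (total weight $\alpha$), each of weight at most $\alpha$ and each adjacent only to $\{v_2\}$ and/or $G[V_4]$; the safe‑set inequalities are immediate, so $\s(G,w)\le 2\alpha+1$.

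Now suppose $G\in\G$, so there is a connected weighted safe set $S$ with $w(S)\le 2\alpha+1$; I would derive a contradiction by cases, noting first that, since $G$ is connected and $S\neq\emptyset$, no component of $G-S$ can have weight exceeding $w(S)\le 2\alpha+1$, and that, as $w(v_1)+w(v_2)=2\alpha+1$, $S$ cannot contain $v_1$ together with $v_2$ and any other vertex of positive weight. If $v_2\notin S$, then using $v_1v_2\in E(G)$, the connectedness of $V_3$ and the atomicity $|V_1|=1$, one shows that the component of $G-S$ containing $v_2$, or (when $S$ surrounds $v_2$ cheaply) the component of $G-S$ containing $v_4$, swallows too much weight (at least two of the weight‑$\alpha$ ``masses'' $v_1$, $G[V_3]$, $G[V_5]$ besides $v_2$, or the bulk of $V_4$ together with its weighted neighbourhoods in $V_3$ and $V_5$), contradicting $w(S)\le 2\alpha+1$. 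If $v_2\in S$, the weight budget forces $v_1\notin S$; and if moreover $V_4\cap S=\emptyset$, then the component $D$ of $G-S$ that contains $V_4$ also contains $v_1$ (since $E_G(V_1,V_4)\neq\emptyset$) and every survivor in $V_3$ and $V_5$ adjacent to $V_4$, so
\[
w(D)\ge w(v_1)+w(V_4)+\bigl(\alpha-w(S\cap V_3)\bigr)+\bigl(\alpha-w(S\cap V_5)\bigr)\ge 3\alpha>2\alpha+1,
\]
a contradiction (the last step uses $w(S\cap V_3)+w(S\cap V_5)\le w(S)-w(v_2)\le\alpha$). The remaining case is $v_2\in S$, $v_1\notin S$, $V_4\cap S\neq\emptyset$: here $S$ is reconnecting $\{v_2\}$ to $V_4$, and since $v_2\not\sim V_4$ while $v_1\notin S$, the path in $G[S]$ from $v_2$ to $S\cap V_4$ enters $V_4$ through a positively weighted vertex of $V_3$ or $V_5$, and $V_4\not\subseteq S$; one then plays that weight and $w(S\cap V_4)$ against the weight that $G-S$ must concentrate on the $V_4$‑side (the survivors among $V_4$, its weighted neighbours in $V_3$ and $V_5$, and $v_1$), and shows that either $w(S)>2\alpha+1$ or $G-S$ has a component of weight $>2\alpha+1$, using that $\alpha$ is huge relative to the $(1+\epsilon)$‑sized corrections from $V_4\setminus\{v_4\}$.

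The main obstacle is precisely this last case. The difficulty is that $S$ has several cheap‑looking options—slipping into $V_4$ at a low‑weight vertex of $V_4\setminus\{v_4\}$, or threading a nearly weightless path through $V_3$ or $V_5$—and one must show that each of them either overshoots the weight budget or leaves a heavy component behind; carrying this out requires a careful simultaneous accounting of $w(S\cap V_3)$, $w(S\cap V_4)$, $w(S\cap V_5)$ against the three ``$\ge\alpha$'' masses that must survive in $G-S$, and it is here that all the structural hypotheses ($V_3$ connected, $|V_1|=|V_2|=1$, $E_G(\{v_4\},V_3)\neq\emptyset$, $E_G(\{v_4\},V_5)=E_G(V_4,V_5)$) get consumed. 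A secondary point is calibrating $\epsilon$ small and $\alpha$ large enough that every strict inequality survives the $O(|V(G)|)$ corrections; and one should be prepared, in the most degenerate configurations (where some bag hangs off the rest of $G$ through an essentially pendant vertex), to finish instead by invoking Proposition~\ref{prop:pendant} to pass to a smaller graph already known not to lie in $\G$.
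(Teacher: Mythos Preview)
Your plan follows the paper's overall strategy: build a nonnegative weight $w$ with $w(V_2\cup V_4)=2\alpha+1$ and then rule out every connected safe set of weight $\le 2\alpha+1$. But your weight function and case organisation differ from the paper's in ways that matter, and the ``main obstacle'' case is not actually closed.

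The paper does two things you do not. First, before choosing weights it proves (as a separate claim) that some component $D$ of $G[V_5]$ is adjacent to both $V_2$ and $V_4$; if no such $D$ exists, it reorganises the bags to make $G$ $H_1$-contractible and finishes via Lemma~\ref{prop:contractible:path}. Second, and more importantly, it concentrates almost all of the $\alpha$-mass in $V_3$ and $V_5$ on single chosen neighbours $v_3$ and $v_5$ of $v_4$, and puts a carefully ordered hierarchy $\epsilon_3>\epsilon_5>\epsilon_4$ (with $\tfrac{1}{n}>\epsilon_3>2n\epsilon_5>2n^2\epsilon_4$) on the remaining vertices of $V_3$, $V_5$, $V_4$. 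This lets the whole analysis be organised by the finite set $X=S\cap\{v_1,\dots,v_5\}$: one first pins down $|X|=2$ and the parity constraints on $X$, and then each of the three surviving possibilities $\{v_2,v_3\}$, $\{v_2,v_5\}$, $\{v_1,v_4\}$ is killed by a short inequality that uses the $\epsilon$-ordering in an essential way (for instance, the fact that \emph{every} vertex of $V_3$ carries weight at least $1+\epsilon_3$ is exactly what forces $S\cap V_3=\emptyset$ in the last two cases).

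Your weight spreads the $V_3$- and $V_5$-mass over all boundary vertices and zeroes the rest. This loses the leverage above: $S$ can now contain zero-weight vertices of $V_3$ for free, and the ``positively weighted boundary vertex'' that your $S$ must cross can be as light as $\alpha/|V_3'|$, so you cannot immediately exclude $S\cap V_3\neq\emptyset$ or reduce to a finite list of shapes. Your sketch of the remaining case (``play $w(S\cap V_4)$ against the survivors'') is a plausible heuristic but is not an argument; the inequalities you would need do not fall out of the single-$\epsilon$ setup without further work, and the suggested fallback to Proposition~\ref{prop:pendant} is not how that proposition operates (it goes from $G\in\G$ to pieces of $G$ being in $\G$, not the other way you would need). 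In short, the missing idea is precisely the paper's: pick specific $v_3,v_5$ adjacent to $v_4$, put nearly all the weight there, and separate the remaining vertices by a three-level $\epsilon$-hierarchy so that the case analysis collapses to $X=S\cap\{v_1,\dots,v_5\}$.
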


\begin{proof} To reach a contradiction, suppose that $G\in\G$. We have the following claim.

\begin{clm}\label{claim:V5}
There is a component $D$ of $G[V_5]$ such that $E_G(D,V_2)\neq\emptyset$ and $E_G(D,V_4)\neq \emptyset$.
\end{clm}
\begin{proof}
Note that for every component $D$ of $G[V_5]$, either $E_G(D,V_2)\neq\emptyset$ or $E_G(D,V_4)\neq \emptyset$.
Let $U$ be the union of the components $D$ of $G[V_5]$ with $E_G(D,V_2)$
$=\emptyset$.
Then $E_G(U,V_4)\neq\emptyset$ and $E_G(U, V(G)\setminus (V_4\cup U))=\emptyset$.
Similarly, let $W$ be the union of the components $D$ of $G[V_5]$ with $E_G(D,V_4)=\emptyset$. Then $E_G(W,V_2)\neq\emptyset$ and $E_G(W, V(G)\setminus(V_2\cup W))=\emptyset$.
Suppose that $V_5\setminus (U\cup W)=\emptyset$.
Since $G$ is $H_3$-contractible, both $U$ and $W$ are nonempty.
By contracting $V'_j$'s where $V'_1=W$, $V'_2=V_2$, $V_4'=V_4$, $V'_5=U$, and $V'_3=V(G)\setminus (V'_1\cup V'_2\cup V'_4\cup V'_5)$,
$G$ is $H_1$-contractible for the graph $H_1$ in Figure~\ref{fig:h1h2h3}, which implies that $G\not\in \G$ by Lemma~\ref{prop:contractible:path}, a contradiction.
Hence, $V_5\setminus (U\cup W)$ is not empty, and so the claim holds.
\end{proof}

Now we let $V_1=\{v_1\}$ and $V_2=\{v_2\}$.
Take a neighbor $v_3\in V_3$ of $v_4$.
By Claim~\ref{claim:V5}, $G[V_5]$ has a component $D$ such that $E_G(D,V_2)\neq\emptyset$ and $E_G(D,\{v_4\})\neq \emptyset$.
We take a neighbor $v_5$ of $v_4$ from $D$.

Now $\epsilon_3>\epsilon_5>\epsilon_4>0$ are sufficiently small real  numbers so that $\frac{1}{n}>\epsilon_3>2n\epsilon_5> 2n^2 \epsilon_4$, where $n=|V(G)|$.
We define a weight function $w$ on $V(G)$ as follows:
\[w(a)=\begin{cases}
\alpha  & \text{if }a=v_1 \\
\alpha+1  & \text{if }a=v_2  \\
1+ \epsilon_3 & \text{if }a\in V_3 \setminus \{v_3 \} \\
\epsilon_4 & \text{if }a\in V_4\setminus \{v_4\} \\
\epsilon_5 & \text{if }a\in V_5\setminus \{v_5 \}, \\
\end{cases}\]
and then we determine the weights of $v_3,v_4,v_5$ so that
 $w(V_3)=w(V_4)=w(D)=\alpha$.

Since $G\in \G$ and  $V_2\cup V_4$ is a safe set of $(G,w)$ with $w(V_2\cup V_4) =2\alpha+1$,
there is a connected safe set $S$ of $(G,w)$ with weight at most $2\alpha+1$. For simplicity, let $X=S\cap \{v_1,v_2,v_3,v_4,v_5\}$.
Since $w(S)\le 2\alpha+1$, $|X| \le 2$. Moreover, we have the following claim.

\begin{clm}\label{claim:contractible:H_3}
We have $|X|=2$, and the following hold.
\begin{itemize}
\item[\rm(1)]  $|X\cap \{v_1,v_2\}|=1$ and $|X\cap \{v_3,v_4,v_5\}|=1$.
\item[\rm(2)]  $|X\cap\{v_2,v_4\}|=1$ and  $|X\cap\{v_1,v_3,v_5\}|=1$.
\end{itemize}
\end{clm}

\begin{proof}
To show (1), suppose that $\{v_3,v_4,v_5\}\cap X= \emptyset$.
Then $\{v_3,v_4,v_5\}$ is contained in a component of $G-S$, which is a contradiction to the definition of a safe set, since (note that $\alpha$ is sufficiently large.)
\begin{eqnarray*}
w(v_3)+w(v_4)+w(v_5)&=&3\alpha-(1+\epsilon_3)(|V_3|-1)-\epsilon_4(|V_4|-1)-\epsilon_5(|V_5|-1)\\
&>&2\alpha+1+(\alpha - n(1+\epsilon_3+\epsilon_4+\epsilon_5)) \ge w(S).
\end{eqnarray*}
Suppose that $\{v_1,v_2\}\cap X=\emptyset$.
Since $w(\{v_1,v_2\})=2\alpha+1$, $\{v_1,v_2\}$ is a component of $G-S$.
Hence, at least one vertex of $V_3$, say $z_3$, belongs to $S$.
Moreover, $w(S)=2\alpha+1$.
Since $w(V_3\cup V_4)=2\alpha$, it follows that $w(V_5\cap S)\ge 1$, and therefore the vertex $v_5$ must be in $S$.
Since $S$ is connected, the vertex $v_4$ must be in $S$.
It follows that
\begin{eqnarray*}
w(S)&\ge& w(z_3)+w(v_4)+w(v_5)\ge 1+\epsilon_3+\alpha-(|V_4|-1)\epsilon_4+\alpha-(|V_5|-1)\epsilon_5 \\
&>& 2\alpha+1 + (\epsilon_3-n\epsilon_4-n\epsilon_5) \\
&>& 2\alpha+1,
\end{eqnarray*}
a contradiction, where the last inequality follows from the choice of $\epsilon_3,\epsilon_4$, and $\epsilon_5$. As $|X|\le 2$, (1) holds.
We note that (1) also implies that $|X|=2$.

Now we show (2).
If $X=\{v_2,v_4\}$, then $S$ has at least one vertex in $V_1\cup V_3\cup V_5$ for $G[S]$ being connected, which implies that $w(S)\ge w(v_2)+w(v_4)+\min\{w(x)\mid x\in S\cap(V_1\cup V_3\cup V_5)\}\ge \alpha+1+(\alpha-n\epsilon_4) + \epsilon_5>2\alpha+1$ by the assumption on $\epsilon_4$ and $\epsilon_5$, a contradiction.
Hence $|X\cap\{v_2,v_4\}|\le 1$. Since $|X|= 2$, $X\cap\{v_1,v_3,v_5\}\neq \emptyset$.
By (1), it remains to show that $|X\cap\{v_1,v_3,v_5\}|\neq 2$.
Suppose that $|X\cap\{v_1,v_3,v_5\}|=2$. Then $\{v_2,v_4\} \subset V(G)\setminus S$.
By the assumption that $V_2=\{v_2\}$ and $v_4$ is a unique vertex in $V_4$ that has a neighbor in $V_5$, if $S\cap V_5\neq\emptyset$, then $S\subset V_5$, a contradiction to the assumption that $|X\cap\{v_1,v_3,v_5\}|=2$.
Thus $S\cap V_5=\emptyset$, and so the vertices in $D\cup\{v_2,v_4\}$ are in the same component of $G-S$, with weight more than $2\alpha+1$, a contradiction to the definition of a safe set. Hence $|X\cap\{v_1,v_3,v_5\}|\neq 2$ and so (2) holds.
\end{proof}

By Claim~\ref{claim:contractible:H_3}, $X=\{v_2,v_3\}$, $\{v_2,v_5\}$ or $\{v_1,v_4\}$.
Suppose that $X=\{v_2,v_3\}$. Since $v_4,v_5$ are in the same component of $G-S$  and its weight is at least $2\alpha- n\epsilon_5$, it holds that $2\alpha-n\epsilon_5\le w(S)$.
Let $k$ be the number of vertices $x$ in $V_3\cap S$ such that $w(x)=1+\epsilon_3$ ($|V_3|-1\ge k$).
Then, since every element in $S\setminus (V_2\cup V_3)$ has weight at most $\epsilon_5$,
 \[  2\alpha-n\epsilon_5\le  w(S)< \alpha+1 + (\alpha-(|V_3|-1)(1+\epsilon_3))+k(1+\epsilon_3)+n\epsilon_5.\]
If $k< |V_3|-1$, then $(\alpha-(|V_3|-1)(1+\epsilon_3))+k(1+\epsilon_3) \le \alpha-1-\epsilon_3$ and so
\[ 2\alpha-n\epsilon_5 \le w(S)\le 2\alpha-\epsilon_3+n\epsilon_5,\]
a contradiction since $\epsilon_3>2n\epsilon_5$.
Hence, $k=|V_3|-1$ and so  $w(S)=2\alpha+1$ and moreover, $S= V_2\cup V_3$. Then $V_1\cup V_4\cup V_5$  is the component of $G-S$ whose weight is more than $2\alpha+1$, which is a contradiction to the definition of a safe set.

Suppose that either $X=\{v_2,v_5\}$ or $X=\{v_1,v_4\}$.
Then $w(X) \ge 2\alpha - n\epsilon_4$.
If $S\cap V_3\neq \emptyset$, then together with  the fact that $n\epsilon_4<\epsilon_3$, we have
\[w(S)\ge w(X)+w(S\cap V_3) \ge 2\alpha - n\epsilon_4+(1+\epsilon_3)>2\alpha+1,\]
 a contradiction to the assumption that $w(S)\le 2\alpha+1$.
Hence, $S\cap V_3=\emptyset$. Then in each case, we will reach a contradiction to the definition of a safe set.
If $X=\{v_2,v_5\}$, then by the assumptions on the vertex $v_4$ and the fact that $v_4\not\in S$, we have $S \subset V_2\cup V_5$ and so the vertices in $V_1\cup V_3\cup V_4$ are contained in the same component of $G-S$ whose weight is more than $2\alpha+1$.
If  $X=\{v_1,v_4\}$, then $S\subset V_1\cup V_4\cup (V_5\setminus\{v_5\})$ and so $w(S)\le 2\alpha+n\epsilon_5 < 2\alpha+1$,  but the component containing $V_2\cup V_3$ of $G-S$ has weight at least $2\alpha+1$.
\end{proof}

\subsection{Graphs contractible to $K_{m,n}$}

In this subsection, we add one more observation on a contractible structure of a connected graph not in $\G$.

\begin{lem}\label{prop:contractible:Kmn}
Let $G$  be a connected graph contractible to $K_{m,n}$, where $m\neq n$ and $m,n\ge 2$, such that there is at most one bag $Z$ with $|Z|\ge 2$.
If $Z$ is connected, then $G\not\in\G$.
\end{lem}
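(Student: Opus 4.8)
The plan is to exhibit a single nonnegative weight function $w$ on $V(G)$ for which $\s(G,w)<\cs(G,w)$, mimicking the construction used for $K_{2,3}$-type graphs but now adapted to the bipartition $(A,B)$ of $K_{m,n}$ with $|A|=m<n=|B|$. Write $a_1,\dots,a_m$ for the vertices on the small side and $b_1,\dots,b_n$ for the vertices on the large side of $K_{m,n}$, and let $V_x$ denote the bag over vertex $x$; by hypothesis all bags are singletons except possibly one connected bag $Z$. First I would put almost all of the weight on the large side $B$: assign each $b_j$ (more precisely, the bag $V_{b_j}$, distributing weight within $Z$ if $Z$ lies over some $b_j$, keeping $Z$'s internal vertices tiny) a weight roughly $\alpha$, and each $a_i$ a weight slightly above $\alpha$ — say $\alpha+1$ — so that a safe set is forced to ``take'' vertices from the large side. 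The key point is that the bag $Z$, being connected, lets us choose a connected safe set structure on one side, but a minimum safe set will be allowed to be disconnected across the bipartition and will therefore be cheaper.

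Concretely, I expect the minimum safe set to be something like ``all of the small side $A$ together with the bag $Z$'' (or a carefully chosen subset), whose weight is about $m(\alpha+1)$ plus the negligible internal weight of $Z$, whereas any \emph{connected} safe set must, because $G$ is contractible to $K_{m,n}$ and $m\neq n$, pick up enough additional weight to exceed that — the asymmetry $m\neq n$ is exactly what prevents a balanced connected split. The second step is then the verification: (a) check that the proposed $S$ is genuinely a safe set, i.e. for every component $C$ of $G[S]$ and every adjacent component $D$ of $G-S$, $w(C)\ge w(D)$ — here the tiny $\epsilon$-weights on the non-distinguished vertices of the bags and on $Z$'s interior must be tuned (as in Lemmas~\ref{prop:contractible:path}--\ref{prop:contractible:K23}, choose a descending chain $\epsilon_i$ small relative to $1$ and to each other) so the inequalities hold strictly where needed; and (b) show every \emph{connected} safe set $S'$ has $w(S')>w(S)$, by a case analysis on how many vertices of the small side $S'$ contains and using connectivity to force $S'$ to also grab large-side vertices or interior bag vertices whose combined weight pushes $w(S')$ above the threshold.

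The main obstacle will be step (b): ruling out \emph{all} cheap connected safe sets. Unlike the $K_{2,3}$ lemmas, here $m$ and $n$ are unbounded, so the case analysis must be handled uniformly. I would organize it by the quantity $t=|S'\cap A|$ (number of small-side representatives in $S'$). If $t=0$ then $G-S'$ contains a huge component (all of $A$ plus its large-side neighbours), contradicting the safe-set condition once the weights on $A$ total more than $w(S')$. If $0<t<m$, connectivity of $G[S']$ (noting $G$ is contractible to a \emph{complete} bipartite graph, so any two bags on opposite sides are adjacent) forces $S'$ to meet the large side, and one shows the cheapest such $S'$ still costs more than $w(S)=m(\alpha+1)+o(1)$ because each large-side vertex costs $\approx\alpha$ and one needs ``too many'' of them relative to what the disconnected set $S$ needed. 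If $t=m$, then $S'\supseteq A$, and to be connected $S'$ must include at least one large-side bag entirely or enough of $Z$; comparing with $w(S)$ and using $m\neq n$ to preclude the symmetric escape gives the strict inequality. The role of the hypothesis ``$Z$ connected'' is precisely to make the candidate $S$ (which includes $Z$) have the right component structure, so that the safe-set inequalities in step (a) go through; without it $G[S]$ could fragment $Z$ and spoil the bound. I would close by noting $\s(G,w)\le w(S)<\cs(G,w)$, hence $G\notin\G$.
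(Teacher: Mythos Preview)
Your overall strategy---exhibit a single weight witnessing $\s(G,w)<\cs(G,w)$---matches the paper's, but your execution diverges from it in a way that both creates extra work and leaves real gaps.

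The paper assigns the \emph{same} weight $\alpha$ to every representative vertex (with one distinguished $z\in Z$ carrying $\alpha-\epsilon(|Z|-1)$ and the rest of $Z$ carrying $\epsilon$), so that \emph{both} partite sets are safe sets and hence $\s(G,w)\le\min\{m\alpha,n\alpha\}$. This symmetry buys a near one-line lower bound on $\cs$: for any connected safe set $S$, if the representative vertices outside $S$ all lie in one component of $G-S$, then $w(S)\ge w(V(G))-w(S)$, i.e.\ $2w(S)\ge(m+n)\alpha-\epsilon(|Z|-1)$, which already exceeds $2\min\{m,n\}\alpha$ because $m\neq n$. The residual case (the representatives outside $S$ split) is then forced down to $|m-n|=1$ and dispatched by a short argument exploiting that $S$ must cut through $Z$. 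Your asymmetric choice $w(a_i)=\alpha+1$, $w(b_j)=\alpha$ forfeits this global halving trick and commits you to the case split on $t=|S'\cap A|$, which you do not carry out for $0<t<m$.

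Two concrete problems remain in your sketch. First, your candidate ``all of the small side $A$ together with the bag $Z$'' is muddled: if $Z$ sits over a small-side vertex it is already contained in the small side; if $Z$ sits over a large-side vertex then $A\cup Z$ is \emph{connected} (every small-side singleton bag is adjacent to $Z$), so it cannot be the cheap disconnected witness you intend---the correct candidate is simply the union of the small-side bags. Second, you never treat a connected $S'$ that meets $Z$ \emph{properly} (neither containing nor avoiding it). This is exactly where the hypothesis ``$Z$ connected'' does its work in the paper and where the delicate $|m-n|=1$ subcase arises; without it your case $t=m$ is incomplete, since $S'=A\cup(\text{fragment of }Z)$ can be connected with weight only $m(\alpha+1)+O(\epsilon)$, and you have to control the resulting components of $G-S'$ inside $Z$ to finish.
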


\begin{proof}
Let $X$ and $Y$ be the partite sets of $K_{m,n}$ such that $X=\{x_1,x_2,\ldots,x_m\}$ and $Y=\{y_1,y_2,\ldots,y_n\}$.
If there is no bag $Z$ with $|Z|\ge 2$, then $G=K_{m,n}$ and then it is easy to show that for a constant weight function $w(x)=1$ , $\s(G,w)=\min\{|X|,|Y|\}<\frac{|X|+|Y|}{2} \le \cs(G,w)$, which implies that $G\not\in \G$.

Now suppose that there is a bag $Z$ with $|Z|\ge 2$.
We may assume that  $x_1\in X$ is the vertex in $K_{m,n}$ corresponding to $Z$.
For simplicity, let $X'=X\setminus \{x_1\}$.
For a sufficiently large real number $\alpha>1$,
a sufficiently small real number $\epsilon$ so that $1>\epsilon(|Z|-1)>0$, and
a fixed vertex $z\in Z$, we define a weight function $w$ on $V(G)$ as follows:
\[w(v)=\begin{cases}
\alpha-\epsilon(|Z|-1) &\text{if }v=z , \\
\epsilon &\text{if }v\in Z\setminus\{z\},\\
\alpha & \text{otherwise.} \\
\end{cases}\]
Then both $X'\cup Z$ and $Y$ are safe sets of $(G,w)$ such that $w(X'\cup Z)=m\alpha$ and $w(Y)=n\alpha$.
Suppose that $G\in\G$.
Then there is a connected safe set $S$ of $(G,w)$ with weight at most $\min\{ m\alpha, n\alpha \}$.

Firstly, suppose that the vertices in $(X'\cup\{z\}\cup Y)\setminus  S$ are in the same component of $G-S$.
Then, by the definition of a safe set,
\[w(S) \ge w( (X'\cup\{z\}\cup Y)\setminus S) \ge w(X'\cup\{z\}\cup Y) - w(S) = \alpha(m+n)-\epsilon(|Z|-1)-w(S),\]
and so
\[ 2w(S)\ge \alpha(m+n)-\epsilon(|Z|-1).\]
Thus, together with the fact that $w(S)\le  \min\{ m\alpha, n\alpha \}$, we have
\[\epsilon(|Z|-1) \ge \alpha(m+n)-2w(S) \ge \alpha(m+n)-2 \min\{ m\alpha, n\alpha \}=\alpha|m-n|>1,\] a contradiction by the choice of $\epsilon$.

Secondly, we consider the case where the vertices in $(X'\cup \{z\}\cup Y)\setminus S$ are not in the same component of $G-S$. Then the following claim holds.
\begin{clm}\label{claim:contractible:Kmn}
It holds that $X'\setminus S\neq\emptyset$ and $Y\setminus S\neq\emptyset$.
\end{clm}
\begin{proof}
Note that $S\neq Y$ and so it is clear that $Y\setminus S\neq\emptyset$.
To show that $X'\setminus S \neq \emptyset$ by contradiction, suppose that $X'\subset S$.
Since $S$ is a connected safe set of $(G,w)$,
$S\cap Y\neq \emptyset$ and so $w(S\cap Y)\ge \alpha$.
Since $m\alpha \le w(X')+w(S\cap Y) =w(S) \le m\alpha$, this implies that
$S=X'\cup\{y_i\}$ for some $y_i\in Y$.
Then, since $V(G)\setminus S=(X'\cup Z\cup Y)\setminus S=(Y\setminus\{y_i\})\cup Z$ and every vertex in $Y$ has a neighbor in $Z$,
$G-S$ has only one component $(Y\setminus\{y_i\})\cup Z$.
Then the vertices in $(X'\cup \{z\}\cup Y)\setminus S$ are in the same component of $G-S$,
a contradiction to the case assumption.
Hence, $X'\not\subset S$  and so $X'\setminus S \neq \emptyset$.
\end{proof}

Since $G[X'\cup Y]$ is isomorphic to $K_{m-1,n}$,  the vertices in $(X'\cup Y)\setminus S$ are in the same component of $G-S$ by Claim~\ref{claim:contractible:Kmn}.
By the definition of a safe set,
\[w(S) \ge w( (X'\cup Y)\setminus S) \ge w(X'\cup Y) -w(S)= \alpha(m+n-1)-\epsilon(|Z|-1)-w(S),\]
which implies that $\epsilon(|Z|-1)\ge \alpha(m+n-1)-2w(S)$.
Thus, since $w(S)\le \min\{ m\alpha, n\alpha \}$,
\[\epsilon(|Z|-1) \ge \alpha(m+n-1)-2 \min\{ m\alpha, n\alpha \}=\alpha|m-n|-\alpha.\]
If $|m-n|\ge 2$, we reach a contradiction by the choice of $\epsilon$.
Hence $|m-n|=1$.

Since $\{z\}$ and  $(X'\cup Y)\setminus S$ belong to different components in $G-S$
(by the case assumption), it implies that $z\not\in S$ and $S\cap Z\neq\emptyset$.
Then $\alpha > w(S\cap Z)>0$.
Since
$\alpha\min\{m,n\}=\min\{m\alpha,n\alpha\} \ge w(S)= w(S\cap (X'\cup Y))+w(S\cap Z) >w(S\cap (X'\cup Y)) = \alpha|S\cap (X'\cup Y)|$,
together with the fact that both $|S\cap (X'\cup Y)|$ and $\min\{m,n\}$ are integers,
it follows that  $\min\{m,n\}-1 \ge |S\cap (X'\cup Y)|$ and so
\[ \alpha(\min\{m,n\}-1 ) \ge w(S\cap (X'\cup Y)).\]
Then
\begin{eqnarray*}
  w((X'\cup Y)\setminus S) &=& w(X'\cup Y)-w(S\cap (X'\cup Y)) \\
  &\ge& \alpha(m-1+n) -\alpha(\min\{m,n\}-1)\\
 &=&\max\{m\alpha,n\alpha\}> \min\{m\alpha,n\alpha\}\ge w(S),
\end{eqnarray*}
a contradiction to the definition of a safe set.
\end{proof}

We finish the section with a corollary, which follows from Lemma~\ref{prop:contractible:Kmn} immediately.

\begin{cor}\label{cor:PropThree:closed_neighborhood}
Suppose that there is a vertex $v$ in a connected graph $G$ such that $\deg_G(v)\ge 3$, $N_G(v)$ is an independent set, every vertex in $N_G(v)$ has degree at least two, and $G-N_G[v]$ is connected. Then  $G\not\in\G$.
\end{cor}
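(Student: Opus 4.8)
The plan is to exhibit $G$ as a graph contractible to $K_{2,d}$, where $d=\deg_G(v)\ge 3$, with the unique bag of size possibly larger than one being connected, and then to invoke Lemma~\ref{prop:contractible:Kmn}.

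Write $N_G(v)=\{u_1,\dots,u_d\}$ and $W=V(G)\setminus N_G[v]$. First I would observe that each $u_i$ has a neighbor in $W$: indeed $\deg_G(u_i)\ge 2$, and since $N_G(v)$ is independent the neighbors of $u_i$ other than $v$ all lie in $W$, so at least one of them does; in particular $W\neq\emptyset$. Next consider the partition of $V(G)$ into the singletons $\{v\},\{u_1\},\dots,\{u_d\}$ together with the part $W$, and let $\phi$ be the associated surjection. I claim the contraction along $\phi$ is exactly $K_{2,d}$ with parts $\{v,W\}$ and $\{u_1,\dots,u_d\}$: the pair $\{v\},\{u_i\}$ is adjacent via the edge $vu_i$; the pair $W,\{u_i\}$ is adjacent via the neighbor of $u_i$ in $W$; the pair $\{v\},W$ is non-adjacent since the neighbors of $v$ are precisely the $u_i$; and the pairs $\{u_i\},\{u_j\}$ with $i\ne j$ are non-adjacent by independence of $N_G(v)$. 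Hence $G$ is $K_{2,d}$-contractible.

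Since $d\ge 3$ we have $2\ne d$ and $2,d\ge 2$, so $K_{2,d}$ satisfies the hypotheses of Lemma~\ref{prop:contractible:Kmn}. Among the bags of $\phi$, the only one that may contain more than one vertex is $W$, and $W$ is connected by assumption, as $G-N_G[v]$ is connected. Therefore Lemma~\ref{prop:contractible:Kmn} applies and yields $G\not\in\G$, as desired.

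There is essentially no obstacle here; the only points requiring a moment's care are that $W$ is nonempty and that every $u_i$ sends an edge into $W$, both of which are forced by the degree condition on the vertices of $N_G(v)$ together with the independence of $N_G(v)$.
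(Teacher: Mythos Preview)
Your proof is correct and follows exactly the same approach as the paper: contract $W=V(G)\setminus N_G[v]$ to a single vertex to obtain $K_{2,d}$ with $d\ge 3$, then apply Lemma~\ref{prop:contractible:Kmn}. The paper's version is terser, but your explicit verification of the adjacencies and non-adjacencies in the contraction, and of the nonemptiness of $W$, is entirely in line with what the paper leaves implicit.
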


\begin{proof}
Let $\deg_G(v)=d$ and let $Z=V(G)-N_G[v]$.
Since $N_G(v)$ is an independent set and  every vertex in $N_G(v)$ has degree at least two, this implies that $Z\neq \emptyset$.
Then contracting $Z$ into one vertex results in $K_{2,d}$ and $d\ge 3$. By Lemma~\ref{prop:contractible:Kmn}, $G \not\in\G$.
\end{proof}

\section{Dominating cliques and  the graphs in $\G$}\label{sec:CS:candidates}

In this section, we consider some chordal graphs and bipartite graphs in $\G$ having a dominating clique.
We give following observation.

\begin{lem}\label{easy:observation:dominating}
Let $G$ be a connected graph with a dominating clique $K$ such that $\s(G,w)<\cs(G,w)$ for some weight function $w$. For every minimum safe set $S$ of $(G,w)$,
the following hold.
\begin{itemize}
\item[\rm(i)]  Each of the sets $S\setminus K$, $K\setminus S$ and $S\cap K$ is nonempty.
\item[\rm(ii)] Each component of $G[S]$ is adjacent to at least two components in $G-S$.
\end{itemize}
\end{lem}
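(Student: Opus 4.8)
\textbf{Proof proposal for Lemma~\ref{easy:observation:dominating}.}

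\textbf{Proof proposal.} The plan is to dispatch~(i) by three short contradiction arguments and to prove~(ii) by focusing on the component of $G[S]$ and the component of $G-S$ that meet the dominating clique~$K$. First note that, since $\s(G,w)<\cs(G,w)$, the safe set $S$ is not connected, so $G[S]$ is disconnected, and by Lemma~\ref{easy:observation} $G-S$ is disconnected as well. For~(i): if $S\setminus K=\emptyset$ then $G[S]$ is a subgraph of the clique $G[K]$, hence connected, a contradiction; if $S\cap K=\emptyset$ then $K$, being a clique, lies in a single component of $G-S$ and dominates every other vertex of $G-S$, so $G-S$ is connected, a contradiction; and if $K\setminus S=\emptyset$ then $K\subseteq S$ lies in one component $C_0$ of $G[S]$, while any other component $C$ of $G[S]$ (which exists) contains a vertex outside $K$ whose neighbour in $K\subseteq C_0$ would join $C$ to $C_0$, contradicting that distinct components of $G[S]$ are non-adjacent.

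For~(ii), by~(i) the cliques $K\cap S$ and $K\setminus S$ are non-empty and connected; let $C_K$ be the component of $G[S]$ containing $K\cap S$ and $D_K$ the component of $G-S$ containing $K\setminus S$, so that $E_G(C_K,D_K)\neq\emptyset$ because $K$ is a clique. The key structural observation I would establish is that \emph{every component of $G[S]$ other than $C_K$ is adjacent to $D_K$, and every component of $G-S$ other than $D_K$ is adjacent to $C_K$}: a vertex of such a component lies outside $K$, hence has a neighbour in $K$, and that neighbour must lie in the ``other side'' of $K$ because distinct components of $G[S]$ (resp.\ of $G-S$) are non-adjacent. In particular $C_K$ is adjacent to \emph{every} component of $G-S$, and there are at least two of these, so~(ii) already holds for $C=C_K$.

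It then remains to handle a component $C\neq C_K$ of $G[S]$; by the observation $C$ is adjacent to $D_K$, and I would assume for contradiction that $D_K$ is its only neighbour among the components of $G-S$. The observation also gives that $G[V(G)\setminus C]$ is connected (everything reaches $D_K$ through $C_K$). If $w(C)\ge w(G)/2$, then $G-C$ is a single component of weight $w(G)-w(C)\le w(C)$, so $C$ is itself a connected safe set; since $G[S]$ has at least two positively weighted components, $w(C)<w(S)=\s(G,w)$, contradicting $\s(G,w)<\cs(G,w)$. Otherwise $w(G)>2w(C)$, and the plan is to form $S^{*}=(S\setminus C)\cup D_K\cup\bigcup_{i\in J}D_i$, where $D_1,\dots,D_t$ are the components of $G-S$ other than $D_K$ and $J$ is chosen greedily so that $\sum_{i\in J}w(D_i)$ lands in the window $[\,2w(C)-w(S)-w(D_K),\ w(C)-w(D_K)\,]$; this window is non-empty since $w(S)>w(C)$ and $w(C)\ge w(D_K)$ (safety of $S$), and the greedy step succeeds because each $D_i$ is adjacent to some component $C_i$ of $G[S]$, necessarily $C_i\neq C$ as $C\not\sim D_i$, whence $w(D_i)\le w(C_i)\le w(S)-w(C)$, while $\sum_i w(D_i)=w(G)-w(S)-w(D_K)$ exceeds the left endpoint. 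One then checks that $S^{*}$ is connected (every component of $G[S]$ but $C$ meets $D_K$, and each $D_i$, $i\in J$, meets $C_i\subseteq S\setminus C$), that the components of $G-S^{*}$ are exactly $C$ and the $D_i$ with $i\notin J$, and that the window inequalities yield $w(S^{*})\ge w(C)$ and $w(S^{*})\ge w(C_i)\ge w(D_i)$, so $S^{*}$ is a safe set with $w(S^{*})\le w(S)=\s(G,w)$; hence $\cs(G,w)\le w(S^{*})\le\s(G,w)$, a contradiction.

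The routine parts are the verifications that $S^{*}$ is connected and safe with the claimed complement, and the greedy estimate. The genuinely delicate point is the intermediate range of $w(C)$: there neither $S\setminus C$ nor the naive connected extension $(S\setminus C)\cup D_K$ is a safe set (the leftover component $C$ is too heavy for the latter), and one is forced to add back just enough of the remaining components of $G-S$ to make $C$ admissible without inflating the total weight past $\s(G,w)$ — which is precisely what the weight-window choice of $J$ achieves, the case split $w(C)\ge w(G)/2$ versus $w(G)>2w(C)$ being what guarantees one of the two constructions applies.
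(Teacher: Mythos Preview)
Your proof is correct and follows essentially the same line as the paper's. Both dispatch~(i) by the same three contradictions, observe that the component $C_K$ containing $K\cap S$ meets every component of $G-S$ while every other component of $G[S]$ meets $D_K$, and for a component $C\neq C_K$ adjacent only to $D_K$ build a connected safe set by taking $(S\setminus C)\cup D_K$ and then greedily adjoining further components $D_i$ of $G-S$ until $C$ becomes admissible; the paper phrases this as ``take the smallest $m$ with $w(C)\le w(S')+w(T_2)+\cdots+w(T_m)$'' whereas you phrase it as landing in the weight window $[\,2w(C)-w(S)-w(D_K),\,w(C)-w(D_K)\,]$, and both handle the extreme case $w(C)\ge w(G)/2$ by noting that $C$ itself is a connected safe set since $G-C$ is connected.
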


\begin{proof}
Since we have $s(G, w)<cs(G,w)$, note that $G[S]$ is disconnected.
If $S\subset K$ or $K\subset S$ then by the fact that $K$ is a dominating clique, $G[S]$  is connected, a contradiction. Thus $S\setminus K\neq \emptyset$ and  $K\setminus S \neq \emptyset$.
Arguing similarly, we see that, if $K\subset V(G)\setminus S$, then $G-S$ is connected, a contradiction by Lemma~\ref{easy:observation}.
Thus, $K\cap S\neq \emptyset$, and therefore (i) holds.

Let $D_1$,  $\ldots$, $D_k$ ($k\ge 2$) be the components of $G[S]$, and assume that $D_1$ is
the  component  containing $K\cap S$.
Let $T_1$, $\ldots$, $T_l$ ($l\ge 2$) be the components of $G-S$, and assume that $T_1$ is the  component  containing $K\setminus S$.
Note that each $D_i$ is adjacent to $T_1$ and each $T_j$ is adjacent to $D_1$ by the definition of a dominating clique, and so for each $i$ and $j$,
\begin{eqnarray}\label{eqref:dominating:clique0}
&&  w(T_1)\le w(D_i) \qquad \text{ and } \qquad w(T_j)\le w(D_1) .
\end{eqnarray}
To show (ii) by contradiction, suppose that there is a component $D_i$ of $G[S]$ that is adjacent to only one component of $G-S$. Then $D_i$ is adjacent to only $T_1$ among all $T_j$'s.
Without loss of generality, we may assume that $i=2$. Let $S'=(S\setminus D_2)\cup T_1$.
Since $K\subset S'$, it follows that $G[S']$ is connected.
In addition, by~\eqref{eqref:dominating:clique0}, $w(S')=w(S)-w(D_2)+w(T_1) \le w(S)$.
Note that the components of $G-S'$ are  $T_2$, $\ldots$, $T_l$, and $D_2$. For a component $T_j$ of $G-S'$,   $w(T_j)\le w(D_1) \le w(S')$ by \eqref{eqref:dominating:clique0}.
If $w(D_2)\le w(S')$, then $S'$ is a connected safe set with weight at most $w(S)$, a contradiction. Thus
\begin{eqnarray}\label{eqref:dominating:clique}
&&  w(S')<w(D_2).
\end{eqnarray}
Suppose that $w(D_2)>w(S')+w(T_2)+\cdots +w(T_l)=w(V(G)\setminus D_2)$.
Since $G-D_2$ is connected, then $D_2$ is a connected safe set of $(G,w)$ and $w(D_2)<w(S)$, a contradiction.
Thus $w(D_2)\le w(S')+w(T_2)+\cdots +w(T_l)$.
We take the smallest integer $m$ with $2\le m\le l$ such that $w(D_2)\le w(S')+w(T_2)+\cdots +w(T_m)$.
Let $S''=S'\cup T_2\cup \ldots \cup T_m$.
Note that the components of $G-S''$ are $D_2$ and some $T_{j}$'s, where $j>m$.
Then clearly, by the choice of $m$, $w(D_2) \le w(S'')$.
By \eqref{eqref:dominating:clique0}, $w(T_j)\le w(D_1)\le w(S'')$ for all $j>m$.
Hence, $S''$ is a connected safe set of $(G,w)$.

If $m=2$, then $w(S'')=w(S')+w(T_2)< w(D_2)+w(D_1) \le w(S)$ where the first inequality follows from \eqref{eqref:dominating:clique0} and  \eqref{eqref:dominating:clique}.
If $m\ge 3$, then
\[w(S'')=(w(S')+w(T_2)+\cdots w(T_{m-1})) + w(T_m) \le w(D_2)+w(D_1)\le w(S),\]
where the first inequality follows from the choice of $m$ and \eqref{eqref:dominating:clique0}. Then $S''$ is a connected safe set of $(G,w)$, a contradiction.
\end{proof}

\subsection{Chordal graphs: Proof of Theorem~\ref{thm:chordal}}\label{subsec:chordal}

In this subsection, we show that the existence of a dominating clique in a chordal graph $G$ implies $G\in \G$ and the converse is also true.
The following are two known results on chordal graphs.

\begin{thm}[\cite{chordal}]\label{thm:cite:chordal}
Every connected chordal graph $G$ can be contracted to a path of length $\mathrm{diam}(G)$ so that each bag is connected.
\end{thm}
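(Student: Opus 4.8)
The plan is to prove the statement by induction, with the two standard structural features of chordal graphs doing the work: every induced subgraph of a chordal graph is chordal, and every chordal graph which is not complete has a simplicial vertex, i.e.\ a vertex whose neighbourhood is a clique. It is convenient to reformulate the conclusion: we must partition $V(G)$ into $d+1$ sets $B_0,\dots,B_d$, where $d=\diam(G)$, so that each $G[B_i]$ is connected, $E_G(B_i,B_{i+1})\neq\emptyset$ for $0\le i<d$, and $E_G(B_i,B_j)=\emptyset$ whenever $|i-j|\ge 2$. The base cases are immediate: if $|V(G)|=1$ take $B_0=V(G)$; if $G$ is complete with $|V(G)|\ge 2$ then $d=1$ and any partition of $V(G)$ into two nonempty parts works, each part being a clique and hence connected.

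For a non-complete $G$ I would first set up the natural candidate and then repair it. Fix a diametral pair $s,t$ (so $\dist_G(s,t)=d$) and let $L_i=\{v:\dist_G(s,v)=i\}$ for $0\le i\le d$. These layers are nonempty, partition $V(G)$, satisfy $E_G(L_i,L_j)=\emptyset$ for $|i-j|\ge 2$ because distances are realised by paths passing through consecutive layers, and satisfy $E_G(L_i,L_{i+1})\neq\emptyset$ for $i<d$ by connectivity of $G$. Thus $\{L_0,\dots,L_d\}$ already exhibits $G$ as contractible onto a path of length $d$; the single obstruction is that a bag $G[L_i]$ may be disconnected. The idea for removing this obstruction is to push offending pieces one level towards $s$: if $C$ is a connected component of $G[L_i]$ with $i\ge 1$ then $C$ has a neighbour in $L_{i-1}$, so reassigning $C$ to level $i-1$ keeps that level connected once it has been made connected; but this move is legitimate only if $C$ has no neighbour in $L_{i+1}$, as otherwise levels $i-1$ and $i+1$ would become adjacent. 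I would carry this out from the outermost layer inwards and use the chordality hypothesis --- concretely, that a minimal vertex separator of a chordal graph is a clique, applied to the separator contained in $L_i$ between $L_0\cup\dots\cup L_{i-1}$ and $L_{i+1}\cup\dots\cup L_d$ --- to show that the ``bipartite component pattern'' between consecutive layers is rigid enough for every disconnected layer to be repaired this way without ever creating a skip edge and without emptying a layer (which cannot happen, since $t\in L_d$ keeps the length equal to $d$).

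An equivalent and arguably cleaner route, matching the base case above, is induction on $|V(G)|$: delete a simplicial vertex $v$. Because $N_G(v)$ is a clique, $v$ is not a cut vertex and no shortest path uses $v$, so $\dist_{G-v}=\dist_G$ on $V(G)\setminus\{v\}$, whence $G-v$ is connected and chordal with $\diam(G-v)\in\{d-1,d\}$; apply induction and then reinsert $v$. Since $N_G(v)$ is a clique, its image under the inductive contraction is a clique in a path, hence a single bag $B_j$ or two consecutive bags $B_j,B_{j+1}$, and in either case $v$ has a neighbour in $B_j$, so $v$ can be absorbed into $B_j$ while keeping every bag connected and creating no new bag-adjacency. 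I expect the main obstacle to be precisely the case $\diam(G-v)=d-1$: there we must still produce a path of length $d$, so we must control \emph{where} $N_G(v)$ lands --- it has to end up at an end of the inductive path --- which I would handle by strengthening the inductive hypothesis to ``for every vertex $u$ of eccentricity $d$ there is such a contraction with $\{u\}$ as an endpoint bag'' and by choosing the deleted simplicial vertex so as to avoid (or, in the exceptional configurations, to sit at an end of) the relevant diametral path. The remaining verifications --- nonemptiness of each final bag, the adjacency ``if and only if'', and connectivity of each initial segment $L_0\cup\dots\cup L_i$ --- are routine and I would dispatch them directly.
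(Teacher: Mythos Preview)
The paper does not prove this statement at all: it is quoted verbatim from Heggernes, van~'t~Hof, L\'ev\^eque and Paul (reference \cite{chordal}) and used as a black box in the proof of Theorem~\ref{thm:chordal}. So there is nothing in the paper to compare your argument against; what follows is an assessment of your sketch on its own merits.

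Your BFS-layer approach is the one that is closest to a proof, and the key idea you name --- that every minimal vertex separator of a chordal graph is a clique --- is exactly the right tool. Once you fix a diametral pair $(s,t)$ and the layers $L_0,\dots,L_d$, the minimal $(s,t)$-separator $S_i$ contained in $L_i$ is a clique, hence lies in a \emph{single} connected component of $G[L_i]$; every other component of $G[L_i]$ is separated from $L_{i+1}\cup\dots\cup L_d$ by $S_i$ and therefore has no forward neighbour, so it may be pushed into $L_{i-1}$. What is missing from your write-up is the verification that this push-back procedure, carried out from $i=d$ down to $i=1$, remains consistent: when you process $L_{i}$ it already contains material pushed down from $L_{i+1}$, and you must check that (a) the new $L_i$ still separates $s$ from the new $L_{i+1}$, so that a minimal separator $S_i\subseteq L_i$ still exists, and (b) every component of the new $G[L_i]$ other than the one containing $S_i$ still has a neighbour in $L_{i-1}$ (so that the push-back keeps the next bag connected). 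Both points are true, but they are not ``routine'' in the sense you suggest; you should spell them out. Your parenthetical rule ``push back if no forward neighbour'' also mis-handles the top layer $L_d$, where \emph{no} component has a forward neighbour; the correct rule is ``keep only the component containing $t$ (for $i=d$) or the component containing $S_i$ (for $i<d$)''.

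Your alternative simplicial-deletion induction is clean when $\diam(G-v)=d$, but the case $\diam(G-v)=d-1$ is a genuine gap, and your proposed fix does not close it. Strengthening the hypothesis to ``some contraction has $\{u\}$ as an endpoint bag for each $u$ of eccentricity $d$'' does not by itself force $N_G(v)$ into an endpoint bag: knowing that $\dist(u,w)=d-1$ for every $w\in N_G(v)$ only gives $w\in B_j$ with $j\le d-1$, not $j=d-1$, because bag index is a lower bound for distance from $B_0$, not an upper bound. One can rescue the induction, but it requires a further strengthening (for instance, controlling \emph{both} endpoint bags simultaneously, or proving that one may always choose a simplicial vertex whose removal does not decrease the diameter unless $G$ is a path), and that is real additional work rather than a detail. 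If you want a self-contained proof, I would recommend completing the BFS argument instead.
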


\begin{thm}[\cite{chordal:dominating}]\label{thm:cite:chordal:dominating}
A connected chordal graph $G$ has a dominating clique if and only if $\diam(G)\le 3$.
\end{thm}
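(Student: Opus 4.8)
The plan is to prove the two implications separately, using chordality only for the harder ``$\diam(G)\le 3\Rightarrow$ dominating clique'' direction.

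For the easy direction, suppose $K$ is a dominating clique and take any two vertices $x,y\in V(G)$. Since $K$ dominates, $x$ equals or is adjacent to some $a\in K$, and $y$ equals or is adjacent to some $b\in K$; moreover $a,b$ are equal or adjacent because $K$ is a clique. Hence $x,a,b,y$ is a walk of length at most $3$, so $\dist(x,y)\le 3$. This argument uses no structural hypothesis, so $\diam(G)\le 3$ holds for \emph{every} graph carrying a dominating clique; chordality is not needed here.

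For the converse I would first dispose of the small-diameter case. If $\diam(G)\le 2$, pick a simplicial vertex $v$ (which exists in any chordal graph); then $N_G[v]$ is a clique, and since every vertex lies within distance $2$ of $v$ it equals or is adjacent to some neighbor of $v$, so $N_G[v]$ is a dominating clique. The real content is the case $\diam(G)=3$, where the dominating clique must sit ``in the middle'' and no closed neighborhood of a single vertex will do. Here I would locate a central clique and verify domination. Concretely, take $u,v$ with $\dist(u,v)=3$ and a shortest path $u\,a\,b\,v$; as an aid, Theorem~\ref{thm:cite:chordal} lets one pin the center down, contracting $G$ to $v_0v_1v_2v_3$ with connected bags and choosing $a,b$ to be the endpoints of an edge between the two middle bags. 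Let $K$ be a maximal clique containing the central edge $ab$; the claim is that $K$ dominates $G$. (Alternatively one may run the same analysis on a minimal $u$--$v$ separator, which is a clique by chordality.)

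To establish the claim I would argue by contradiction: if some $w$ is neither in $K$ nor adjacent to $K$, then in particular $w\not\sim a$ and $w\not\sim b$, and combining $\dist(w,u)\le 3$, $\dist(w,v)\le 3$ with the chordless-cycle condition should force either two vertices at distance at least $4$ or a forbidden induced cycle of length at least $4$. The main obstacle is exactly this verification. Because the diameter is \emph{odd}, a central minimal $u$--$v$ separator is never symmetrically placed — its closest vertex reaches $u$ and $v$ at distances summing to $3$ — so a single separator may fail to dominate the far side, and one must instead track carefully how a hypothetical undominated $w$ attaches to $u,v,a,b$ and their neighborhoods. Carrying out this case analysis cleanly is the crux, and it is precisely where chordality is indispensable: the triangle-free analogue is false (for instance $C_6$ has diameter $3$ but, being triangle-free, its only cliques are edges, none of which dominate all six vertices), so the proof must genuinely exploit the absence of long induced cycles rather than the diameter bound alone.
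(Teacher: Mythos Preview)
First, a framing point: the paper does \emph{not} prove Theorem~\ref{thm:cite:chordal:dominating}. It is quoted from Kratsch--Damaschke--Lubiw~\cite{chordal:dominating} and used as a black box in the proof of Theorem~\ref{thm:chordal}. So there is no ``paper's own proof'' to compare against; what you have written is an attempt to supply an argument the paper deliberately outsources.

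On the substance of your attempt: the forward direction and the $\diam(G)\le 2$ subcase are fine. The $\diam(G)=3$ case, however, is not a proof but a plan, and you say so yourself: ``carrying out this case analysis cleanly is the crux.'' You never verify that a maximal clique $K\supseteq\{a,b\}$ (or a minimal $u$--$v$ separator) actually dominates; you only assert that a contradiction ``should'' arise from an undominated $w$. That is the entire content of the theorem in this direction, so as written there is a genuine gap.

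Two remarks if you want to close it. First, the choice of $K$ matters: in a chordal graph the edge $ab$ can lie in several maximal cliques, and you have given no reason why an \emph{arbitrary} one dominates, nor a rule for selecting the right one. Second, your alternative via a minimal $u$--$v$ separator is more promising, since in chordal graphs such separators are cliques; but, as you note, a single separator sits asymmetrically on an odd-length geodesic, and you still owe the argument that anything not dominated would either push the diameter above $3$ or create an induced cycle of length $\ge 4$. Until one of these routes is actually executed, the proposal remains a sketch rather than a proof.
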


Now we are ready to prove Theorem~\ref{thm:chordal}.

\begin{proof}[Proof of Theorem~\ref{thm:chordal}] By Theorem~\ref{thm:cite:chordal:dominating}, it remains to show that (ii) and (iii) are equivalent.
Suppose that  $\diam(G)\ge 4$.
Then by Theorem~\ref{thm:cite:chordal}, $G$ is contractible to a path of length at least four so that each bag is connected. Let $V_1$, $V_2$, $\ldots$, $V_d$ be the connected bags corresponding to that path, where $d= \diam(G)+1\ge 5$.
By considering the partition with
$V_1$, $V_2$, $V_3$, $V_4$, $V(G)\setminus (V_1\cup \cdots \cup V_4)$, we can see that
$G$ is contractible to a path of length exactly four so that each bag is connected and therefore $G\not\in \G$  by Lemma~\ref{prop:contractible:path}. Thus, (iii) implies (ii).

To show that (ii) implies (iii), suppose that  $\diam(G)\le 3$. By Theorem~\ref{thm:cite:chordal:dominating}, there is a dominating clique $K$ of $G$.
To reach a contradiction, suppose that $G\not\in \G$. Then there is a weight function $w$ on $V(G)$ such that $\s(G,w)<\cs(G,w)$.
Let $S$ be a minimum safe set of $(G,w)$.
By Lemma~\ref{easy:observation:dominating}~(i),
 $S\setminus K \neq \emptyset$, $K\setminus S \neq \emptyset$ and $S\cap K \neq \emptyset$.
Let $D_1$, $D_2$, \ldots, $D_k$ be the components of $G[S]$ and $T_1$, $T_2$, $\ldots$,
$T_l$ be the components of $G-S$.
We assume that $D_1$ contains $S\cap K$ and $T_1$ contains $K\setminus S$.

If $E_G(D_i,T_j)\neq \emptyset$ for some $i,j\ge 2$, then the union of $D_i$, $T_1$, $D_1$, $T_j$ contains a cycle and its shortest cycle  is an induced cycle of length at least four, a contradiction to the fact that $G$ is chordal.
Thus for each $i\in\{2,\ldots,k\}$, $D_i$ is adjacent to only $T_1$ among all $T_j$'s, which is a contradiction to Lemma~\ref{easy:observation:dominating} (ii).
\end{proof}

Together with Theorem~\ref{thm:chordal}, the following corollary holds immediately.

\begin{cor}\label{cor:tree}
For a tree $T$, $T\in \G$ if and only if $T$ is a double star.
\end{cor}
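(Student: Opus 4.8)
The plan is to derive this directly from Theorem~\ref{thm:chordal}. First I would observe that every tree is a chordal graph: a tree contains no cycles whatsoever, so in particular it has no induced cycle of length at least four, which is exactly the defining property of chordality. Since a tree is also connected by definition, Theorem~\ref{thm:chordal} applies verbatim to $T$, and it yields that $T\in\G$ if and only if $\diam(T)\le 3$ (equivalently, if and only if $T$ has a dominating clique).

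Next I would translate the diameter condition into the language of double stars. By the definition adopted in this paper, a double star is precisely a tree of diameter at most three. Hence $\diam(T)\le 3$ holds if and only if $T$ is a double star, and combining this equivalence with the one coming from Theorem~\ref{thm:chordal} gives $T\in\G$ if and only if $T$ is a double star, as claimed.

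There is essentially no obstacle here; the only point worth a sentence of care is that the equivalence between ``diameter at most three'' and ``double star'' is a matter of definition rather than something requiring argument, and that the degenerate trees $K_1$, $K_2$, and the stars $K_{1,n}$ (of diameter $0$, $1$, and $2$ respectively) are all subsumed by that definition. If one wished to avoid quoting the chordal-graph machinery, an alternative would be to check directly that a double star has a dominating edge (its central edge, or a single vertex in the degenerate cases) and invoke the dominating-clique results of Section~\ref{sec:CS:candidates} for the ``if'' direction, together with Lemma~\ref{prop:contractible:path} (a tree of diameter at least four is $P_5$-contractible) for the ``only if'' direction; but since trees are chordal, routing through Theorem~\ref{thm:chordal} is the cleanest route and is the one I would take.
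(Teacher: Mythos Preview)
Your proposal is correct and matches the paper's own approach exactly: the paper states that the corollary ``holds immediately'' from Theorem~\ref{thm:chordal}, relying on precisely the observations you make (trees are chordal, and a double star is by definition a tree of diameter at most three).
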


 In view of Lemma~\ref{prop:contractible:path}, it is easy to check that a path $P_n$ is in $\G$ if and only if $n\le 4$.

\subsection{Bipartite graphs}\label{subsec:bipartite}

In this subsection, we shall investigate the structure of bipartite graphs in $\G$.
First, we show that every book graph belongs to $\G$.

\begin{prop}\label{prop:book:graph}
For a positive integer $m$, the $m$-book graph $B_m$ belongs to $\G$.
\end{prop}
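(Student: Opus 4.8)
The plan is to show directly that for every nonnegative weight function $w$ on $V(B_m)$ (using Proposition~\ref{prop:=<}, it suffices to treat nonnegative weights), a minimum safe set of $(B_m,w)$ can be converted into a connected safe set of no larger weight. Write $B_m = K_{1,m} \times P_2$, so $V(B_m)=\{a,b\}\cup\{x_1,\dots,x_m\}\cup\{y_1,\dots,y_m\}$, where $a$ is adjacent to $b$ and to every $x_i$, $b$ is adjacent to every $y_i$, and $x_iy_i\in E(B_m)$ for each $i$. The edge $ab$ is a dominating edge (a dominating clique of order two). Suppose toward a contradiction that $\s(B_m,w)<\cs(B_m,w)$, let $S$ be a minimum safe set, and apply Lemma~\ref{easy:observation:dominating}: $S\setminus\{a,b\}$, $\{a,b\}\setminus S$, and $S\cap\{a,b\}$ are all nonempty, so exactly one of $a,b$ lies in $S$; by symmetry say $a\in S$, $b\notin S$. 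Moreover every component of $B_m[S]$ meets at least two components of $B_m-S$.

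Next I would analyze the component structure. Since $a\in S$, the component $D_1$ of $B_m[S]$ containing $a$ consists of $a$ together with all $x_i\in S$, plus any $y_i$ with $x_i\in S$ and $y_i\in S$; note $b\notin S$. Any other component $D$ of $B_m[S]$ is a single vertex $x_j$ with $x_j\notin S$ — impossible — or a single vertex $y_j$ with $x_j\notin S$, or an edge $x_jy_j$ with $a\notin$ it — but $x_j$ is only adjacent to $a$ and $y_j$, so if $x_j\in S$ then $a\in S$ puts $x_j$ in $D_1$. Hence every component of $B_m[S]$ other than $D_1$ is an isolated vertex $y_j$ with $x_j\notin S$. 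But such a $y_j$ is adjacent only to $b$ and $x_j$, both outside $S$, so $\{y_j\}$ meets only the component of $B_m-S$ containing $b$ — contradicting part (ii) of Lemma~\ref{easy:observation:dominating} unless $B_m[S]$ has just one component, i.e.\ unless $B_m[S]$ is connected, contradicting $\s(B_m,w)<\cs(B_m,w)$.

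I expect the above to essentially close the argument, but the step I would be most careful about is the verification that no component of $B_m[S]$ other than $D_1$ can be an edge or larger: this rests on the very restricted neighborhoods of the $x_i$ and $y_i$ in a book graph, so I would lay out the adjacency structure explicitly and check each possible shape of a component. A minor secondary point is handling the symmetric case $b\in S$, $a\notin S$, which is identical after swapping the roles of $\{x_i\}$ and $\{y_i\}$. Altogether the proof reduces to the structural observations of Lemma~\ref{easy:observation:dominating} applied to the dominating edge $ab$, plus the remark that in $B_m$ every vertex outside $\{a,b\}$ has degree exactly two with one neighbor in $\{a,b\}$, which forces any component of $B_m[S]$ avoiding $\{a,b\}$ to be a single pendant-like vertex adjacent to only one side.
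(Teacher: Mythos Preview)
Your argument breaks at the application of Lemma~\ref{easy:observation:dominating}(ii). With $a\in S$, $b\notin S$, you correctly observe that any component of $B_m[S]$ other than $D_1$ is a singleton $\{y_j\}$ with $x_j\notin S$. But then you assert that $\{y_j\}$ is adjacent to only one component of $B_m-S$, namely the one containing $b$. This is false: the vertex $x_j$ has neighbors $a$ and $y_j$, both of which lie in $S$, so $\{x_j\}$ is an \emph{isolated} component of $B_m-S$, distinct from the component containing $b$. Hence $\{y_j\}$ is adjacent to two components of $B_m-S$ (the component through $b$ and the singleton $\{x_j\}$), and Lemma~\ref{easy:observation:dominating}(ii) yields no contradiction.

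The paper's proof proceeds differently after reaching the same structural description (components of $G[S]$ other than $D_1$ are singletons in the partite set of $a$). Rather than invoking part~(ii) of the lemma, it explicitly builds two connected candidates: $S_x = D_1 \cup \{x_j : \{y_j\}\text{ is a component of }G[S]\}$ and its complement $S_y$. Using the safe-set inequalities $w(y_j)\ge w(x_j)$ (since $\{x_j\}$ is a component of $G-S$ adjacent to $\{y_j\}$) and $w(D_1)\ge w(D_b)$ (where $D_b$ is the component of $G-S$ containing $b$), one gets $w(S)\ge w(S_x)$ and $w(S)\ge w(S_y)$. Since $S_x$ and $S_y$ partition $V(B_m)$ and both induce connected subgraphs, one of them is a connected safe set of weight at most $w(S)$, giving the contradiction. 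You will need this swap-and-compare step (or something equivalent) to close the argument.
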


\begin{proof}
Let $(X,Y)$ be the bipartition of  $B_m$, and $xy$ be a dominating edge, $x\in X$ and $y\in Y$.
Suppose that $G\not\in\G$.
Then there is a weight function $w$ on $V(G)$ such that $\s(G,w) <\cs (G,w)$.
Take a minimum safe set $S$ of $(G,w)$.
By Lemma~\ref{easy:observation:dominating}~(i), we may assume that
$S\cap\{x,y\}=\{x\}$.
Let $D_x$ be the component of $G[S]$ containing $x$ and let $D_y$ be the
component of $G-S$ containing $y$.
Note that all of the components of $G[S]$ other than $D_x$ are isolated vertices in the set $X\setminus\{x\}$.
Let $S\setminus D_x=\{x_1,\ldots,x_l\}$ and let $\{y_i\}=N_G(x_i)\setminus \{y\}$ for each $i\in\{1,\ldots,l\}$.
Then $\{y_1,\ldots,y_l\}\cap S =\emptyset$.
Let us define $S_x=D_x \cup \{y_1,\ldots,y_l\}$
and $S_y=V(G)\setminus S_x$. Note that $S_y=D_y\cup\{x_1,\ldots,x_l\}$.
Then by the definition of a safe set,
\begin{eqnarray*}
w(S)&=&w(D_x) + w(x_1)+\ldots+w(x_l)\ge w(D_x)+w(y_1)+\ldots +w(y_l)=w(S_x)\\
w(S)&=&w(D_x) + w(x_1)+\ldots+w(x_l)\ge w(D_y)+w(x_1)+\ldots+w(x_l)=w(S_y)
\end{eqnarray*}
and so $w(S) \geq \max\{w(S_x), w(S_y)\}$.
Since both $G[S_x]$ and $G[S_y]$
are connected  and $V(G)$ is a disjoint union of $S_x$ and $S_y$,
at least one of $S_x$ and $S_y$ must be a connected
safe set of $(G,w)$ whose weight is at most $w(S)$, which is a
contradiction.
\end{proof}

In the following, we characterize all graphs $D(m,n;p,q)$ or $D^*(m,n;p,q)$ (see Definition~\ref{def:graph:D:D*}) in $\G$.
%
%

Note that a double star with at least two vertices is $D({0,0};p,q)$ for some $p,q$, and $K_{3,3}$ minus an edge is equal to $D({1,1};0,0)$.
In addition, $D(1,0;0,0)=D^*(1,0;0,0)=C_4=B_1$ and $D^*(1,1;0,0)=B_2$.
Hence, the following proposition shows that the graphs described in (II), (IV), or (V) of Theorem~\ref{main:thm:bipartite:CS} are in $\G$.

\begin{prop}\label{prop:case:(5)}
For nonnegative integers $m$, $n$, $p$ and $q$ with $m\ge n$,
let $G$ be a graph either  $D(m,n;p,q)$ or $D^*({m,n};{p,q})$.
Then $G$ belongs to $\G$ if and only if one of the followings holds:
{\rm(a)} $m,n\ge 2$;  {\rm(b)} $m\neq 1$ and $n=0$;
{\rm(c)} $(m,n;p,q)=(1,1;0,0)$;  {\rm(d)} $(m,n;p,q)=(1,0;0,0)$.
\end{prop}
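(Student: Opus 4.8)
The plan is to treat the two directions separately. For the easy direction, observe that the excluded cases fall into two types: first, if $m=1$ and $(m,n;p,q)$ is not $(1,1;0,0)$ or $(1,0;0,0)$, and second, if $m=n$ with some arithmetic constraint. For these ``bad'' graphs I would exhibit an explicit contraction witnessing that $G\notin\G$, using the lemmas of Section~\ref{sec:contraction}. Concretely, when $m=1$ and $(n,p,q)$ is large enough that we are outside cases (c) and (d), the graph $D(1,n;p,q)$ or $D^*(1,n;p,q)$ contracts onto one of $H_1,H_2,H_3$ (or onto $K_{2,d}$ with $d\ge 3$) with the bags for $u_2,u_4$ connected: the single-vertex side $X_1=\{x\}$ together with $Y_1$ plays the role of $V_2$, the block $X_2\cup Y_2$ together with $Q$ plays the role of one endpoint bag, and $P$ plus the rest plays the role of $V_5$. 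Then Lemma~\ref{prop:contractible:path}, \ref{prop:contractible:C4+}, \ref{prop:contractible:K23}, or \ref{prop:contractible:Kmn} (whichever applies after checking the edge-count or connectivity hypotheses) gives $G\notin\G$. I would also handle the degenerate small cases (e.g. $D(1,1;p,q)$ with $p+q\ge 1$, $D(1,0;p,q)$ with $p+q\ge1$) directly: adding a pendant vertex to $K_{3,3}-e$ or to $C_4$ lands outside $\G$ by Proposition~\ref{prop:pendant} and the remark following it, since $K_{3,3}-e$ with a pendant is itself contractible to $H_1$ or $H_2$.

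For the main (``if'') direction I must show each graph in cases (a)--(d) lies in $\G$. Cases (c) and (d) are immediate: $D(1,1;0,0)=K_{3,3}-e$ and $D(1,0;0,0)=C_4=B_1$, and $B_1\in\G$ by Proposition~\ref{prop:book:graph}, while $K_{3,3}-e$ can be checked to be $B_2$-free but I can verify it directly or note it also appears as $D^*(1,1;0,0)=B_2\in\G$. Cases (a) and (b), i.e. $m\ge2$ with $n\ge2$ or $n=0$, are the substance. Here I would argue exactly as in Proposition~\ref{prop:book:graph}: these graphs all have the dominating edge $xy$ with $x\in X_2$, $y\in Y_1$. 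Assume for contradiction $\s(G,w)<\cs(G,w)$ and take a minimum safe set $S$; by Lemma~\ref{easy:observation:dominating}(i), $|S\cap\{x,y\}|=1$, say $S\cap\{x,y\}=\{x\}$. The key structural point is that, because $E_G(X_1,Y_2)=\emptyset$ and the only ``cross'' edges are through $X_2\cup Y_1$, every component of $G[S]$ other than the one containing $x$ is a very restricted object — an isolated vertex of $X_1\cup P$, or a small star inside $X_2\cup Y_2\cup Q$ — and symmetrically for $G-S$. I would then define a balanced re-partition $S_x\cup S_y=V(G)$ with $G[S_x]$, $G[S_y]$ both connected (moving, for each ``bad'' small component $C$ of $G[S]$, its neighborhood into $C$'s side), show via the safe-set inequalities that $w(S)\ge\max\{w(S_x),w(S_y)\}$, and conclude that one of $S_x,S_y$ is a connected safe set of weight $\le w(S)$, a contradiction.

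The main obstacle is making the re-partition argument of the last paragraph uniform across all of $D(m,n;p,q)$ and $D^*(m,n;p,q)$ with $m\ge2$: unlike the book graph, here $G[S]$ can have components that are not single vertices (small stars inside the complete bipartite blocks, or pendant-carrying vertices), and when $S$ cuts through the block $X_2\cup Y_1$ the adjacency pattern of $\beta(G,S)$ is genuinely richer. I expect to need a case analysis on which of $X_1,X_2,Y_1,Y_2,P,Q$ meet $S$ and on whether the component of $G[S]$ containing $x$ also contains part of $Y_1$ or $X_2$; in the $D^*$ case the ``double star with dominating edge $xy$'' structure on $X_2\cup Y_1$ must be used to rule out long cross-paths. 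A cleaner alternative, which I would try first, is to invoke Proposition~\ref{rmk:intro:contraction}: show that for \emph{every} $S\subset V(G)$ the graph $\beta(G,S)$ is itself a double star, an even cycle, or a book graph — i.e. already known to be in $\G$ — so that $G\in\G$ follows without any weight juggling. Verifying this claim about $\beta(G,S)$ for all $S$ is the step I expect to be the real work, but it replaces ad hoc inequalities by a finite structural check on the (few possible) shapes of $\beta(G,S)$.
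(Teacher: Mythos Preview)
Your proposal has two genuine gaps.

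First, a small but real error in case (c): $D(1,1;0,0)$ and $D^*(1,1;0,0)$ are different graphs. The latter is indeed $B_2$ and is covered by Proposition~\ref{prop:book:graph}, but $D(1,1;0,0)=K_{3,3}-e$ is \emph{not} a book graph and needs its own argument. The paper handles it directly: using the four dominating edges and Lemma~\ref{easy:observation:dominating}(i) one pins down that (up to symmetry) $S=\{x_1,x_2,x_3\}$, and then a single-vertex swap $x_{i^*}\leftrightarrow y_{i^*}$ produces a connected safe set of weight at most $w(S)$.

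Second, and more seriously, your ``cleaner alternative'' cannot close the argument by itself. In a bipartite graph, taking $S$ equal to one entire colour class gives $\beta(G,S)=G$: every vertex is its own component and $\beta$ recovers $G$ exactly. So $\beta(G,S)$ need not be a double star, cycle, or book graph --- it can be $G$ itself, and appealing to $\beta(G,S)\in\G$ is then circular. The paper confronts precisely this obstruction by fusing your two ideas through a \emph{minimal counterexample} device you did not mention: take $G$ smallest (first in $|V(G')|$, then in $|V(G)|$) among graphs in the list (a)--(d) with $G\notin\G$, and a minimum safe set $S$. A structural case analysis (Claim~\ref{claim:beta=G}, Subclaim~\ref{subclaim:beta}) shows $\beta(G,S)$ is again of the form $D$ or $D^*$ with parameters still in (a)--(d), or is a double star; by minimality $\beta\in\G$, contradicting Lemma~\ref{lem:intro:beta} --- \emph{unless} $\beta=G$. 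This forces $S=X$ or $S=Y$ (and $p=0$ via Lemma~\ref{easy:observation:dominating}(ii)), and only at that point is a single-vertex swap (replacing one carefully chosen $x_1\in X_1$ or $x'\in X_2$ by $y$) enough to finish. Without the minimality reduction, the case analysis on how $S$ slices through $X_1,X_2,Y_1,Y_2,P,Q$ that you anticipate is not a priori finite, and your book-graph-style repartition into $S_x,S_y$ does not obviously go through when components of $G[S]$ are nontrivial.
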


\begin{proof}
Let ($X_1\cup X_2\cup P, Y_1\cup Y_2\cup Q$) be the bipartition of $G$ (following Definition~\ref{def:graph:D:D*}).
For simplicity, let \[X'=X_1\cup X_2, \quad X=X'\cup P, \quad  Y'=Y_1\cup Y_2,\quad Y=Y'\cup Q, \quad
G'=G[X'\cup Y'].\]
See Figure~\ref{fig:graphs_sec4}.

Suppose that $m$, $n$, $p$, and $q$ satisfy none of (a)-(d).
Then either $(m,n;p,q)=(m,1;p,q)$ for some $m\ge 2$,
or $(m,n;p,q)\in \{ (1,1;p,q), (1,0;p,q)\}$ for some $p,q$ with $p>0$ or $q>0$.
Then, in each case, it is easy to see that $G$ is $H_2$-contractible for the graph $H_2$ in Figure~\ref{fig:h1h2h3} so that the bags are $V_1$, $\ldots$, $V_5$ with $|E_G(V_1,V_2)|=|E_G(V_2,V_3)|=1$, and so $G\not\in \G$ by Lemma~\ref{prop:contractible:C4+}.  More precisely,
if $m=1$ then let $V_2:=X_1$ and if $n=1$ then let $V_2:=Y_2$.
Hence the `only if' part holds.

To show the `if' part by contradiction,
suppose that one of (a)-(d) holds and $G\not\in\G$.
Suppose that we take such $G$ so that (1) $|V(G')|$ is minimum,
and (2) $|V(G)|$ is minimum subject to (1).
Then there is a weight function $w$ on $V(G)$ such that $\s(G,w) <\cs (G,w)$. Take a minimum safe set $S$ of $(G,w)$.
Let $\beta=\beta(G,S)$. Note that $\beta\not \in\G$ by Lemma~\ref{lem:intro:beta}.

\begin{clm}\label{claim:m}
It holds that $m\ge 2$.
\end{clm}
\begin{proof}
If $m=0$, then $G=D(0,0;p,q)=D^*(0,0;p,q)$ is a double star and so $G\in\G$ by Corollary~\ref{cor:tree}, a contradiction.
Now suppose that $m=1$. Then (c) or (d) holds.
Since $D(1,0;0,0)=D^*(1,0;0,0)=C_4\in \G$ by Theorem~\ref{thm:cycle}, it holds that
$(m,n;p,q)\neq (1,0;0,0)$.
Since  $D^*(1,1;0,0)=B_2\in \G$ by Proposition~\ref{prop:book:graph},
$(m,n;p,q) \neq D^*(1,1;0,0)$.
Hence, to prove the claim, it is sufficient to show that $G\neq D(1,1;0,0)$.

Suppose to the contrary that $G=D(1,1;0,0)$.
Note that $G$ is a graph obtained from $K_{3,3}$ by deleting an edge,
and let the vertices of $G$ be labeled as the graph in Figure~\ref{fig:spanning:K33}.
\begin{figure}[h!]
\centering
\includegraphics[width=3cm,page=5]{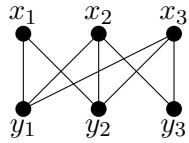}\\
    \caption{$K_{3,3}-e$, where $e=x_1y_3$.}\label{fig:spanning:K33}
    \end{figure}
Since each of $x_2y_1$, $x_2y_2$, $x_3y_1$, $x_3y_2$ is a dominating edge, we may assume that $x_2,x_3\in S$ and $y_1,y_2\not\in S$ by Lemma~\ref{easy:observation:dominating} (i).
If $y_3\in S$, then, for $G[S]$ being disconnected, $x_1\in S$ and so $S=\{x_1,x_2,x_3,y_3\}$, which implies that $\beta(G,S)$ is a cycle of length four and so $\beta\in\G$ by Theorem~\ref{thm:cycle}, a contradiction. Thus $y_3\notin S$. If $x_1\not\in S$, then $\beta(G,S)$ is a cycle of length four, again a contradiction, and therefore $x_1 \in S$.
Hence, $S=\{x_1,x_2,x_3\}$. By the definition of a safe set, $w(x_i)-w(y_i)\ge 0$ for each $i$.
Take $i^*\in\{1,2\}$ such that $w(x_{i^*})-w(y_{i^*})$ is minimum.
Let $S'=(S\setminus \{x_{i^*}\})\cup\{y_{i^*}\}$.
Then $G[S']$ is connected, and moreover,
\begin{eqnarray*}
&&w(S')-w(V(G)\setminus S') =\sum_{i=1}^{3} (w(x_i)-w (y_i))-2(w(x_{i^*})-w(y_{i^*})) \ge 0
\end{eqnarray*}
and so $S'$ is a connected safe set whose weight is not greater than $S$.
Thus  $\s(G,w)=\cs(G,w)$, which is a contradiction.
Hence, $m\ge 2$ and the claim holds.
\end{proof}

Let $xy$ be a dominating edge of $G$ where $x\in X_2$ and $y\in Y_1$.
Note that by Lemma~\ref{easy:observation:dominating} (i), $|S\cap \{x,y\}|=1$. Let $u_x$ and $u_y$ be the vertices of $\beta$ corresponding to the components of $G-S$  or $G[S]$ containing $x$ and $y$, respectively.
Hence, $u_xu_y$ is a dominating edge of $\beta.$

\begin{figure}[h!]
\centering
\includegraphics[width=8cm,page=6]{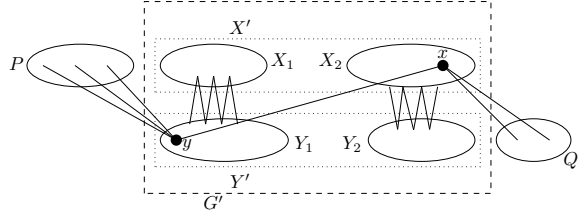}\\
\caption{A graph $G$, where $G[X_2\cup Y_1]$ induces either $K_{n+1,m+1}$ or a double star.}\label{fig:graphs_sec4}
\end{figure}

\begin{clm}\label{claim:beta=G}
It holds that $\beta=G$.
\end{clm}

\begin{proof}
Note that $m\ge 2$ by Claim~\ref{claim:m} and so one of $(a)$ or $(b)$ holds.
Suppose that $\beta\neq G$.
First we claim that some edge of $G'$ is contracted to obtain $\beta$.
If not, then $\beta=D(m,n;p',q')$  for some $p',q'$ with $p'+q'<p+q$ (satisfying the conditions (a) or (b)), which implies that $\beta\in \G$ by the minimality of $|V(G)|$, a contradiction.

Suppose that $(b)$ holds. Since $n=0$, $G'$ is a complete bipartite graph.
Thus $\beta(G',S\cap V(G'))$ is a star by the fact that every edge in $G'$ is a dominating edge of $G'$,
which implies that $\beta$ is a double star. Thus $\beta\in \G$ by Corollary~\ref{cor:tree}, a contradiction.

Suppose that (a) holds.
Without loss of generality we may assume that $x\in S$ and $y\not\in S$ by Lemma~\ref{easy:observation:dominating} (i).
\begin{subclm}\label{subclaim:beta} The following hold:
\begin{itemize}
\item[\rm(i)] Each of $S \cap V(G')$ and $(V(G)\setminus S)\cap V(G')$ induces a disconnected graph in $G$.
\item[\rm(ii)] $X_2\subset S$.
\item[\rm(iii)] $Y_1\cap S=\emptyset$.
\end{itemize}
\end{subclm}
\begin{proof}
If $S \cap V(G')$ or $(V(G)\setminus S)\cap V(G')$ is connected,
then it is easy to check that $\beta$ is a double star with the dominating edge $u_xu_y$, a contradiction. Thus (i) holds.

To show (ii), suppose to the contrary that $X_2\setminus S\neq\emptyset$.
If $X_1\setminus S\neq\emptyset$,
then $(V(G)\setminus S)\cap V(G')$ induces a connected graph, a contradiction to (i). Thus $X_1\subset S$.
Suppose that $Y_1\cap S\neq \emptyset$.
If $Y_2\cap S\neq\emptyset$ or $G=D(m,n;p,q)$, then $S \cap V(G')$ induces a connected graph, a contradiction to (i).
If $Y_2\cap S=\emptyset$ and $G=D^*(m,n;p,q)$, then $\beta$ is a double star with the dominating edge $u_xu_y$, a contradiction.
Thus $Y_1\cap S=\emptyset$.
If $G=D(m,n;p,q)$, then $(V(G)\setminus S)\cap V(G')$ induces a connected graph, a contradiction to (i). Thus, $G=D^*(m,n;p,q)$.
Then $\beta=D(m,0;p',q')$ for some $p',q'\ge 0$ (See Figure~\ref{fig:prop48}). By minimality of $|V(G')|$, $\beta\in\G$, a contradiction.

\begin{figure}[h!]
\centering
  \includegraphics[width=8cm,page=7]{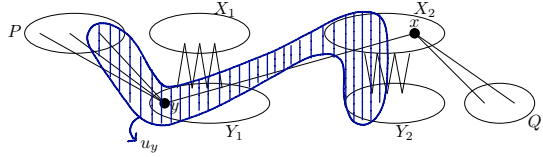}\\
\caption{An illustration for the proof of Subclaim~\ref{subclaim:beta} (ii).}\label{fig:prop48}
\end{figure}

To show (iii), suppose that $Y_1\cap S\neq\emptyset$.
Note that $X_2\subset S$ by (ii).
If $Y_2\cap S\neq \emptyset$ or $G=D(m,n;p,q)$, then $S\cap V(G')$ induces a connected graph, a contradiction to (i). Thus $Y_2\cap S=\emptyset$
and $G=D^*(m,n;p,q)$.
Then $\beta=D(0,n;p',q')$ for some $p',q'\ge 0$. By minimality of $|V(G')|$, $\beta\in\G$, a contradiction.
\end{proof}

We prove that $X'\subset S$ and $Y' \cap S=\emptyset$.
From (ii) and (iii) of Subclaim~\ref{subclaim:beta},
it is sufficient to check $X_1\subset S$ and $Y_2\cap S=\emptyset$.
If $X_1\not\subset S$ and $Y_2\cap S\neq \emptyset$, then
$\beta$ is a double star with the dominating edge $u_xu_y$, a contradiction.
If $X_1\not\subset S$ and $Y_2\cap S=\emptyset$, then
$\beta=D(0,n;p',q')$ for some $p'$ and $q'$, a contradiction to the minimality of $|V(G')|$.
If $X_1\subset S$ and $Y_2\cap S\neq \emptyset$, then
$\beta=D(m,0;p',q')$ for some $p'$ and $q'$, a contradiction to the minimality of $|V(G')|$. Hence, $X'\subset S$ and $Y'\cap S=\emptyset$. This contradicts the observation that some edge of $G'$ must be contracted to obtain $\beta$.
This completes the proof of the claim.
\end{proof}

By Claim~\ref{claim:beta=G}, either $S=X$ or $S=Y$.
Without loss of generality, we may assume that $S=X$.
By Lemma~\ref{easy:observation:dominating} (ii), it follows that $p=0$.
Let $X_1=\{x_1,\ldots, x_m\}$ and $Y_1\setminus\{y\}=\{y_1,\ldots,y_m\}$ (recall that $m,n\ge2$). Without loss of generality, we assume that $w(x_1)\le w(x_i)$ for all $i$.

\medskip

\noindent{\textbf{(Case 1)}} Suppose that $w(y_1)\le w(y)+w(X_2\setminus\{x\})$.
Let $S'=(S\setminus\{x_1\})\cup \{y\}$.
Then $w(S')=w(S)-w(x_1)+w(y)\le w(S)$, since $x_1y\in E(G)$ and $S$ is a safe set.
Moreover, since the dominating edge $xy$ is in $S'$,  $G[S']$ is connected.

Take a component $D$ of $G-S'$.
If $D$ is a singleton, say $D=\{y'\}$, then $y'\in Y_2\cup Q$ and so
$w(y')\le w(x)\le w(S')$, where the first inequality follows from the fact that $S$ is a safe set.
Suppose that $D$ is not a singleton.
Then $D=\{x_1,y_1,\ldots,y_{m}\}$.
Note that $w(x_1)\le w(x_i)$, $w(y_i)\le w(x_j)$, $w(y_i)\le w(x)$ for every $i,j$.
Then by the case assumption,
$w(D)=w(x_1)+w(y_1)+(w(y_2)+\cdots+w(y_{m})) \le
w(x_2)+(w(y)+w(X_2\setminus\{x\})) + (w(x_3)\cdots+w(x_m)+w(x))  = w(S')$.
This implies that $S'$ is a connected safe set of $(G,w)$, a contradiction.

\smallskip

\noindent{\textbf{(Case 2)}} Suppose that $w(y)+w(X_2\setminus\{x\})<w(y_1)$.
Then clearly, we have $G=D^*(m,n;0,q)$.
We take a vertex $x'\in X_2\setminus\{x\}$.
By the case assumption, we have
$w(y)+w(x')< w(y_1)$.
Thus
\begin{eqnarray}\label{eq:lem33}
&&w(x')< w(y_1)-w(y)<w(y_1) \le w(x_1).
\end{eqnarray}
Let $S'=(X\setminus\{x'\})\cup\{y\}$.
Then $w(S')=w(S)-w(x')+w(y)\le w(S)$, since $x'y\in E(G)$ and $S$ is a safe set.
Moreover, since the dominating edge $xy$ is in $S'$,  $G[S']$ is connected.

Take a component $D$ of $G-S'$.
If $D$ is a singleton, say $D=\{y'\}$, then $y'\in Y_1\cup Q$ and so
$w(y')\le w(x)\le w(S')$, where the first inequality is from the fact that $S$ is a safe set.
Suppose that $D$ is not a singleton.
Then $D=\{x'\}\cup Y_2$. By \eqref{eq:lem33} and the fact that there is a perfect matching between $Y_2$ and $X_2\setminus\{x'\}$, we have
\[w(D)=w(x')+w(Y_2) \le
w(x_1)+w(X_2\setminus\{x'\}) \le w(S').\]
This implies that $S'$ is a connected safe set of $(G,w)$, a contradiction.
\end{proof}

\section{Proofs of Theorems~\ref{main:thm:triangle-free} and~\ref{main:thm:bipartite:CS}}\label{sec:main}

\subsection{Proof of Theorem~\ref{main:thm:triangle-free}}

In this subsection,
we often use the lemmas in Section~\ref{sec:contraction}. Throughout the proof, we obtain a partition $\{V_1,\ldots,V_5\}$ of $V(G)$ so that $V_2$ and $V_4$ induce connected graphs (with some additional conditions according to the lemmas), and then we apply those lemmas.

\begin{proof}[Proof of Theorem~\ref{main:thm:triangle-free}]
Suppose to the contrary that there is a triangle-free connected graph $G\in \G$, not a cycle, such that  $\mathrm{diam}(G)\ge 4$.
Let $u$ and $v$ be vertices such that $\mathrm{dist}_G(u,v)=\mathrm{diam}(G)$.
Note that every neighbor of $u$ or $v$ has degree at least two by the maximality of $\mathrm{dist}_G(u,v)$.
For simplicity, let $H_u=G-N_G[u]$ and $H_v=G-N_G[v]$.

\begin{clm} For $a\in\{u,v\}$, if $H_a$ is connected, then $\deg_G(a)\le 2$.
  \label{claim:connected}\end{clm}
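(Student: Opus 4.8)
The plan is to obtain the claim as an immediate consequence of Corollary~\ref{cor:PropThree:closed_neighborhood}. Fix $a\in\{u,v\}$ and suppose $H_a=G-N_G[a]$ is connected; I want to show $\deg_G(a)\le 2$. Arguing by contradiction, assume instead that $\deg_G(a)\ge 3$. The strategy is simply to verify that the vertex $a$ satisfies all the hypotheses of Corollary~\ref{cor:PropThree:closed_neighborhood} in the graph $G$, which would give $G\notin\G$, contradicting $G\in\G$.

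The hypotheses to check are four: (i) $\deg_G(a)\ge 3$; (ii) $N_G(a)$ is an independent set; (iii) every vertex of $N_G(a)$ has degree at least two; (iv) $G-N_G[a]$ is connected. Item (i) is exactly our contradiction hypothesis, and item (iv) is the assumption on $H_a$. Item (ii) follows from triangle-freeness: an edge inside $N_G(a)$ would, together with $a$, span a triangle. Item (iii) is the observation recorded just before the claim, coming from the extremality of the diametral pair; concretely, letting $b$ denote the other endpoint of the pair (so $b=v$ if $a=u$, and $b=u$ if $a=v$), a degree-one neighbor $w$ of $a$ would force every $w$–$b$ path to pass through $a$, whence $\mathrm{dist}_G(w,b)=1+\mathrm{dist}_G(a,b)=\mathrm{diam}(G)+1$, which is impossible; here $w\neq b$ since $\mathrm{dist}_G(a,b)=\mathrm{diam}(G)\ge 4$.

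With (i)–(iv) in hand, Corollary~\ref{cor:PropThree:closed_neighborhood} applied to the vertex $a$ yields $G\notin\G$, a contradiction. Hence $\deg_G(a)\le 2$, which proves the claim. I do not anticipate any real technical obstacle here: the only thing that requires care is lining up the hypotheses of Corollary~\ref{cor:PropThree:closed_neighborhood} with the situation at hand — specifically, using triangle-freeness to get independence of $N_G(a)$ and the maximality of $\mathrm{dist}_G(u,v)$ to get the degree lower bound on the neighbors of $a$ (and noting that $G-N_G[a]$ is automatically nonempty, as it contains $b$).
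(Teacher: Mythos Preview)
Your proposal is correct and follows essentially the same approach as the paper: assume $\deg_G(a)\ge 3$, use triangle-freeness to get that $N_G(a)$ is independent, use the maximality of $\mathrm{dist}_G(u,v)$ to ensure every neighbor of $a$ has degree at least two, and then apply Corollary~\ref{cor:PropThree:closed_neighborhood} to derive $G\notin\G$, a contradiction. Your write-up merely spells out in more detail the justification for item~(iii), which the paper records just before the claim.
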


\begin{proof}
 Suppose that $H_a$ is connected and $\deg_G(a)\ge 3$.
 Since $G$ is triangle-free, $N_G(a)$ is an independent set.
Moreover, by the maximality of $\mathrm{dist}_G(u,v)$, each neighbor of $a$ has degree at least two.  By Corollary~\ref{cor:PropThree:closed_neighborhood}, $G$ does not belong to $\G$, which is a contradiction.
\end{proof}

\begin{clm}\label{claim:disconnected}
At least one of $H_u$ and $H_v$ is disconnected.
\end{clm}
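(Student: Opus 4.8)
The plan is to argue by contradiction: assume both $H_u$ and $H_v$ are connected, and derive that $G$ is a cycle, contradicting our standing assumption. By Claim~\ref{claim:connected}, if $H_u$ and $H_v$ are both connected then $\deg_G(u)\le 2$ and $\deg_G(v)\le 2$. Since $\mathrm{diam}(G)\ge 4$, neither $u$ nor $v$ can be isolated, so in fact $1\le \deg_G(u)\le 2$ and similarly for $v$. The first subgoal is to rule out degree-one vertices at the ends of a diametral path. Indeed, if $\deg_G(u)=1$ with $N_G(u)=\{u'\}$, then by the earlier remark (a pendant edge attached to a graph not in $\G$ yields a graph not in $\G$, via Proposition~\ref{prop:pendant}) the block structure forces $G-u$ to be in $\G$; but $G-u$ is triangle-free and, as I will check, still has diameter $\ge 3$ and is not a cycle unless $G$ itself is essentially a short graph — here I would instead use Proposition~\ref{prop:pendant} directly: $G[\{u,u'\}\cup(\text{component of }G-u')]$ must be in $\G$, and more usefully the "remove a pendant vertex" reduction lets me assume $\deg_G(u)=\deg_G(v)=2$ from the start by replacing $G$ with a suitable smaller counterexample, or simply handle the pendant case by noting it reduces the diameter-$\ge 4$ obstruction to a strictly smaller triangle-free graph in $\G$ that is not a cycle. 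So after this reduction I may assume $\deg_G(u)=\deg_G(v)=2$.

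Now the heart of the argument: with $\deg_G(u)=2$ and $H_u=G-N_G[u]$ connected, I claim every vertex of $G$ has degree exactly $2$, which (for a connected graph) makes $G$ a cycle. Let $N_G(u)=\{a,b\}$; triangle-freeness gives $ab\notin E(G)$. Pick any vertex $z$ with $\deg_G(z)\ge 3$. I want to build a contraction of $G$ to one of $H_1,H_2,H_3$ (Figure~\ref{fig:h1h2h3}) and invoke Lemma~\ref{prop:contractible:path}, Lemma~\ref{prop:contractible:C4+}, or Lemma~\ref{prop:contractible:K23}. The natural choice: if $z\notin\{a,b\}$, then since $H_u=G-\{u,a,b\}$ is connected and contains $z$ with $\deg_{H_u}(z)\ge 1$ (in fact we can arrange $z$ to have $\ge 3$ neighbors, at most two of which are $a,b$, so $\deg_{H_u}(z)\ge 1$), I partition $V(G)$ as $\{u\}$, $\{a\}$, $\{b\}$, and then split $H_u$ around a neighborhood of $z$. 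Concretely, take $V_1=\{a\}$, $V_2=\{u\}$, $V_3=\{b\}$, $V_4$ a connected set containing $z$ chosen so that $V_5=H_u\setminus(V_4)$ is nonempty and $V_4$ carries the role of $u_4$ — this realizes $G$ as contractible to $H_3=K_{2,3}$ (with $|V_1|=|V_2|=1$, $V_3=\{b\}$ connected, and $v_4$-type condition arranged by picking $V_4$ minimally connected reaching $V_3$ and $V_5$), and Lemma~\ref{prop:contractible:K23} gives $G\notin\G$. The degenerate sub-cases (where such a split is impossible, e.g. $H_u$ too small, or $z\in\{a,b\}$) must be treated separately: if $z=a$ has degree $\ge 3$, then $a$ has $\ge 2$ neighbors in $H_u\cup\{b\}$; combined with the $u$–$a$–(rest) structure and connectivity of $H_u$, a short analysis produces a contraction to $H_2$ or $H_1$ on bags built from $\{u\},\{a\},\{b\}$ and two pieces of $H_u$.

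The step I expect to be the main obstacle is verifying that the bags in the target contraction satisfy the precise side-conditions of Lemmas~\ref{prop:contractible:C4+} and~\ref{prop:contractible:K23} — namely that the relevant edge-cuts have size $1$ (for $H_2$) or that there is a vertex $v_4\in V_4$ with $E_G(\{v_4\},V_5)=E_G(V_4,V_5)$ (for $H_3$) — and simultaneously that the bags meant to be connected ($V_2$ and $V_4$, and $V_3$ when required) genuinely are. This is a case analysis on how the neighborhood of the high-degree vertex $z$ meets $\{a,b\}$, $H_u$, and (by symmetry) the analogous structure around $v$; I would organize it by first extracting a shortest path from $u$ to $v$ through $a$ (or $b$), using $\mathrm{dist}_G(u,v)\ge 4$ to guarantee enough room, and then choosing $V_4$ as an inclusion-minimal connected subset of $H_u$ that is adjacent to $\{b\}$ and to the remainder, which forces the single-edge and single-attachment conditions. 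Once the contraction is in hand for every choice of $z$ with $\deg_G(z)\ge3$, we conclude $G$ is $2$-regular, hence a cycle, contradicting the hypothesis, and the claim follows.
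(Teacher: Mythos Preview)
Your proposal has two genuine gaps.

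First, the contraction you build for the $H_3$ case is structurally impossible. In $H_3=K_{2,3}$ the vertex $u_2$ has degree three (it lies in the size-two part), so the bag $V_2$ must be adjacent to each of $V_1,V_3,V_5$. You set $V_2=\{u\}$ with $N_G(u)=\{a,b\}=V_1\cup V_3$; since $\deg_G(u)=2$ this forces $E_G(V_2,V_5)=\emptyset$, and the contraction does \emph{not} realize $H_3$. The same obstruction blocks the $H_2$ variant with $V_2=\{u\}$, since in Lemma~\ref{prop:contractible:C4+} the side conditions $|E_G(V_1,V_2)|=|E_G(V_2,V_3)|=1$ are fine, but you still need $E_G(V_2,V_5)=\emptyset$ (good) and $E_G(V_4,V_5)\neq\emptyset$, $E_G(V_4,V_1)\neq\emptyset$, $E_G(V_4,V_3)\neq\emptyset$ with $V_4$ connected --- and you never explain how an arbitrary high-degree vertex $z$ lets you split $H_u$ into such $V_4,V_5$. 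The phrase ``inclusion-minimal connected subset of $H_u$ that is adjacent to $\{b\}$ and to the remainder'' does not force the single-attachment condition of Lemma~\ref{prop:contractible:K23}, and you give no argument that it does.

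Second, the pendant reduction is not sound as stated. Proposition~\ref{prop:pendant} gives $G-u\in\G$ when $u$ is a leaf, but $G-u$ need only satisfy $\diam(G-u)\ge \diam(G)-1\ge 3$, so you may lose the hypothesis $\diam\ge 4$; and $G-u$ could be a cycle. There is no minimal-counterexample framework in place here to absorb this.

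The paper proceeds quite differently. It first proves a strong subclaim: for $a\in\{u,v\}$ one has $\deg_G(a)=2$ \emph{and} $H_a-x$ is disconnected for every $x$ with $\dist_G(a,x)\ge 3$ (the degree-one case is handled by a direct $H_1$-contraction, and the second statement by an $H_2$-contraction with $V_2=\{a\}$). This subclaim is the missing key: it turns the neighbors of $u$ and of $v$ into cut vertices of a very specific kind. The argument then splits on whether $G-\{u,v\}$ is connected. If it is disconnected, the two components together with $u,v$ form a theta-like structure and, since $G$ is not a cycle, one component is not a bare path, giving an $H_2$-contraction. If it is connected, the subclaim is applied to both $u$ and $v$ simultaneously to carve $G$ into five bags realizing $H_3$, with the delicate single-attachment vertex $v_4$ coming from the analysis of the components of $H_u-v$. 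Your proposal never isolates this ``$H_a-x$ disconnected'' property, and without it the case analysis you outline cannot be completed.
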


\begin{proof}
Suppose that $H_u$ and $H_v$ are connected.
By Claim~\ref{claim:connected}, $\deg_G(u)\le 2$ and $\deg_G(v)\le 2$.

\begin{subclm}\label{subclm:triangel-fee}
For $a\in\{u,v\}$, $\deg_G(a)=2$ and the graph $H_a-x$ is disconnected for every vertex $x$ with $\dist_G(a,x)\ge 3$.
\end{subclm}
\begin{proof}
Suppose that $\deg_G(u)=1$ and $\deg_G(v)=1$. Let
$V_1=\{u\}, \quad V_2=N_G(u),\quad  V_4=N_G(v), \quad V_5=\{v\}, \quad  V_3=V(G)\setminus (V_1\cup V_2\cup V_4\cup V_5).$
Note that $V_3\neq \emptyset$ since $\mathrm{diam}(G)\ge 4$. In addition, $G[V_2]$ and $G[V_4]$ are connected. By contracting $V_j$'s, $G$ is $H_1$-contractible for the graph $H_1$ in Figure~\ref{fig:h1h2h3}, and so $G\not\in \G$ by Lemma~\ref{prop:contractible:path}, which is a contradiction.
Without loss of generality, we may suppose that $\deg_G(u)=2$, and let $N_G(u)=\{z_1,z_2\}$.
Suppose that $H_u-x$ is connected for some  vertex $x$ with $\dist_G(u,x)\ge 3$.
Then let
$ V_1=\{z_1\},  V_2=\{u\}, V_3=\{z_2\}, V_5=\{x\},  V_4=V(G)\setminus (V_1\cup V_2\cup V_3\cup V_5)$.
Note that $V_4\neq \emptyset$ since  $V_4=V(H_u-x)$.
Clearly, $V_2$ and $V_4$ induce connected graphs and $|E_G(V_1,V_2)|=|E_G(V_2,V_3)|=1$.
Moreover, since $\dist_G(u,x)\ge 3$, we have $xz_i\not\in E(G)$ for each $i\in\{1,2\}$.
By contracting $V_j$'s, $G$ is $H_2$-contractible for the graph $H_2$ in Figure~\ref{fig:h1h2h3} and so $G\not\in \G$ by Lemma~\ref{prop:contractible:C4+}, which is a contradiction. Thus $H_u-x$ is disconnected for every vertex $x$ with $\dist_G(u,x)\ge 3$.
Then $H_u-v$ is disconnected and so  $\deg_G(v)\neq 1$, which implies that $\deg_G(v)=2$.
By the symmetry of the roles of $u$ and $v$, we can show that $H_v-x$ is disconnected for every vertex $x$ with $\dist_G(v,x)\ge 3$.
\end{proof}

Let  $N_G(u)=\{z_1,z_2\}$ and  $N_G(v)=\{w_1,w_2\}$.
Note that $E_G(\{z_1,z_2\}, \{w_1,w_2\})$
$=\emptyset$ because $\dist_G(u,v)\ge 4$.
We divide the proof into two cases, whether $G-\{u,v\}$ is  connected or not.
See Figures~\ref{fig:proof:triangle-free:Case1}~and~\ref{fig:proof:triangle-free:Case2} for illustrations.

\medskip

\noindent\textbf{(Case 1)} Suppose that  $G-\{u,v\}$ is connected.
Let $D_1$ and $D_2$ be the components of $H_u-v$, and we may assume that $w_i\in D_i$ and $z_i$ has a neighbor in $D_i$. From the case assumption together with the fact that $E_G(\{z_1,z_2\},\{w_1,w_2\})=\emptyset$, we have
  $E_G(\{z_2\},D_1-w_1)\cup E_G(\{z_1\},D_2-w_2)\neq\emptyset$.
Note that if $G[D_i\cup\{z_i\}] - w_i$ is connected for each  $i\in\{1,2\}$, then
$H_v-u$ is connected, which is a contradiction to Subclaim~\ref{subclm:triangel-fee}. Hence we may assume that $G[D_1\cup\{z_1\}] - w_1$ is disconnected.

\begin{figure}[h!]\centering
\includegraphics[width=6cm,page=8]{figures.pdf}\\
\caption{An illustration for (Case 1) of Claim~\ref{claim:disconnected}. }\label{fig:proof:triangle-free:Case1}
    \end{figure}

Let $D^*_{z_1}$ be the component of $G[D_1\cup\{z_1\}]-w_1$ containing $z_1$, and
let $V_1=\{u\},   \quad  V_2=\{z_2\},  \quad V_3=D_2\cup\{v\}, \quad V_4=D^*_{z_1}\cup\{w_1\},   \quad V_5=V(G)\setminus (V_1\cup V_2\cup V_3\cup V_4)$.
We reach a contradiction by showing that all the conditions of Lemma~\ref{prop:contractible:K23} are satisfied.  Note that each of $V_1$, $V_2$, $V_3$, and $V_4$ induces a connected graph, and since $V_5=D_1\setminus  (D^*_{z_1}\cup\{w_1\})$, $V_5\neq \emptyset$.
In addition, $w_1$ is a unique vertex of $V_4$ having a neighbor in $V_5$ and $w_1$ has a neighbor $v$ in $V_3$.

Now it remains to show that $G$ is $H_{3}$-contractible by contracting $V_j$'s for the graph $H_3$ in Figure~\ref{fig:h1h2h3}.
Since both $H_v-u$ and $H_u-v$ are disconnected by Subclaim~\ref{subclm:triangel-fee}, $E_G(V_2,V_4)=\emptyset$ and $E_G(V_3,V_5)=\emptyset$.
Since $N_G(u)\cap V_3=N_G(u)\cap V_5=\emptyset$, it is clear that $E_G(V_1,V_3)=E_G(V_1,V_5)=\emptyset$.
By the structure, it is clear that each of $E_G(V_1,V_2)$, $E_G(V_1,V_4)$, $E_G(V_3,V_2)$, $E_G(V_3,V_4)$ and $E_G(V_5,V_4)$ is nonempty. To check that $E_G(V_2,V_5)\neq \emptyset$,
take a pendant vertex $x$ of a spanning forest of $G[V_5\cup\{w_1\}]$ other than $w_1$.  Then $H_u-x$ is connected. Hence, $x\in N_G[z_1]\cup N_G[z_2]$. Since $x$ is not in the same component with $z_1$ in $G[D_1\cup\{z_1\}]-w_1$, $x\not\in N_G[z_1]$  and so  $xz_2\in E(G)$. Thus $E_G(V_2,V_5)\neq \emptyset$, and therefore by contracting $V_j$'s, $G$ is
$H_3$-contractible for the graph $H_3$ in Figure~\ref{fig:h1h2h3}.

\medskip

\noindent\textbf{(Case 2)} Suppose that $G-\{u,v\}$ is disconnected.
Note that both  $G-u$ and $G-v$ are connected from the assumption that $\mathrm{dist}_G(u,v)$ is maximum.
Thus, since $\deg_G(u)=\deg_G(v)=2$, $G-\{u,v\}$
 has exactly two components $D_1$ and $D_2$.
Without loss of generality,  let $D_1\supset\{z_1,w_1\}$ and $D_2\supset\{z_2,w_2\}$.

\begin{figure}[h!]\centering
\includegraphics[width=6cm,page=9]{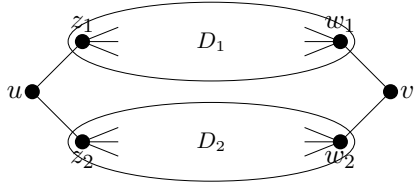}\\
\caption{An illustration for (Case 2) of Claim~\ref{claim:disconnected}. }\label{fig:proof:triangle-free:Case2}
    \end{figure}

Since $G$ is not a cycle, one of $D_1$ and $D_2$, say $D_2$, is not a path joining $z_2$ and $w_2$.
Hence $D_2$ has a spanning tree $T$ with a pendant vertex $x$ with $x\not\in \{w_2, z_2\}$. Then $x$ is not a cut vertex of $D_2$ and so $D_2-x$ is connected.
Let $V_1=\{u\}$, $V_2=D_1$, $V_3=\{v\}$, $V_4=D_2\setminus \{x\}$, and $V_5=\{x\}$.
Clearly, $V_2$ and $V_4$ induce connected graphs and $|E_G(V_1,V_2)|=|E_G(V_2,V_3)|=1$.
By contracting $V_j$'s, $G$ is $H_2$-contractible for the graph $H_2$ in Figure~\ref{fig:h1h2h3} and so $G\not\in \G$ by Lemma~\ref{prop:contractible:C4+}, which is a contradiction. We have completed the proof of Claim~\ref{claim:disconnected}.
\end{proof}

For each pair $(x,y)$ of two vertices with $\mathrm{dist}_G(x,y)=\mathrm{diam}(G)$, we denote by $N(x;y)$ the set of neighbors of $x$ which are on some shortest $(x,y)$-path, that is,
\begin{eqnarray*}
&& N(x;y) = \{ a\in N_G(x)\mid \mathrm{dist}_G(a,y)=\mathrm{diam}(G)-1\}.
\end{eqnarray*}
For simplicity, let
$n_{(x,y)}=|N(x;y) |+|N(y;x)|$.
Suppose that we take two vertices $u$ and $v$ with $\mathrm{dist}_G(u,v)=\mathrm{diam}(G)$ so that (1) $n_{(u,v)}$ is minimum and (2) $\deg_G(u)+\deg_G(v)$ is minimum subject to the condition (1).

\begin{clm}\label{claim:disconnected2}
Suppose that $H_a$ is disconnected for some $a\in\{u,v\}$.
Then there is a partition $\{A_1,A_2,A_3\}$ ($A_i\neq \emptyset$ for each $i$) of $V(G)$ satisfying all of the following (See Figure~\ref{fig:proof:Claim:triangle-free:V1V2V3}.):
\begin{itemize}
\item[\rm(i)] $A_2=N_G[a]$ (and therefore $G[A_2]$ is connected);
\item[\rm(ii)] $G[A_3]$ induces a connected graph and $b\in A_3$, where $\{b\}=\{u,v\}\setminus\{a\}$;
\item[\rm(iii)] for each vertex $z\in A_1$, $N_G(z)=N_G(a)$.
\end{itemize}
\end{clm}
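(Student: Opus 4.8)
The plan is to take $a=u$ without loss of generality (the roles of $u$ and $v$ being symmetric in the choice of the diametral pair) and to set $A_2:=N_G[u]$, $A_3:=C_v$ where $C_v$ is the component of $H_u$ containing $v$, and $A_1:=V(G)\setminus(A_2\cup A_3)$. With these choices (i) and (ii) are almost immediate, and the whole difficulty is concentrated in (iii).

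First I would record two easy facts. Since $G$ is triangle-free, $N_G(u)$ is independent, so $G[A_2]$ is a star centred at $u$ and in particular connected; this is (i). Since $\dist_G(u,v)=\diam(G)$ is as large as possible, $G-u$ is connected — a component of $G-u$ not containing $v$ would contain a vertex at distance more than $\diam(G)$ from $v$ — and therefore every component of $H_u=(G-u)-N_G(u)$ is joined to $N_G(u)$ by at least one edge. In particular $A_3=C_v$ is connected and contains $b=v$, so (ii) holds; also $A_1\neq\emptyset$ because $H_u$ is disconnected, and $A_2,A_3\neq\emptyset$ trivially. Finally, each $z\in A_1$ lies in some component $C\neq C_v$ of $H_u$, so $\emptyset\neq N_G(z)\cap N_G(u)$ and $N_G(z)\subseteq N_G(u)\cup(C\setminus\{z\})$; hence (iii) is equivalent to the assertion that every component $C\subseteq A_1$ of $H_u$ consists of a single vertex adjacent to every vertex of $N_G(u)$.

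To prove that assertion I would argue by contradiction, splitting into two cases, and in each case produce a partition $\{V_1,\dots,V_5\}$ of $V(G)$ with $V_2,V_4$ connected that exhibits $G$ as $H_i$-contractible for some $i\in\{1,2,3\}$ (the graphs of Figure~\ref{fig:h1h2h3}), contradicting $G\in\G$ via the corresponding lemma of Section~\ref{sec:contraction}. In \textbf{Case A}, some component $C\subseteq A_1$ has $|C|\ge 2$: picking $x$ to be a leaf of a spanning tree of $C$ (so $C-x$ is connected and $x$ has a unique neighbour in $C$) and using triangle-freeness to control which vertices of $N_G(u)$ meet $x$ and its neighbour, I would take $V_5$ (or $V_1$) to be $\{x\}$, $V_2$ to be the vertex of $N_G(u)$ from which $C$ hangs (or $\{u\}$), $V_4$ to be $A_3$ together with the appropriate vertices of $N_G[u]$, and $V_3$ the rest; the choice of $i$, and whether the side conditions $|E_G(V_1,V_2)|=|E_G(V_2,V_3)|=1$ of Lemma~\ref{prop:contractible:C4+} or the single-vertex/unique-attachment conditions of Lemma~\ref{prop:contractible:K23} can be met, are dictated by how many vertices of $N_G(u)$ are adjacent to $C$; when $\deg_G(u)\ge 3$ and $C$ is the only non-$C_v$ component one may instead contract the rest of $H_u$ directly to get $K_{2,\deg_G(u)}$ and invoke Lemma~\ref{prop:contractible:Kmn} or Corollary~\ref{cor:PropThree:closed_neighborhood}. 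In \textbf{Case B}, every component of $A_1$ is a singleton but some singleton $\{z\}\subseteq A_1$ misses a vertex of $N_G(u)\setminus N_G(z)$: here $z$ is a ``sub-twin'' of $u$, and one again either builds an $H_i$-contraction with $V_1=\{z\}$, or — when $z$ is pendant — uses Proposition~\ref{prop:pendant} to pass to $G-z\in\G$ (still triangle-free, still of diameter $\ge\diam(G)\ge 4$, and not a cycle, since a cycle plus a pendant edge is $H_2$-contractible, hence not in $\G$) and contradicts the minimality of $(u,v)$ or of $G$.

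I expect the case analysis for (iii) to be the main obstacle. One must partition the ``off-path'' part of $H_u$ according to how its components attach to $N_G(u)$, treat the degenerate case $\deg_G(u)=2$ separately (Corollary~\ref{cor:PropThree:closed_neighborhood} needs degree $\ge 3$), and in every configuration verify both that $V_2$ and $V_4$ induce connected subgraphs and that the single-edge or unique-attachment hypotheses of Lemmas~\ref{prop:contractible:C4+} and~\ref{prop:contractible:K23} hold. Finding a single partition $\{V_1,\dots,V_5\}$ that simultaneously satisfies all hypotheses of the applicable lemma is the delicate point; everything leading up to it is routine.
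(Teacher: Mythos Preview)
Your setup of $A_1,A_2,A_3$ and the verification of (i) and (ii) match the paper exactly. For (iii), however, you are working far too hard: the paper's argument is a few lines and uses no contraction lemmas at all. The crucial tool you have overlooked is the \emph{extremal choice} of the pair $(u,v)$, namely that $n_{(u,v)}=|N(u;v)|+|N(v;u)|$ is minimum among all diametral pairs, and that $\deg_G(u)+\deg_G(v)$ is minimum subject to that.

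Here is the paper's argument. Fix a component $D\subseteq A_1$ of $H_u$ and a vertex $z\in D$. Every $(z,v)$-path passes through $N_G(u)$, and if $z$ had no neighbour in $N(u;v)$ then $\dist_G(z,v)>\diam(G)$; so $z$ is adjacent to some vertex of $N(u;v)$, whence $\dist_G(z,v)=\diam(G)$ and $(z,v)$ is itself a diametral pair. Moreover $N(z;v)\subseteq N(u;v)$ and $N(v;z)\subseteq N(v;u)$, so by minimality of $n_{(u,v)}$ both inclusions are equalities; in particular every vertex of $D$ is adjacent to every vertex of $N(u;v)$. If $|D|\ge 2$, any edge of $D$ together with a vertex of $N(u;v)$ gives a triangle, so $|D|=1$, say $D=\{z\}$. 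Then $N_G(z)\subseteq N_G(u)$, hence $\deg_G(z)\le\deg_G(u)$, and the minimality of $\deg_G(u)+\deg_G(v)$ (applied to the diametral pair $(z,v)$) forces $N_G(z)=N_G(u)$.

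Your contraction-based plan might in principle be pushed through, but it is substantially more intricate, and one of your fallbacks is not available: there is no minimality assumption on $|V(G)|$ in the proof of Theorem~\ref{main:thm:triangle-free}, so ``pass to $G-z$ and contradict the minimality of $G$'' has nothing to contradict.
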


\begin{figure}[h!]\centering\includegraphics[width=8cm,page=10]{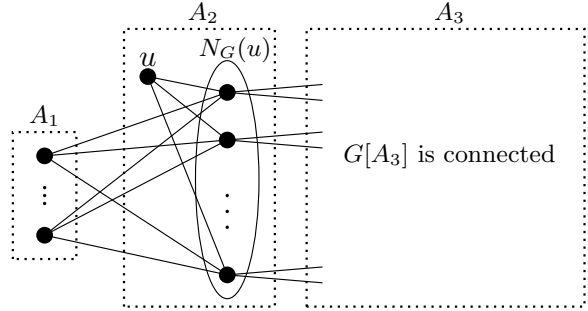}\\
\caption{An illustration for Claim~\ref{claim:disconnected2}.}\label{fig:proof:Claim:triangle-free:V1V2V3}
    \end{figure}

\begin{proof}Without loss of generality, we may assume that $a=u$.
Let $A_2=N_G[u]$, $A_3$ be the component of  $H_u$ such that $v\in A_3$, and
 $A_1=V(G)\setminus (A_2\cup A_3)$. Then (i) and (ii) follow  immediately.
We will show (iii).

Take a component $D$ of  $H_u$ other than $A_3$.
Note that every path from a vertex in $D$ to the vertex $v$ contains a vertex in $N_G(u)$.
So if there is a vertex $z\in D$ such that $E_G(z, N(u;v))=\emptyset$, then
$\mathrm{dist}(z,v)>\mathrm{dist}_G(u,v)=\mathrm{diam}(G)$,  a contradiction.
Hence, each vertex in $D$ has a neighbor in $N{(u;v)}$.
Moreover, $\mathrm{dist}(z,v)=\mathrm{dist}_G(u,v)$ and $N(z;v) \subset N(u;v)$ for all $z\in D$.
We also have $N(v;z)  \subset N{(v;u)}$ for all $z\in D$.
By the minimality of $n_{(u,v)}$, we have $N(z;v) = N(u;v)$.
Moreover, if $|D|\ge 2$, then an edge of $D$ and a vertex in $N(u;v)$ create a triangle.
Thus $|D|=1$. Let $D=\{z\}$. Since $\deg_G(z)\le \deg_G(u)$,
from the minimality of $\deg_G(u)+\deg_G(v)$, it follows that $ N_G(u)=N_G(z)$.
Therefore, it completes the proof of Claim~\ref{claim:disconnected2}.
\end{proof}

By Claim~\ref{claim:disconnected}, we may assume that $H_u$ is disconnected. Then there is a partition $\{A^{(u)}_1,A^{(u)}_2,A^{(u)}_3\}$
of $V(G)$ satisfying (i)-(iii) of Claim~\ref{claim:disconnected2},  by applying the claim for the vertex $u$.
First, suppose that $H_v$ is connected.
Then by Claim~\ref{claim:connected}, $\deg_G(v)\le 2$.
Suppose that $\deg_G(v)=1$. Let
\[V_1=A^{(u)}_1, \quad V_2=A^{(u)}_2,  \quad V_3=A^{(u)}_3\setminus N_G[v], \quad V_4=N_G(v),  \quad V_5=\{v\}.\]
Note that $G[V_2]$ and $G[V_4]$ are connected. By contracting $V_j$'s, $G$ is $H_1$-contractible for the graph $H_1$ in Figure~\ref{fig:h1h2h3}. Then  $G\not\in \G$ by Lemma~\ref{prop:contractible:path}, which is a contradiction.
Suppose that $\deg_G(v)=2$, say  $N_G(v)=\{w_1,w_2\}$. Let
\[V_1=\{w_1\},  \quad V_2=\{v\}, \quad V_3=\{w_2\}, \quad V_5=A^{(u)}_1, \quad V_4=V(G)\setminus (V_1\cup V_2\cup V_3\cup V_5).\]
Note that $G[V_4]$ induces a connected graph from the assumption that $H_v$ is connected. By contracting $V_j$'s, $G$ is $H_2$-contractible for the graph $H_2$ in Figure~\ref{fig:h1h2h3}. Then $G\not\in \G$ by Lemma~\ref{prop:contractible:C4+}, which is a contradiction.

Secondly, suppose that $H_v$ is disconnected. Then there is a partition $\{A^{(v)}_1,A^{(v)}_2,$
$A^{(v)}_3\}$ of $V(G)$ satisfying (i)-(iii) of Claim~\ref{claim:disconnected2},  by applying the claim for the vertex $v$.
Let
\[V_1=A^{(u)}_1,  \quad V_2=A^{(u)}_2, \quad V_4=A^{(v)}_2, \quad V_5=A^{(v)}_1, \quad V_3=V(G)\setminus (V_1\cup V_2\cup V_4\cup V_5).\]
We reach a contradiction by showing that the conditions of Lemma~\ref{prop:contractible:path} are satisfied.  Note that  $G[V_2]$ and $G[V_4]$ are connected. Since  $G$ is connected graph,
$V_3\neq \emptyset$. It remains to show that by contracting $V_j$'s, $G$ is $H_1$-contractible for the graph $H_1$ in Figure~\ref{fig:h1h2h3},

Since $\mathrm{dist}_G(u,v)=\mathrm{diam}(G)\ge 4$ and $V_3= A^{(u)}_3\cap A^{(v)}_3$, $E_G(V_2,V_4)=\emptyset$ and $E_G(V_1,V_5)=E_G(V_3,V_5)=\emptyset$.
If a vertex $a$ in $V_5$ is adjacent to a vertex $b$ in $V_2$, then $b$ is also a neighbor of $v$ by the property (iii) of Claim~\ref{claim:disconnected2}, which is a contradiction to $\mathrm{dist}_G(u,v)=\mathrm{diam}(G)\ge 4$. Thus $E_G(V_2,V_5)=\emptyset$.
Similarly, $E_G(V_1,V_3)=E_G(V_1,V_4)=\emptyset$, and therefore by contracting $V_j$'s, $G$ is $H_1$-contractible.
\end{proof}

\subsection{Proof of Theorem~\ref{main:thm:bipartite:CS}}

Note that the `if' part of Theorem~\ref{main:thm:bipartite:CS} follows by Theorem~\ref{thm:cycle}, {Corollary~\ref{cor:tree}, Propositions~\ref{prop:book:graph}, and~\ref{prop:case:(5)}.}
We devote this subsection to prove the `only if' part of Theorem~\ref{main:thm:bipartite:CS}.

For a bipartite graph $G=(X,Y)$, a vertex $x$ in $X$ (resp.$y$ in $Y$) is called a \textit{universal} vertex if $N_G(x)=Y$ (resp.$N_G(y)=X$). Otherwise, we say $x$ (resp.$y$) is non-universal.
Note that for an edge $xy$ of a bipartite graph $G$, both $x$ and $y$ are universal vertices if and only if $xy$ is a dominating edge.

\begin{lem}\label{lem:diam3_bipartite}
Let $G=(X,Y)$ be a connected bipartite graph.
\begin{itemize}
\item[\rm(i)] If $\diam(G)\le 3$, then any two vertices in the same partite set have a common neighbor.
\item[\rm(ii)] If $G\in \G$, $\diam(G)\le 3$ and $\delta(G)\ge 2$, then for every vertex $v$ with $\deg_G(v)\ge 3$, there is at most one component of $G-N_G[v]$ that is not a singleton.
\item[\rm(iii)] If $G\in \G$  and $|V(G)|\ge 5$, then no two vertices of degree two have the  same neighborhood.
\end{itemize}
\end{lem}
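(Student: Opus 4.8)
For (i), I would argue purely by parity and diameter: if $u\neq u'$ lie in the same part of the bipartition, then $\dist_G(u,u')$ is even and positive, hence $\geq 2$, while $\diam(G)\leq 3$ forces $\dist_G(u,u')=2$; the middle vertex of any $u$–$u'$ geodesic of length two is then a common neighbour. So part (i) is essentially immediate.

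For (ii), suppose for contradiction that $G-N_G[v]$ has two non-singleton components $D_1,D_2$. Write $(X,Y)$ for the bipartition with $v\in X$, so $N_G(v)\subseteq Y$. Each $D_i$ is connected with at least two vertices, hence contains an edge, hence a vertex $y_i\in D_i\cap Y$. I claim $N_G(y_i)\subseteq D_i$: indeed $N_G(y_i)\subseteq X$; $v\notin N_G(y_i)$ since $y_i\notin N_G(v)$; and any other neighbour of $y_i$ lies in $X\setminus\{v\}\subseteq V(G)\setminus N_G[v]$ and is adjacent to $y_i\in D_i$, hence lies in $D_i$. Now $y_1\neq y_2$ both lie in $Y$, so $\dist_G(y_1,y_2)$ is even; on the other hand every $y_1$–$y_2$ path must first leave $D_1$ through a vertex of $N_G(v)$ and later enter $D_2$ through a vertex of $N_G(v)$, and since $N_G(y_1)\cup N_G(y_2)$ is disjoint from $N_G(v)$, such a path has length $\geq 4$. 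This contradicts $\diam(G)\leq 3$. (This argument in fact uses only that $G$ is bipartite with $\diam(G)\leq 3$.)

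For (iii), assume $u\neq u'$ have degree $2$ with $N_G(u)=N_G(u')=\{a,b\}$. Since $G$ is bipartite, $u\not\sim u'$ and $a\not\sim b$, so $\{u,a,u',b\}$ induces the $4$-cycle $u\,a\,u'\,b$. Put $Z=V(G)\setminus\{u,u',a,b\}$; as $|V(G)|\geq 5$ we have $Z\neq\emptyset$, and no vertex of $Z$ is adjacent to $u$ or $u'$ (it would lie in $N_G(u)=\{a,b\}$), so connectedness of $G$ forces at least one of $a,b$ to have a neighbour in $Z$. The plan is to contract, exploiting that in Lemmas~\ref{prop:contractible:C4+} and~\ref{prop:contractible:K23} only the bags $V_2,V_4$ must be connected, so that $Z$ may serve as a single bag even when $G[Z]$ is disconnected. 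If both $a$ and $b$ have a neighbour in $Z$, take $V_1=\{u\}$, $V_2=\{a\}$, $V_3=\{u'\}$, $V_4=\{b\}$, $V_5=Z$; one checks that $E_G(V_i,V_j)\neq\emptyset$ exactly for the pairs $\{i,j\}$ that are edges of $K_{2,3}$ with small side $\{V_2,V_4\}$, so $G$ is $H_3$-contractible, and the extra hypotheses of Lemma~\ref{prop:contractible:K23} hold ($|V_1|=|V_2|=1$, $V_3$ connected, $V_4=\{b\}$ a single vertex with $bu'\in E_G(\{b\},V_3)$), giving $G\notin\G$. If exactly one of $a,b$ — say $b$, after relabelling — has a neighbour in $Z$, take the same partition; now $E_G(V_2,V_5)=E_G(\{a\},Z)=\emptyset$ and the non-empty inter-bag pairs are exactly the edges of $H_2$ (the $4$-cycle $V_1V_2V_3V_4$ with pendant $V_5$ at $V_4$); since $|E_G(V_1,V_2)|=|E_G(V_2,V_3)|=1$, Lemma~\ref{prop:contractible:C4+} yields $G\notin\G$. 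In either case this contradicts $G\in\G$.

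The only point requiring care — and it is routine — is checking that these are genuine $H_i$-contractions, i.e. that no unintended edge appears between two bags. This is precisely where the degree-$2$ assumption on $u,u'$ is used (to force $E_G(\{u\},Z)=E_G(\{u'\},Z)=\emptyset$) and where bipartiteness is used (to force $a\not\sim b$ and $u\not\sim u'$); everything else is bookkeeping against the hypotheses of Lemmas~\ref{prop:contractible:C4+} and~\ref{prop:contractible:K23}.
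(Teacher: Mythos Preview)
Your proof is correct and follows essentially the same route as the paper: the same parity observation for (i), the same choice of $y_i\in D_i\cap Y$ with no common neighbour for (ii), and the same five-bag partition feeding into Lemmas~\ref{prop:contractible:C4+} and~\ref{prop:contractible:K23} for (iii). Your remark that (ii) really uses only bipartiteness and $\diam(G)\le 3$ is accurate; the paper invokes the extra hypotheses only to record, via Corollary~\ref{cor:PropThree:closed_neighborhood}, that $G-N_G[v]$ is in fact disconnected, which is more than the stated conclusion requires.
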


\begin{proof} Since $\diam(G)\le 3$, it is trivial to see that (i) holds.  Suppose that $G\in \G$, $\diam(G)\le 3$, $\delta(G)\ge 2$ and
let $D_0$, $D_1$, \ldots, $D_{m-1}$ be the components in $G-N_G[v]$  for a vertex $v$ with $\deg_G(v)\ge 3$.
By Corollary~\ref{cor:PropThree:closed_neighborhood}, $G-N_G[v]$ is disconnected and so $m\ge 2$.
Without loss of generality, we assume that $|D_0|\ge |D_1|\ge \cdots|D_{m-1}|$.
To show (ii), it is equivalent to show that $|D_1|=1$.
Suppose that $|D_1|\ge 2$.
Without loss of generality, let $v\in X$.
Then for each $i\in\{0,1\}$, $D_i\cap Y\neq\emptyset$ and take  a vertex $y_i\in D_i\cap Y$.
Note that $y_0$ and $y_1$ cannot have a common neighbor, a contradiction to (i).  Hence, $|D_i|=1$ for each $i\in\{1,\ldots,m-1\}$.

To show (iii),  suppose that there are two vertices of degree two, say $v_1$ and $v_3$, which have the same neighborhood. Let $N_G(v_1)=N_G(v_3)=\{v_2,v_4\}$.
Let $V_i=\{v_i\}$ for each $i\in\{1,2,3,4\}$, and let $V_5=V(G)\setminus (V_1\cup V_2\cup V_3\cup V_4)$.
Since $|V(G)|\ge 5$, $V_5\neq\emptyset$.
By contracting $V_j$'s, $G$ is either $H_2$-contractible or $H_3$-contractible for the graphs $H_2$ and  $H_3$ in Figure~\ref{fig:h1h2h3}.
If $G$ is $H_2$-contractible, then clearly $|E_G(V_1,V_2)|=|E_G(V_2,V_3)|=1$, and so $G\not\in\G$ by  Lemma~\ref{prop:contractible:C4+}, which is a contradiction.
If $G$ is $H_3$-contractible, then clearly $|V_1|=|V_2|=1$, $V_3$ is connected, and
the vertex $v_4\in V_4$ satisfies    $E_G(\{v_4\},V_3)\neq\emptyset$ and $E_G(\{v_4\},V_5)=E_G(V_4,V_5)$.
Then  $G\not\in\G$ by  Lemma~\ref{prop:contractible:K23}, which is a contradiction.
\end{proof}

\begin{proof}[Proof of the `only if' part of Theorem~\ref{main:thm:bipartite:CS}]
Suppose that there is a connected bipartite  graph $G\in \G$, none of the graphs described in (I)-(V). We take such $G$ so that $|V(G)|$ is as small as possible.
Then $G$ is neither a cycle nor a double star.
By Theorem~\ref{main:thm:triangle-free}, $G$ has diameter at most three.
Suppose that $\diam(G)\le 2$.
Then $G$ is a complete bipartite graph $G=K_{m,n}$ where $m\le n$.
If $m=1$ then $G$ is a star, a contradiction.
Suppose that $m\ge 2$. If $m\neq n$ then by Lemma~\ref{prop:contractible:Kmn}, $G\not\in \G$, a contradiction.
Thus $m=n$. If $m=2$, then $G=C_4$, a contradiction.
If $m\ge3$, then  $G=D(m-1,0;0,0)$ (a graph described in (V)), a contradiction.

Now suppose that $\diam(G)=3$.
Let $G=(X,Y)$. If either $|X|=1$ or $|Y|=1$ or $|X|=|Y|=2$, then $\diam(G)\le 2$, a contradiction.
Suppose that $|X|=2$ and $|Y|\ge 3$.
Then each vertex in $Y$ has degree at most two.
By Lemma~\ref{lem:diam3_bipartite} (iii), there is exactly one vertex $y\in Y$ of degree two and the other vertices in $Y$ are pendant.
By Lemma~\ref{lem:diam3_bipartite} (i), such all pendant vertices in $Y$ have the same neighbor in $X$, and so $G$ is a double star (a graph described in (II)) a contradiction.
The case where $|X|\ge 3$ and $|Y|=2$ is excluded  similarly. Hence, in the following, we assume that $|X|,|Y|\ge 3$.

\begin{clm}\label{claim:delta}
It holds that $\delta(G)\ge 2$ and moreover, each partite set has a vertex of degree at least three.
\end{clm}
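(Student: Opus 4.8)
The plan is to prove the two assertions of Claim~\ref{claim:delta} separately: minimality of $G$ handles $\delta(G)\ge 2$, and a direct ``subdivision'' argument handles the degree-three statement.

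\emph{First, $\delta(G)\ge 2$.} Suppose $v$ is a vertex of degree one, with neighbour $u$. Since $G\in\G$, the remark following Proposition~\ref{prop:pendant} forces $G-v\in\G$: otherwise $G$, obtained from $G-v$ by adding a pendant edge at $u$, would not be in $\G$. Now $G-v$ is a connected bipartite graph on fewer vertices than $G$, so by the minimality in the choice of $G$ it must be one of the graphs (I)--(V) of Theorem~\ref{main:thm:bipartite:CS}. It then suffices to check, for each such $G-v$ and each vertex $u\in V(G-v)$, that attaching a pendant at $u$ yields a graph that is again in the list (contradicting $G\notin$ (I)--(V)) or a graph not in $\G$ (contradicting $G\in\G$). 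This is a finite case analysis: if $G-v$ is a double star then $G$ is a tree of diameter at most four, hence a double star or, by Corollary~\ref{cor:tree}, not in $\G$; if $G-v$ is an even cycle, a book graph, $K_{3,3}$ minus an edge, or a $D(m,n;p,q)$ or $D^*(m,n;p,q)$, then either $u$ is one of the distinguished vertices at which pendants are allowed in these families (so $G$ again belongs to the list, with $p$ or $q$ increased by one), or attaching a pendant at $u$ raises the diameter to at least four or produces a contraction pattern as in Lemma~\ref{prop:contractible:C4+} or Lemma~\ref{prop:contractible:Kmn}, whence $G\notin\G$ by that lemma or by Theorem~\ref{main:thm:triangle-free}. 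In every case we reach a contradiction, so $\delta(G)\ge 2$.

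\emph{Second, each partite set has a vertex of degree at least three.} Write $G=(X,Y)$ and suppose to the contrary that every vertex of $X$ has degree exactly two; recall every vertex of $G$ already has degree at least two. As $|V(G)|=|X|+|Y|\ge 6$, Lemma~\ref{lem:diam3_bipartite}(iii) shows that distinct vertices of $X$ have distinct neighbourhoods. Hence, letting $H$ be the graph with $V(H)=Y$ and $E(H)=\{\,N_G(x):x\in X\,\}$, the graph $G$ is exactly the graph obtained from $H$ by subdividing every edge once, and $H$ is connected because $G$ is. For any $y,y'\in Y$ we then have $\dist_G(y,y')=2\,\dist_H(y,y')$, so $\diam(G)\le 3$ forces $\dist_H(y,y')\le 1$; thus $H=K_{|Y|}$ with $|Y|\ge 3$. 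If $|Y|\ge 4$ then $H$ has two disjoint edges, and the two corresponding vertices of $X$ lie at distance at least four in $G$, contradicting $\diam(G)\le 3$. Hence $|Y|=3$, so $H=K_3$ and $G$ is the subdivision of $K_3$, that is $G=C_6$ --- contradicting that $G$ is not a cycle. Interchanging the roles of $X$ and $Y$ rules out the symmetric possibility, which proves Claim~\ref{claim:delta}.

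\emph{The main obstacle} is the first part: one must make the pendant-deletion/re-attachment case check airtight, which means pinning down, inside the $D(m,n;p,q)$ and $D^*(m,n;p,q)$ families, exactly which vertices can carry an extra pendant while staying in $\G$, and invoking the lemmas of Section~\ref{sec:contraction} for every remaining attachment point. The second part is comparatively routine once $G$ is recognised as a subdivision (one new vertex per edge) of a graph on the smaller side.
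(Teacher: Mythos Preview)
Your approach matches the paper's in both parts. For $\delta(G)\ge 2$ the paper does exactly the pendant-deletion-and-minimality case analysis you outline (invoking Proposition~\ref{prop:pendant} to get $G-x\in\G$, then running through (I)--(V) with Lemmas~\ref{prop:contractible:C4+}, \ref{prop:contractible:Kmn} and Proposition~\ref{prop:case:(5)}); your sketch is accurate, and the paper supplies precisely the details you flag as the ``main obstacle.'' For the degree-three statement, your subdivision reformulation is a clean repackaging of the paper's argument: the paper works directly with Lemma~\ref{lem:diam3_bipartite}(i),(iii) to pin down $Y=\{y_1,y_2,y_3\}$ and then force $|X|=3$ and $G=C_6$, which is exactly what recognising $G$ as the one-subdivision of $H=K_{|Y|}$ and excluding $|Y|\ge 4$ accomplishes.
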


\begin{proof}
Suppose that there is a vertex $x$ of $G$ such that $\deg_G(x)=1$. Without loss of generality, let $x\in X$.
By Proposition~\ref{prop:pendant},
$G-x\in \G$. Clearly, $G-x$ is a bipartite graph.
By minimality of $|V(G)|$, $G-x$ is one of graphs described in (I)-(V).
In addition, $2\le \mathrm{diam}(G-x) \le 3$.

\begin{itemize}
\item Suppose that $G-x$ is a cycle (a graph described in (I)).
Then $G$ is a cycle of length at least four plus a pendant vertex, which is
$H_2$-contractible for the graph $H_2$ in Figure~\ref{fig:h1h2h3}.
By Lemma~\ref{prop:contractible:C4+}, $G\not\in\G$, which is a contradiction.

\item Suppose that $G-x$ is a double star (a graph described in (II)).
Since $G$ has diameter at most three, $G$ is also a double star, a contradiction.

\item Suppose that $G-x$ is a book graph $B_n$ ($n\ge 2$) (a graph described in (III)). Let $x^*\in X$ and  $y^*\in Y$  be the universal vertices of $G-x$.
Since $n\ge 2$, we can take a 4-cycle $v_1v_2x^*y^*$ of $G-x$ such that each of $v_1$ and $v_2$ is not adjacent to $x$.
Let $V_i=\{v_i\}$ for $i\in\{1,2\}$, $V_3=\{x^*\}$, $V_5=\{x\}$, and $V_4=V(G)\setminus(V_1\cup V_2\cup V_3\cup V_5)$. By contracting $V_j$'s, $G$ is $H_2$-contractible for the graph $H_2$ in Figure~\ref{fig:h1h2h3} and $|E_G(V_1,V_2)|=|E_G(V_2,V_3)|=1$. Then $G\not\in \G$ by Lemma~\ref{prop:contractible:C4+}, which is a contradiction.

\item  Suppose that  $G-x$ is the graph $D(1,1;0,0)$ (a graph described in (IV)).
To have $\mathrm{diam}(G)=3$, we must have $D(1,1;0,1)$, which implies that $G\not\in \G$ by {Proposition~\ref{prop:case:(5)}.}

\item
Suppose that $G-x$ is either $D(m,n;p,q)$ or $D^*(m,n;p,q)$ ($m\ge n$) with $m\ge 2$, $n\neq 1$, and $p,q\ge 0$ (a  graph described in (V)).
Together with Lemma~\ref{lem:diam3_bipartite}~(i), to have $\mathrm{diam}(G)=3$, $G$ must be a graph $D(m,n;p',q')$ or $D^*(m,n;p',q')$ for some $p',q'\ge 0$ (a graph
described in (V)), a contradiction.
\end{itemize}
Hence, $G$ has no pendant vertex in $X$ and so $\delta(G)\ge 2$.
To show  the `moreover' part, suppose that all vertices of a partite set, say $X$, have degree 2.
By Lemma~\ref{lem:diam3_bipartite}~(i) and (iii), for two vertices $x_1$ and $x_2$ in $X$, we may let $N_G(x_1)=\{y_1,y_2\}$ and $N_G(x_2)=\{y_2,y_3\}$.
By Lemma~\ref{lem:diam3_bipartite}~(i), $y_1$ and $y_3$ have  a common neighbor in $X$, say $x_3$. Then $N_G(x_3)=\{y_1,y_3\}$ by our assumption.
Moreover, Lemma~\ref{lem:diam3_bipartite}~(i) implies that any two vertices of $X$ have a common neighbor in $Y$, which implies that the neighborhood of a vertex in $X$ is equal to one of $\{y_1,y_2\}$, $\{y_2,y_3\}$, and $\{y_1,y_3\}$.
Since $G$ is connected, $Y=\{y_1,y_2,y_3\}$.
If $|X|=3$, then $G$ is a cycle $x_1,y_1,x_3,y_3,x_2,y_2$ of length six, a contradiction.
If $|X|\ge 4$, then there is a vertex $x_4\in X\setminus\{x_1,x_2,x_3\}$ such that $N_G(x_4)=N_G(x_i)$ for some $i\in\{1,2,3\}$, a contradiction to  Lemma~\ref{lem:diam3_bipartite}~(iii).
Hence, each partite set has a vertex of degree at least three.
\end{proof}

By Claim~\ref{claim:delta}, we can take a vertex $x^*\in X$ with $\deg_G(x^*)\ge 3$ so that
\begin{itemize}
\item[(i)] the order of a largest component of $G-N_G[x^*]$ is as large as possible,
\item[(ii)] the number of components in $G-N_G[x^*]$ is as small as possible, subject to the condition (i),
\item[(iii)] the degree of $x^*$ is as small as possible, subject to the conditions (i) and (ii).
\end{itemize}
Let $D_0$, $D_1$, \ldots, $D_{m-1}$ be the components in $G-N_G[x^*]$.
By Corollary~\ref{cor:PropThree:closed_neighborhood}, $m\ge 2$.

\begin{clm}\label{claim:degree2}
If $D_i=\{x_i\}$ for some $i\in \{0,1,\ldots,m-1\}$, then either $\deg_G(x_i)=2$ or  $N_G(x_i)=N_G(x^*)$.
\end{clm}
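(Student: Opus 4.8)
The plan is to argue by contradiction. Suppose $D_i=\{x_i\}$ but $\deg_G(x_i)\ge 3$ and $N_G(x_i)\ne N_G(x^*)$; I will show that $x_i$ would have been a strictly better choice than $x^*$ with respect to the extremal conditions~(i)--(iii) used to pick $x^*$. First I would record the basic structure. Since $D_i=\{x_i\}$ is an entire component of $G-N_G[x^*]$ and $\delta(G)\ge 2$ (Claim~\ref{claim:delta}), the vertex $x_i$ must lie in $X$: if $x_i\in Y$ then $N_G(x_i)\subseteq X\cap N_G[x^*]=\{x^*\}$, forcing $\deg_G(x_i)=1$, a contradiction; and since $x_i$ has no neighbour outside $N_G[x^*]$, we get $N_G(x_i)\subseteq N_G[x^*]\cap Y=N_G(x^*)$. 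Together with the hypothesis $N_G(x_i)\ne N_G(x^*)$ this gives $N_G(x_i)\subsetneq N_G(x^*)$, so I may fix a vertex $y^*\in N_G(x^*)\setminus N_G(x_i)$.

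Next I would compare $G-N_G[x_i]$ with $G-N_G[x^*]$. Since $N_G[x_i]=\{x_i\}\cup N_G(x_i)$ and $N_G(x_i)\subseteq N_G(x^*)$, one computes $V(G)\setminus N_G[x_i]=\bigl(\bigcup_{j\ne i}D_j\bigr)\cup\{x^*\}\cup\bigl(N_G(x^*)\setminus N_G(x_i)\bigr)$. Call a component $D_j$ with $j\ne i$ \emph{good} if it has a neighbour in $N_G(x^*)\setminus N_G(x_i)$, and \emph{bad} otherwise. I would then check that the components of $G-N_G[x_i]$ are exactly: a single ``core'' component $Z:=\{x^*\}\cup(N_G(x^*)\setminus N_G(x_i))\cup\bigcup\{\text{good }D_j\}$, together with each bad $D_j$ (as its own component). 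Indeed $x^*$ is adjacent to all of $N_G(x^*)\setminus N_G(x_i)$; each good $D_j$ attaches to $Z$ through its neighbour in $N_G(x^*)\setminus N_G(x_i)$; and a bad $D_j$ has no edge to $x^*$ (its neighbours in $Y$ avoid $N_G(x^*)$), none to $N_G(x^*)\setminus N_G(x_i)$, and none to any other $D_{j'}$. From this I extract two inequalities. First, $D_i$ is trivially bad in this sense, and if every $D_j$ with $j\ne i$ were also bad, then each $y\in N_G(x^*)\setminus N_G(x_i)$ would have $x^*$ as its only neighbour, contradicting $\delta(G)\ge 2$; hence some $D_j$ with $j\ne i$ is good, so $G-N_G[x_i]$ has at most $1+(m-2)=m-1$ components, strictly fewer than the $m$ components of $G-N_G[x^*]$. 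Second, writing $L$ for the order of a largest component of $G-N_G[x^*]$ and $D_0$ for such a component: if $D_0$ is bad it survives intact in $G-N_G[x_i]$; if $D_0$ is good then $Z\supseteq D_0\cup\{x^*\}$ has order $>L$; and if $D_0=D_i$ then $L=1$ while $|Z|\ge 2$. In every case $G-N_G[x_i]$ has a component of order at least $L$.

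Finally I would conclude. The vertex $x_i$ is an admissible competitor for $x^*$ in the selection process, since $x_i\in X$ and $\deg_G(x_i)\ge 3$. If $G-N_G[x_i]$ has a component of order $>L$, this violates the maximality condition~(i) imposed on $x^*$; otherwise that largest order equals $L$, but $G-N_G[x_i]$ has at most $m-1<m$ components, violating condition~(ii). Either way we contradict the choice of $x^*$, which proves the claim. I expect the main obstacle to be pinning down the component structure of $G-N_G[x_i]$ precisely --- in particular justifying that every bad $D_j$ really remains an isolated component --- and handling the degenerate cases ($L=1$, $m=2$, or $D_0=D_i$) so that the ``largest component has order $\ge L$'' statement holds without exception; the strict containment $N_G(x_i)\subsetneq N_G(x^*)$ together with $\delta(G)\ge 2$ is exactly what makes the extremal comparison go through.
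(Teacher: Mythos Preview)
Your proof is correct and follows the same extremal-comparison strategy as the paper: assume $\deg_G(x_i)\ge 3$ and $N_G(x_i)\subsetneq N_G(x^*)$, then argue that $x_i$ would have been a better choice than $x^*$. The one notable difference is that the paper only asserts the number of components of $G-N_G[x_i]$ is at most $m$ (not $m-1$) and then invokes condition~(iii) via $\deg_G(x_i)<\deg_G(x^*)$ for the final contradiction, whereas you use $\delta(G)\ge 2$ to force at least one good $D_j$, obtain the sharper bound $m-1$, and conclude from conditions~(i)--(ii) alone; your version is more detailed and in fact shows that condition~(iii) is not needed here.
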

\begin{proof}
Suppose that $D_i=\{x_i\}$ for some $i\in \{0,1,\ldots,m-1\}$, $\deg_G(x_i)\ge 3$ and $N_G(x_i) \neq N_G(x^*)$.
Then $N_G(x_i) \subsetneq N_G(x^*)$ and so $\deg_G(x_i)<\deg_G(x^*)$.
Note that the maximum order of a component of $G-N_G[x_i]$ is not less than that of $G-N_G[x^*]$, the number of components in $G-N_G[x_i]$ is at most $m$.
This contradicts the choice of $\deg_G(x^*)$, a contradiction. Thus the claim holds.
\end{proof}

Without loss of generality, we assume that $|D_0|\ge |D_1|\ge \cdots \ge |D_{m-1}|$.
By Lemma~\ref{lem:diam3_bipartite} (ii), $|D_i|=1$ for each $i\in \{1,\ldots,m-1\}$, and let
 $D_i=\{x_i\}$.
 In addition, it is clear that $N_G(x_i) \subseteq N_G(x^*)$ for each $i\in \{1,\ldots,m-1\}$.
Now we divide the proof into two cases according to the order of $D_0$.

\bigskip

\noindent\textbf{(Case 1)} Suppose that $|D_0|\ge 2$.
We will reach a contraction, by showing that $G$ is either $D(m,n;0,0)$ or $D^*(m,n;0,0)$ (a graph described in (V)) where $n=|D_0\cap Y|\ge 2$.
Since $D_0$ induces a connected graph, $D_0\cap Y\neq \emptyset$ and so the following claim holds.

\begin{clm}\label{claim:no-degree-two}
There is a vertex in $ D_0\cap Y$ of degree at least three.
\end{clm}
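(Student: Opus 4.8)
The plan is to argue by contradiction: suppose every vertex in $D_0\cap Y$ has degree exactly two (we already know $\delta(G)\ge 2$ by Claim~\ref{claim:delta}, so ``degree at most two'' means ``degree exactly two''). The point is that each such vertex $y\in D_0\cap Y$ has $N_G(y)\subseteq N_G(x^*)$ — indeed any neighbor of $y$ lies in $N_G(x^*)\cup D_0$, but a neighbor inside $D_0$ would be a vertex of $X$, and then $x^*$ and that vertex would be two vertices of $X$ at distance four (via $x^* \to$ some neighbor $\to y \to$ that vertex is length three, but more carefully one checks the distance from $x^*$ to such a vertex forces $\diam(G)\ge 4$ unless the neighbor set collapses) — so in fact I would show directly that $N_G(y)$ consists of two vertices of $N_G(x^*)$. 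Thus each $y\in D_0\cap Y$ has its (two-element) neighborhood contained in $N_G(x^*)$.

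Next I would look at $X\cap D_0$. Since $D_0$ is connected with at least two vertices and $D_0\cap Y\ne\emptyset$, if $D_0\cap X=\emptyset$ then $D_0\subseteq Y$ is a single vertex (as it is connected and bipartite), contradicting $|D_0|\ge 2$; so there is a vertex $x'\in D_0\cap X$. Now $x'$ has all its neighbors in $D_0\cap Y$ (it cannot reach $N_G(x^*)$ directly, being in a component of $G-N_G[x^*]$), hence $\deg_G(x')\le |D_0\cap Y|$, and moreover every neighbor of $x'$ has degree two. The plan is then to use Lemma~\ref{lem:diam3_bipartite}~(iii): if two distinct vertices $y_1,y_2\in D_0\cap Y$ both have degree two and the same neighborhood we get a contradiction (since $|V(G)|\ge 5$), so the degree-two vertices of $D_0\cap Y$ have pairwise distinct two-element neighborhoods, all inside $N_G(x^*)$. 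Combined with Lemma~\ref{lem:diam3_bipartite}~(i) (any two vertices of $Y$ share a common neighbor, in particular two vertices of $D_0\cap Y$ share a neighbor, which must then lie in $N_G(x^*)$), this rigidly constrains $D_0\cap Y$: I expect to conclude that $|D_0\cap Y|\le 2$ or that the $x'$-side forces $x'$ to have a neighbor of degree $\ge 3$ after all. The cleanest route: take $x'\in D_0\cap X$ of maximum degree in $G[D_0]$; if every neighbor of $x'$ had degree two, then — because those neighbors' second endpoints lie in $N_G(x^*)$ and any two of them share a common neighbor by (i) — one shows $x'$ together with $x^*$ yields a copy of the forbidden contracted structure (apply Corollary~\ref{cor:PropThree:closed_neighborhood} or Lemma~\ref{prop:contractible:Kmn} to the vertex $x'$ whose neighborhood is independent, of size $\ge 2$, each neighbor of degree $\ge 2$, with $G-N_G[x']$ connected because $N_G(x^*)$ glues everything together), giving $G\notin\G$, a contradiction.

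The main obstacle I anticipate is controlling the interaction between $D_0\cap X$ and $D_0\cap Y$ when $|D_0\cap X|\ge 2$: one must rule out the possibility that $G[D_0]$ is itself a large structure all of whose $Y$-vertices have degree two (e.g. a long even path or a cycle), using only $\diam(G)\le 3$ and the degree/neighborhood lemmas. I expect this to succeed because a $Y$-vertex of $D_0$ deep inside such a structure would be at distance $\ge 4$ from $x^*$, violating $\diam(G)=3$; making that distance count rigorous — pinning down exactly which vertices of $N_G(x^*)$ a given $D_0$-vertex can reach, and in how many steps — is the delicate bookkeeping. Once that is in place, the contradiction with one of Corollary~\ref{cor:PropThree:closed_neighborhood}, Lemma~\ref{prop:contractible:Kmn}, or Lemma~\ref{lem:diam3_bipartite}~(iii) is immediate, so $D_0\cap Y$ must contain a vertex of degree at least three.
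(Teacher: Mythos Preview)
Your proposal rests on a bipartite bookkeeping error that unravels the whole argument. You assert that every degree-two vertex $y\in D_0\cap Y$ has $N_G(y)\subseteq N_G(x^*)$. But $x^*\in X$, so $N_G(x^*)\subseteq Y$, while $N_G(y)\subseteq X$; in a bipartite graph these sets are disjoint, so the containment you want is impossible. The correct observation is the opposite one: since $y\in D_0$ is not adjacent to $x^*$ and any other $X$-neighbour of $y$ lies in $V(G)\setminus N_G[x^*]$ and hence in the same component $D_0$, we have $N_G(y)\subseteq D_0\cap X$. The same confusion reappears when you say that a vertex $x'\in D_0\cap X$ ``has all its neighbors in $D_0\cap Y$''. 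This is false too: $x'$ may (and by Lemma~\ref{lem:diam3_bipartite}\,(i) applied to $x^*$ and $x'$, must) have a neighbour in $N_G(x^*)$. Because of this, your plan to show that degree-two $Y$-vertices of $D_0$ have pairwise distinct two-element neighbourhoods inside $N_G(x^*)$, and then squeeze a contradiction out of Lemma~\ref{lem:diam3_bipartite}\,(iii), cannot get started.

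Once the sides are straightened out, the paper's argument is short and different in shape from yours. Take any $y\in D_0\cap Y$ of degree two and write $N_G(y)=\{x'_1,x'_2\}\subseteq D_0$. If $G-N_G[y]$ is connected, then so is $G-(N_G[y]\cup\{z\})$ for some singleton $z\in\{x_1,\dots,x_{m-1}\}$, and setting $V_1=\{x'_1\}$, $V_2=\{y\}$, $V_3=\{x'_2\}$, $V_5=\{z\}$, $V_4$ the rest, one gets an $H_2$-contraction with $|E_G(V_1,V_2)|=|E_G(V_2,V_3)|=1$, so Lemma~\ref{prop:contractible:C4+} gives $G\notin\G$. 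If $G-N_G[y]$ is disconnected, then by $\diam(G)\le 3$ every component other than the one containing $x^*$ is a singleton in $D_0\cap Y$ with neighbourhood exactly $\{x'_1,x'_2\}$; contracting the large component yields $K_{2,\,|D_0\cap Y|+1}$ and Lemma~\ref{prop:contractible:Kmn} gives $G\notin\G$. Either way we contradict $G\in\G$, so $D_0\cap Y$ contains a vertex of degree at least three.
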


\begin{proof}
Suppose that a vertex $y$ in $D_0\cap Y$ has degree two. Let $N_G(y)=\{x'_1,x'_2\}$.
Note that $N_G(y)\subset V(D_0)$ by the definition of $D_i$'s.
Suppose that $G-N_G[y]$ is connected.
Then $G-(N_G[y]\cup \{z\})$ is also connected for some $z\in\{x_1,\ldots,x_{m-1}\}$.
Then let $V_1=\{x'_1\}$, $V_2=\{y\}$, $V_3=\{x'_2\}$, and $V_5=\{z\}$, and let $V_4=V(G) \setminus (V_1\cup V_2\cup V_3 \cup V_5)$. Note that $G[V_4]=G-(N_G[y]\cup \{z\})$ is connected.
By contracting $V_j$'s, $G$ is $H_2$-contractible for the graph $H_2$ in Figure~\ref{fig:h1h2h3}. Note that $|E_G(V_1,V_2)|=|E_G(V_2,V_3)|=1$, and
thus $G\not\in\G$ by Lemma~\ref{prop:contractible:C4+}, which is a contradiction.
Thus $G-N_G[y]$ is disconnected.
To have $\diam(G)\le 3$, every component of $G-N_G[y]$, except the component containing $x^*$, must be a singleton in $D_0\cap Y$. Moreover, as we have $\delta(G)\ge 2$, each such singleton component has the neighborhood $\{x'_1,x'_2\}$.
Then by contracting the largest component of $G-N_G[y]$ into one vertex, we obtain a complete bipartite graph $K_{2,|D_0\cap Y|+1}$. By Lemma~\ref{prop:contractible:Kmn}, $G\not\in \G$, a contradiction.
Hence, $D_0\cap Y$ has a vertex of degree at least three.
\end{proof}

\begin{figure}[h!]\centering
\includegraphics[width=8cm,page=11]{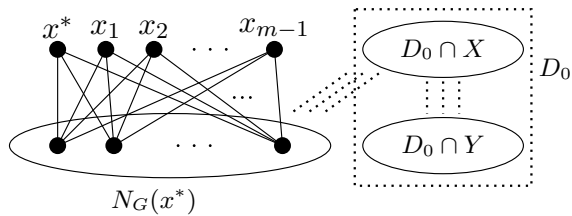}\\
\caption{An illustration for (Case 1) of the proof of Theorem~\ref{main:thm:bipartite:CS}.}\label{fig:proof:main:Case1}
    \end{figure}

\begin{clm}\label{claim:Observation_Case(i)} For each $i\in\{1,\ldots,m-1\}$, $N_G(x_i)=N_G(x^*)$. (See Figure~\ref{fig:proof:main:Case1} for an illustration.)
\end{clm}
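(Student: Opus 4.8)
The plan is to argue by contradiction. Suppose some $x_i$ with $1\le i\le m-1$ has $N_G(x_i)\ne N_G(x^*)$. Since $N_G(x_i)\subseteq N_G(x^*)$ and $\deg_G(x^*)\ge 3$, Claim~\ref{claim:degree2} forces $\deg_G(x_i)=2$, so it suffices to rule out $\deg_G(x_i)=2$; write $N_G(x_i)=\{y_1,y_2\}$ and fix $y_3\in N_G(x^*)\setminus\{y_1,y_2\}$. First I would record the structure forced by $\diam(G)\le 3$. By Claim~\ref{claim:no-degree-two} there is $y_0\in D_0\cap Y$ with $\deg_G(y_0)\ge 3$, so $N_G(y_0)\subseteq D_0\cap X$ and $|D_0\cap X|\ge 3$. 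Applying Lemma~\ref{lem:diam3_bipartite}(i) to $y_0$ and any $y\in N_G(x^*)$ gives a common neighbour in $N_G(y_0)\subseteq D_0\cap X$; hence every vertex of $N_G(x^*)$ — in particular $y_1,y_2,y_3$ — has a neighbour in $D_0$, whence $\deg_G(y_1)\ge 3$ and $\deg_G(y_2)\ge 3$ (the neighbours $x^*$, $x_i$ and a vertex of $D_0$ are distinct). Applying Lemma~\ref{lem:diam3_bipartite}(i) to $x_i$ and any $x'\in X\setminus\{x_i\}$ gives a common neighbour in $\{y_1,y_2\}$, so $N_G(y_1)\cup N_G(y_2)=X$; in particular every vertex of $D_0\cap X$ is adjacent to $y_1$ or to $y_2$. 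Finally, since $|V(G)|=|X|+|Y|\ge 6\ge 5$, Lemma~\ref{lem:diam3_bipartite}(iii) shows $x_i$ is the only vertex of degree $2$ with neighbourhood $\{y_1,y_2\}$; combining this with Claim~\ref{claim:degree2} (each $x_j$ with $j\ne i$ either has degree $2$ or has $N_G(x_j)=N_G(x^*)$), $\delta(G)\ge 2$, and the fact that every $y\in N_G(x^*)$ meets $D_0$, one checks that $H:=G-\{x_i,y_1,y_2\}$ is connected.

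With this in hand the target is an $H_2$-contraction. I would take $V_1=\{y_1\}$, $V_2=\{x_i\}$, $V_3=\{y_2\}$ (so $|E_G(V_1,V_2)|=|E_G(V_2,V_3)|=1$ and $V_2$ is connected), and look for a partition $V(H)=V_4\sqcup V_5$ with $V_5$ nonempty, $V_5\cap(N_G(y_1)\cup N_G(y_2))=\emptyset$, and $V_4$ connected. Then $x^*\in V_4$ (since $x^*\sim y_1$), and because $x^*$ is adjacent to both $y_1$ and $y_2$ one checks that the bag-adjacencies are exactly those of $H_2$; so $G$ is $H_2$-contractible and Lemma~\ref{prop:contractible:C4+} contradicts $G\in\G$. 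Thus everything comes down to finding such a $V_5$ inside the ``allowed'' set $V(H)\setminus(N_G(y_1)\cup N_G(y_2))$, which contains all of $Y\cap V(H)$ together with the degree-two vertices among $x_1,\dots,x_{m-1}$ that avoid $\{y_1,y_2\}$.

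This last step is the main obstacle. The difficulty is structural: $x^*$ and $x_i$ have all their neighbours in $Y$, so removing the ``extra'' vertices of $N_G(x^*)$ threatens to isolate $x^*$ in $V_4$, and the allowed set excludes precisely those $x_j$ that would be cut off when one deletes a single $y_\ell\in N_G(x^*)\setminus\{y_1,y_2\}$. I would resolve it by a case analysis on $\deg_G(x^*)$ (and, within it, on the neighbourhoods of the degree-two $x_j$'s, which are pairwise distinct by Lemma~\ref{lem:diam3_bipartite}(iii)): when $\deg_G(x^*)$ is large one can choose a single $y_\ell$ for $V_5$ that avoids all these degree-two $x_j$'s and keeps $V_4$ connected through $x^*$, through $D_0$ (which meets every vertex of $N_G(x^*)$), and through the remaining $x_j$'s; in the residual cases — essentially $\deg_G(x^*)=3$, where $G-N_G[x^*]=D_0\sqcup\{x_1,\dots,x_{m-1}\}$ and $D_0$ is adjacent to all of $\{y_1,y_2,y_3\}$ — I would instead contract $D_0$ to a single vertex and invoke the $K_{2,d}$-contraction of Corollary~\ref{cor:PropThree:closed_neighborhood} (or Lemma~\ref{prop:contractible:Kmn}) via the $K_{2,3}$ formed by $x^*$, the contracted $D_0$ and $\{y_1,y_2,y_3\}$, or an $H_3$-contraction from Lemma~\ref{prop:contractible:K23}, playing $\deg_G(y_1)\ge 3$ and the richness of $D_0$ against the maximality used to pick $x^*$. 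Making these residual contractions rigorous — in particular checking that no extra bag-adjacencies appear — is where the bulk of the work lies, and it is exactly here that the hypothesis $|D_0|\ge 2$ of Case~1 (hence $|D_0\cap X|\ge 3$) is used.
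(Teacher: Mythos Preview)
Your setup is correct and you arrive at exactly the right $H_2$-contraction scheme: $V_1=\{y_1\}$, $V_2=\{x_i\}$, $V_3=\{y_2\}$. The gap is in your choice of $V_5$. You look for $V_5$ among $N_G(x^*)\setminus\{y_1,y_2\}$ or among the other $x_j$'s, and this is precisely why you run into the obstacle you describe: removing a vertex of $N_G(x^*)$ may strand $x^*$ or some degree-two $x_j$ in $V_4$, forcing the open-ended case analysis you sketch (which, as you admit, is not carried out).

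The missing idea is to take $V_5$ from inside $D_0$, not from $N_G(x^*)$ or from $\{x_1,\dots,x_{m-1}\}$. You already invoked Claim~\ref{claim:no-degree-two} to get $y_0\in D_0\cap Y$ with $\deg_G(y_0)\ge 3$, but you only used it to bound $|D_0\cap X|$. Use it again: by Corollary~\ref{cor:PropThree:closed_neighborhood}, $G-N_G[y_0]$ is disconnected, and by Lemma~\ref{lem:diam3_bipartite}(ii) every component other than the one containing $V(G)\setminus D_0$ is a singleton $\{y'\}\subset D_0\cap Y$. Then $N_G(y')\subset N_G(y_0)\subset D_0\cap X$, so $y'$ is automatically non-adjacent to $y_1$ and $y_2$; and since every neighbour of $y'$ is also a neighbour of $y_0$, deleting $y'$ leaves $D_0$ (hence your $H=G-\{x_i,y_1,y_2\}$) connected. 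Setting $V_5=\{y'\}$ and $V_4=V(G)\setminus(V_1\cup V_2\cup V_3\cup V_5)$ finishes the $H_2$-contraction in one stroke via Lemma~\ref{prop:contractible:C4+}, with no case analysis at all. This is what the paper does, and it replaces your entire ``main obstacle'' paragraph.
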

\begin{proof} Suppose that for some $i\in\{1,\ldots,m-1\}$, $N_G(x_i)\neq N_G(x^*)$.
By Claim~\ref{claim:degree2}, $\deg_G(x_i)=2$.
By Claim~\ref{claim:no-degree-two}, we can take a vertex $y\in D_0\cap Y$ of degree at least three.
Then $G-N_G[y]$ is disconnected by Corollary~\ref{cor:PropThree:closed_neighborhood}.
Note that the component containing $V(G)\setminus D_0$ of $G-N_G[y]$ is a largest component and each of the other components of $G-N_G[y]$ is a singleton, which is a vertex in $D_0\cap Y$, by Lemma~\ref{lem:diam3_bipartite}~(ii).
Take a singleton $\{y'\}$, which is a component of $G-N_G[y]$.
Then $N_G(y')\subset N_G(y)$, and so $V(G)\setminus (N_G[x_i]\cup\{y'\})$ is connected.
Let $V_1=\{y_1\}$, $V_2=\{x_i\}$, $V_3=\{y_2\}$ where $N_G(x_i)=\{y_1,y_2\}$.
We let $V_5=\{y'\}$, and let $V_4=V(G)\setminus (V_5\cup V_1\cup V_2\cup V_3)$.
Then $G$ is $H_2$-contractible for the graph $H_2$ in Figure~\ref{fig:h1h2h3}.
By Lemma~\ref{prop:contractible:C4+}, $G\not\in\G$, a contradiction.
Hence, the claim holds.
\end{proof}

By Lemma~\ref{lem:diam3_bipartite}~(i) and Claim~\ref{claim:Observation_Case(i)},  $G$ is contractible to $K_{m+1,|N_G(x^*)|}$ with exactly one bag $D_0$ of order at least two. Hence, by Lemma~\ref{prop:contractible:Kmn}, since $G\in \mathcal{G}^c$,  it follows that $(3\le)m+1=|N_G(x^*)|$.

Now we take  a vertex  $y^*\in Y\cap D_0$ of degree at least three,
so that a largest component of $G-N_G[y^*]$ is maximum, the number of components in $G-N_G[y^*]$ is minimum, and we select one with minimum degree among such vertices.
Then by the same argument with $y^*$ instead of $x^*$ in Claims~\ref{claim:degree2} -~\ref{claim:Observation_Case(i)}, we can show that $N_G(y^*)=N_G(y')$ for all $y'\in D_0\cap Y$, and then by applying Lemma~\ref{prop:contractible:Kmn} again, we obtain $|D_0\cap Y|+1=|N_G(y^*)|$.
In addition, if $(D_0\cap X)\setminus N_G(y^*)\neq \emptyset$, then $D_0$ cannot be a connected graph, a contradiction. Hence, $N_G(y^*)=D_0\cap X$ and so $|D_0\cap Y|+1=|D_0\cap X|(\ge 3)$.
Therefore, we note that Claim~\ref{claim:last} is enough to finish the proof of (Case 1).
\begin{clm}\label{claim:last}
$G[N_G(x^*)\cup (D_0\cap X)]$ is either a complete bipartite graph or a double star.
\end{clm}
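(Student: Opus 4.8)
The plan is to show that the bipartite graph $B := G[N_G(x^*)\cup(D_0\cap X)]$ cannot simultaneously have a ``large'' forbidden configuration on each side. Recall from the setup that $|N_G(x^*)| = m+1$, $|D_0\cap X| = |D_0\cap Y|+1 \geq 3$, that $N_G(y^*) = D_0\cap X$ for every degree-$\geq 3$ vertex $y^*\in D_0\cap Y$ (and in fact $N_G(y)=D_0\cap X$ for all $y\in D_0\cap Y$ by the $y^*$-analogue of Claim~\ref{claim:Observation_Case(i)}), so $B$ is connected. Note $x^*$ is a universal vertex of $B$ on the $N_G(x^*)$-side (it is adjacent to every vertex of $D_0\cap X$ via $N_G(x^*)$? — more precisely, every vertex of $D_0\cap Y$ is adjacent to all of $D_0\cap X$, and $x^*$ plays the role of a vertex outside, so one should instead observe that $B$ has a spanning complete-bipartite-like core): the correct structural fact to extract first is that in $B$, every vertex of $D_0\cap Y$ dominates $D_0\cap X$, and symmetrically there is a vertex of $N_G(x^*)$ dominating all of $D_0\cap X$ (namely a common neighbor guaranteed by Lemma~\ref{lem:diam3_bipartite}(i)). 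So $B$ already contains one dominating edge $xy$ with $x\in D_0\cap X$, $y\in N_G(x^*)$.

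First I would argue that $B$ has at most one non-universal vertex on the $N_G(x^*)$-side \emph{or} at most one non-universal vertex on the $(D_0\cap X)$-side. Suppose not: take non-universal $y_1,y_2\in N_G(x^*)$ and non-universal $x_1,x_2\in D_0\cap X$. Since $y_i$ is non-universal there is $x_i'\in (D_0\cap X)\setminus N_G(y_i)$, and since $x_i$ is non-universal there is $y_i'\in N_G(x^*)\setminus N_G(x_i)$; combined with the dominating edge $xy$ of $B$ and the fact (Claim~\ref{claim:Observation_Case(i)} and its $y^*$-analogue) that everything outside $B$ hangs only off $D_0\cap X$ on one side and off $N_G(x^*)$ on the other, one produces a partition of $V(G)$ witnessing that $G$ is $H_3$-contractible — bags roughly $V_1=\{x_1\}$-type singleton, $V_2$ a singleton of the dominating vertex, $V_3$ the connected remainder attached through $D_0\cap X$, $V_4$ a connected piece, $V_5$ a leftover non-universal vertex — satisfying the hypotheses $|V_1|=|V_2|=1$, $V_3$ connected, and the $v_4$-condition of Lemma~\ref{prop:contractible:K23}. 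That lemma then gives $G\notin\G$, a contradiction. Hence, say, at most one vertex of $N_G(x^*)$ is non-universal in $B$.

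Now split on the $(D_0\cap X)$-side. If every vertex of $D_0\cap X$ is universal in $B$ too (or all but possibly affecting only the single non-universal $N_G(x^*)$-vertex), then $B$ is obtained from the complete bipartite graph $K_{|D_0\cap X|,\,m+1}$ by deleting a (possibly empty) set of edges all incident to a single vertex, which is exactly a double star when that vertex drops to degree one, and a complete bipartite graph otherwise; either way Claim~\ref{claim:last} holds. If instead $D_0\cap X$ also has a non-universal vertex, then by the first paragraph's dichotomy we are in the case ``at most one non-universal on each side'': let $y_0$ be the unique non-universal vertex of $N_G(x^*)$ and $x_0$ the unique one of $D_0\cap X$. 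Using the dominating edge and Lemma~\ref{lem:diam3_bipartite}(i) (common neighbors), one checks $N_B(y_0)$ omits only $x_0$ and $N_B(x_0)$ omits only $y_0$, so $B = K_{|D_0\cap X|,\,m+1} - x_0y_0$; but this graph contains an induced $K_{2,3}$-minus-an-edge or a $4$-cycle-through-degree-two-vertices configuration unless both sides have size $\le 2$, and the minor/contraction lemmas (Lemma~\ref{prop:contractible:C4+} with the needed $|E_G(V_1,V_2)|=|E_G(V_2,V_3)|=1$, applied to $G$ using a singleton from $\{x_1,\dots,x_{m-1}\}$ as $V_5$) again force $G\notin\G$, a contradiction — so this sub-case does not occur. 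I expect the main obstacle to be the bookkeeping of the second paragraph: verifying that the candidate bags $V_1,\dots,V_5$ built from a pair of non-universal vertices genuinely satisfy \emph{all} the technical side-conditions of Lemma~\ref{prop:contractible:K23} (in particular that $V_3$, the ``everything else'' bag, is connected and that the distinguished vertex $v_4$ has its neighbourhood into $V_5$ concentrated as required), which will need the diameter-$3$ hypothesis and the fact that outside $D_0$ all structure attaches through $N_G(x^*)$.
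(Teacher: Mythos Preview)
Your approach has a genuine gap: the dichotomy you assert in the second paragraph --- that $B$ has at most one non-universal vertex on the $N_G(x^*)$-side or at most one on the $(D_0\cap X)$-side --- is simply false in the very case the claim is trying to capture. When $B$ is a double star with dominating edge $x_0y_0$ (with $x_0\in D_0\cap X$, $y_0\in N_G(x^*)$), every vertex of $N_G(x^*)\setminus\{y_0\}$ is adjacent in $B$ only to $x_0$, and every vertex of $(D_0\cap X)\setminus\{x_0\}$ is adjacent in $B$ only to $y_0$. Since $|N_G(x^*)|=m+1\ge 3$ and $|D_0\cap X|\ge 3$, there are at least two non-universal vertices on \emph{each} side. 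Consequently, whatever $H_3$-contraction you have in mind in that paragraph cannot actually yield $G\notin\G$ in general, since $D^*(m,n;0,0)\in\G$ realises exactly this configuration. The third paragraph then inherits the error: the conclusion $B=K_{|D_0\cap X|,\,m+1}-x_0y_0$ is not the double-star structure at all, and your attempt to rule it out by contraction is aiming at the wrong target.

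The paper's argument proceeds quite differently and does not try to bound the number of non-universal vertices. It fixes a non-universal $y\in N_G(x^*)$ of minimum degree, lets $A=\{y'\in N_G(x^*):N_G(y')=N_G(y)\}$ be its twin class, and contracts the (connected) set $Z=V(G)\setminus(N_G[y]\cup A)$ to a single vertex. The result is a $K_{|A|+1,\,|N_G(y)|}$ with one non-singleton bag, so Lemma~\ref{prop:contractible:Kmn} forces $|A|+1=|N_G(y)|$; a short count then gives $|A|=m$, $|N_G(y)|=m+1$, and pins down a single vertex $x_0\in D_0\cap X$ as the unique neighbour of every $y'\in A$ inside $D_0$. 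The remaining vertex $y_0\in N_G(x^*)\setminus A$ is forced by $\mathrm{diam}(G)=3$ to dominate $(D_0\cap X)\setminus\{x_0\}$, and Corollary~\ref{cor:PropThree:closed_neighborhood} applied at $x_0$ forces $x_0y_0\in E(G)$, yielding the double star. The key lemma here is the $K_{m,n}$-contraction Lemma~\ref{prop:contractible:Kmn}, not the $H_3$ lemma you invoke.
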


\begin{proof}
Note that, by Lemma~\ref{lem:diam3_bipartite} (i), each vertex in $N_G(x^*)$ has a neighbor in $D_0\cap X$ by considering a vertex in $N_G(x^*)$ and a vertex in $D_0\cap Y$.
By the same reason, by considering the vertex $x^*$ and a vertex in $D_0\cap X$, it holds that each vertex in $D_0\cap X$ has a neighbor in $N_G(x^*)$.

Suppose that $G[N_G(x^*)\cup (D_0\cap X)]$ is not a complete bipartite graph.
Then there is a  vertex $y\in N_G(x^*)$ such that $N_G(y)\cap D_0$ is not equal to $D_0\cap X$.
We take such $y$ with minimum degree. Let $A$ be the set of vertices in $N_G(x^*)$ that have the same neighborhood as $y$, that is, $A=\{y'\in N_G(x^*)\mid N_G(y')=N_G(y)\}$.
If $A=N_G(x^*)$, then for $G$ being connected, $N_G(y)=X$, which is a contradiction to the fact that $y$ is not universal.
Thus $A$ is a proper subset of $N_G(x^*)$ and so $|A| \le |N_G(x^*)|-1=m$.
Let $Z=V(G)\setminus (N_G[y]\cup A)$, and then let $H=G/Z$, which is the graph obtained from $G$ by contracting $Z$ into a one vertex.
Note that by the choice of $y$, every vertex in $N_G(x^*)\setminus A$ has a neighbor in $Z$ and therefore $G[Z]$ is a connected graph and $|Z|\ge 2$.
Then $H$ is a complete bipartite graph with partite sets of order $|A|+1$ and $|N_G(y)|$.
Since $G\in \G$, it follows that $|A|+1=|N_G(y)|$ from Lemma~\ref{prop:contractible:Kmn}.
Then \[m+1 \ge |A|+1=|N_G(y)| \ge m + |N_G(y)\setminus \{x^*,x_1,\ldots,x_{m-1}\}|\ge m+1,\] which implies that  $|A|=m$ and $|N_G(y)|=m+1$.
Therefore, each vertex in $A$ has exactly one neighbor $x_0$ in $N_G(y)\setminus \{x^*,x_1,\ldots,x_{m-1}\}$. Note that $\deg_G(x_0)=|N_G(x_0)\cap D_0|+|A|\ge 1+m \ge 3$.
Since $|N_G(x^*)|=m+1$, there is a unique vertex $y_0$ in $N_G(x)\setminus A$.
Then, since $\diam(G)=3$, all vertices in $(D_0\cap X) \setminus \{x_0\}$  must be adjacent to $y_0$. If $x_0y_0\not\in E(G)$, then $G-N_G[x_0]$ is connected, a contradiction to Corollary~\ref{cor:PropThree:closed_neighborhood}.
Thus $x_0y_0$ is an edge, and therefore $G[N_G(x^*)\cup (D_0\cap X)]$ is a double star.
\end{proof}

\bigskip

\noindent\textbf{(Case 2)} Suppose that $|D_0|=1$. Let $D_0=\{x_0\}$. Then $x^*$ is a universal vertex and by Claim~\ref{claim:degree2}, for every $x'\in X$, either $\deg_G(x')=2$ or $N_G(x')=N_G(x^*)=Y$.
If there is a non-universal vertex $y\in Y$ with degree at least three, then $G-N_G[y]$ has a component of order at least two, which can be shown by the same argument as in (Case 1).
Thus, we may assume that for every $y\in Y$, either $\deg_G(y)=2$ or $N_G(y)=X$.
Then, by Claim~\ref{claim:delta}, it follows that there is a universal vertex in $Y$.

If there is no degree two vertex in $X$ (or $Y$), then $G$ is isomorphic to $K_{n,n}$, and so $\diam(G)=2$, a contradiction.
Suppose that each of $X$ and $Y$ has a vertex with degree two.
Thus each part has at most two universal vertices.
We will show that each partite set has exactly one universal vertex.
Suppose that one of the partite sets, say $X$, has exactly two universal vertices $x_1$ and $x_2$.
If there are two non-universal vertices $y_1$ and $y_2$ in $Y$,  then $N_G(y_1)=N_G(y_2)=\{x_1,x_2\}$, which is a contradiction by Lemma~\ref{lem:diam3_bipartite}~(iii). Thus $Y$ has exactly one non-universal vertex, say $y_1$.
Then $\deg_G(y_1)=2$ and the vertices of $Y$ other than $y_1$ are universal vertices, and therefore $|Y|=3$.
If $|X|=3$, then $G$ is the graph $K_{3,3}$ minus an edge (a graph described in (IV)), a contradiction.
Thus $|X|\ge 4$.
Let $V_1=\{x_1\}$, $V_2=\{y_1\}$, $V_3=\{x_2\}$.
By taking a vertex $x_3\in X \setminus\{x_1,x_2\}$, let $V_5=\{x_3\}$ and $V_4=V(G)\setminus (V_1\cup V_2\cup V_3\cup V_5)$. Note that $V_4$ is connected, since the vertices in $Y\setminus \{y_1\}$, which are universal vertices, are in $V_4$ and  $V_4\cap X\neq\emptyset$. By contracting $V_j$'s,
$G$ is $H_2$-contractible for the graph $H_2$ in Figure~\ref{fig:h1h2h3}.
Then $G\not\in \G$ by Lemma~\ref{prop:contractible:C4+}, which is a contradiction.
Therefore, each partite set has exactly one universal vertex and all the other vertices have degree $2$, which implies that  $G$ is a book graph (a graph described in (III)), a contradiction. We have completed the proof.
\end{proof}

\section*{Acknowledgement}
The authors thank the referees for their valuable comments.
Shinya Fujita was supported by JSPS KAKENHI (No.~19K03603). Boram Park was upported by Basic Science Research Program through the National Research Foundation of Korea (NRF) funded by the Ministry of  Science, ICT \& Future Planning (NRF-2018R1C1B6003577).
Tadashi Sakuma was supported by JSPS
KAKENHI (No.~26400185, No.~16K05260 and No.~18K03388).

\end{document}